\numberwithin{equation}{section}
\newtheorem{theorem}{Theorem}[section]
\newtheorem{corollary}[theorem]{Corollary}
\newtheorem{example}[theorem]{Example}
\newtheorem{lemma}[theorem]{Lemma}
\newtheorem{proposition}[theorem]{Proposition}
\newtheorem*{theorem*}{Theorem}
\title[Rigidity of BSDH variety for  $PSp(2n, \mathbb C)$]{Rigidity of Bott-Samelson-Demazure-Hansen variety for  $PSp(2n, \mathbb C)$}                                     %<-------------------
\author[B.N.Chary and  S.S.Kannan]{B. Narasimha Chary and S. Senthamarai Kannan}                
\address{%
B. Narasimha Chary\\
Chennai Mathematical Institute, \\
Plot H1, SIPCOT IT Park, Siruseri, \\
Kelambakkam, 603103, India.\\
chary@cmi.ac.in.}
\address{%
S. Senthamarai Kannan\\
Chennai Mathematical Institute, \\
Plot H1, SIPCOT IT Park, Siruseri, \\
Kelambakkam, 603103, India.\\
kannan@cmi.ac.in.}
\keywords{Bott-Samelson-Demazure-Hansen variety, Coxeter element and Tangent Bundle.}        
\subjclass[2010]{14F17, 14M15}   
\begin{document}

\maketitle

\begin{abstract}
Let $G=PSp(2n, \mathbb C)(n\geq 3)$ and $B$ be a Borel subgroup of $G$ containing a maximal torus $T$ of $G$. Let $w$ be an element 
of the Weyl group $W$ and $X(w)$ be the Schubert variety in the flag variety $G/B$
corresponding to $w$. Let $Z(w,\underline i)$ be the Bott-Samelson-Demazure-Hansen 
variety (the desingularization of $X(w)$) corresponding to a reduced expression $\underline i$ of $w$. 

In this article, we study the cohomology groups of the tangent bundle on $Z(w_0, \underline i)$, where $w_0$ is the 
longest element of the Weyl group $W$. We describe  all the
 reduced expressions  $\underline i$ of $w_0$ in terms of a Coxeter element 
 such that all the higher cohomology groups  of the tangent bundle on $Z(w_0, \underline i)$ vanish (see Theorem \ref{Theorem}).  
\end{abstract}

\section{Introduction}
Let $G$ be a simple algebraic group of adjoint type over  the field $\mathbb{C}$ of 
complex numbers. We fix a maximal torus $T$ of $G$ and 
let $W = N_G(T)/T$ denote the Weyl group of $G$ with respect to $T$.
We denote by $R$ the set of roots of $G$ with respect to $T$ and by
 $R^{+}$ a set of positive roots. Let $B^{+}$ be the Borel 
subgroup of $G$ containing $T$ with respect to $R^{+}$. 
Let 
$w_0$ denotes the longest element of the Weyl group $W$. Let $B$ be the 
Borel subgroup of $G$ opposite to $B^+$ determined by $T$, i.e. $B=n_{w_0}B^+n_{w_0}^{-1}$, where $n_{w_0}$ is a representative of $w_0$ in $N_G(T)$.
Note that the 
roots of 
$B$ is the set $R^{-} :=-R^{+}$ of negative roots. We use the notation $\beta<0$ for 
$\beta \in R^{-}$.
Let $S = \{\alpha_1,\ldots,\alpha_n\}$ 
denote the set of all simple roots in $R^{+}$, where $n$ is the rank of $G$. 
The simple reflection in the Weyl group corresponding to a simple root $\alpha$ is denoted
by $s_{\alpha}$. For simplicity of notation, the simple reflection corresponding to a simple root $\alpha_{i}$ is denoted
by $s_{i}$. 

For $w \in W$, let $X(w)(:=\overline{BwB/B})$ denote the Schubert variety in the flag variety
$G/B$ corresponding to $w$. Note that in general Schubert varieties are not smooth. 

Given a reduced expression 
$w=s_{i_1}s_{i_2}\cdots s_{i_r}$ of $w$, with 
the corresponding tuple $\underline i:=(i_1,\ldots,i_r)$, we denote by 
$Z(w,{\underline i})$ the desingularization  of the Schubert variety $X(w)$, 
which is now known as Bott-Samelson-Demazure-Hansen (for short BSDH) variety. 

Demazure in  \cite{Dem1} and 
Hansen in \cite{Hansen} independently constructed these desingularizations of Schubert varieties 
using the idea from \cite{Bott-Sam}.
In this paper we prove that two BSDH-varieties $Z(w,\underline i)$ and $Z(w,\underline j)$ are isomorphic if $\underline i$ and $\underline j$
 differ only by commuting relations (see Theorem \ref{com}).

In \cite{Bott1}, it is proved that all the higher cohomology groups 
$H^{i}(G/B, T_{G/B})$ for the tangent bundle   
$T_{G/B}$ on  $G/B$ vanish. 
In \cite{Ka4}, it is proved  that the higher
cohomology groups  of the
 restriction of  $T_{G/B}$ to $X(w)$ vanish (see \cite[Theorem 4.1 and  Theorem 6.5]{Ka4}).
  
In \cite{CKP}, we  
proved the following vanishing results of the tangent bundle $T_{Z(w, \underline i)}$ on $Z(w, \underline i)$
(see \cite[Section 3]{CKP});\\
(1) $H^j(Z(w, \underline i), T_{Z(w, \underline i)})=0$ for all $j\geq 2$.\\
(2) If $G$ is simply laced, then $H^j(Z(w, \underline i), T_{Z(w, \underline i)})=0$ for all
  $j\geq 1$.

As a consequence, it follows that the BSDH-varieties are rigid 
for simply laced groups and their deformations are  unobstructed in general 
(see \cite[Section 3]{CKP}). 
 The above vanishing result is independent of the choice of the reduced expression $\underline i$ of $w$.
 While computing the first cohomology group $H^1(Z(w, \underline i),  T_{Z(w, \underline i)})$ for non simply laced group,
 we observed that this cohomology group very much depend on the choice of a reduced expression $\underline i$ of $w$.

It is a natural question to ask  that for which reduced expressions $\underline i$ of $w$, 
the cohomology group $H^1(Z(w, \underline i), T_{Z(w, \underline i)})$  does vanish ?
In this article, we give a partial answer to this question for $w=w_0$ when $G=PSp(2n, \mathbb C)$. 

Recall that a Coxeter element is an element of the Weyl group 
having a reduced expression of the form $s_{i_{1}}s_{i_{2}} \cdots s_{i_{n}}$
such that $i_{j}\neq i_{l}$ whenever $j\neq l$ (see \cite[p. 56, Section 4.4]{Hum0}).
Note that for any Coxeter element $c$, there is a decreasing sequence of integers $n\geq a_1> a_2 > \ldots > a_k=1$
 such that $c=\prod\limits_{j=1}^{k}[a_j, a_{j-1}-1]$, where $a_0:=n+1$, $[i, j]:=s_{i}s_{i+1}\cdots s_{j}$ for $i\leq j$.

{\it  Throughout this paper, for simplicity, we denote the product $w_{1}\cdots w_{k}$ of elements in 
$W$ by $\prod\limits_{j=1}^{k}w_{j}$.   For instance, we use the product $c=\prod\limits_{j=1}^{k}[a_j, a_{j-1}-1]$ as above to denote  $c=[a_1, a_{0}-1] [a_{2}, a_{1}-1]\cdots [a_{k} , a_{k-1}-1]$. }

In this paper we prove the following theorem. 
 \begin{theorem*}
 
Let $G=PSp(2n, \mathbb C)$ $(n\geq 3)$ and let $c\in W$ be a Coxeter element.
Let $\underline i=(\underline i^1, \underline i^2,\ldots, \underline i^n)$ be a sequence corresponding 
to a reduced expression of $w_0$, where
$\underline i^r$ $(1\leq r\leq n)$ is a sequence of reduced expressions of $c$ (see Lemma \ref{coxeter}). 
  Then,  
  $H^j(Z(w_0, \underline i), T_{(w_0, \underline i)})=0$ for all $j\geq 1$ if and only if $c=\prod\limits_{j=1}^{k}[a_j, a_{j-1}-1]$, where $a_0:=n+1$
and $a_j\neq n-1$ for every $j=1,2,\ldots, k$.
\end{theorem*}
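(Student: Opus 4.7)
The plan is to separate the problem by cohomological degree. The cited result from \cite{CKP} already gives $H^{j}(Z(w_{0},\underline i),T_{Z(w_{0},\underline i)})=0$ for all $j\geq 2$ regardless of the choice of $\underline i$, so what remains is to characterise the reduced expressions for which $H^{1}$ vanishes. I would exploit the block decomposition $\underline i=(\underline i^{1},\ldots,\underline i^{n})$ by peeling off the last Coxeter block, setting $u=c^{n-1}$ and $\underline i'=(\underline i^{1},\ldots,\underline i^{n-1})$, and considering the forgetful projection $\pi:Z(w_{0},\underline i)\to Z(u,\underline i')$. By Theorem \ref{com} each factor $\underline i^{r}$ may be replaced by the canonical form $\prod_{j=1}^{k}[a_{j},a_{j-1}-1]$ of $c$ without altering the isomorphism class of $Z(w_{0},\underline i)$.

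The projection $\pi$ realises $Z(w_{0},\underline i)$ as an iterated $\mathbb{P}^{1}$-bundle of relative dimension $n$ over $Z(u,\underline i')$. The relative tangent bundle short exact sequence
\begin{equation*}
0\longrightarrow T_{\pi}\longrightarrow T_{Z(w_{0},\underline i)}\longrightarrow \pi^{*}T_{Z(u,\underline i')}\longrightarrow 0
\end{equation*}
together with the Leray spectral sequence for $\pi$ and the projection formula gives an exact sequence relating $H^{1}(Z(w_{0},\underline i),T_{Z(w_{0},\underline i)})$ to $H^{0}$ and $H^{1}$ on the base of the direct images $R^{q}\pi_{*}T_{\pi}$, and to $H^{1}(Z(u,\underline i'),T_{Z(u,\underline i')})$. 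The direct images can be computed fibrewise using Borel--Weil--Bott on the Coxeter-factor fibre, giving explicit expressions in terms of Demazure operators applied to the character lattice of $T$. Induction on the number of blocks then reduces the whole question to a finite sequence of Demazure-operator computations of line bundle cohomology on the partial products.

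The matching of the combinatorics $a_{j}\neq n-1$ with the vanishing is forced by the fact that $G$ is of type $C_{n}$: the only pair of simple reflections obeying an order-$4$ braid relation is $(s_{n-1},s_{n})$, and the associated short--long root asymmetry is precisely what allows a non-zero Demazure kernel to appear in the recursive step. Sufficiency would follow by showing that when every $a_{j}\neq n-1$, the positioning of $s_{n-1}$ relative to $s_{n}$ within the canonical form of $c$ forces each intermediate Demazure computation to vanish, so the recursion kills every $H^{1}$-contribution. Necessity would be established by exhibiting, whenever some $a_{j}=n-1$, an explicit Demazure class supported in the corresponding block which survives the Leray spectral sequence to produce a non-zero global element in $H^{1}$.

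The main obstacle is the finite but intricate case analysis required to verify that the only surviving obstruction among all Demazure-operator computations is the one tied to the pair $(s_{n-1},s_{n})$ at the prescribed position. Since at each of the $n$ successive blocks the characters appearing in the relative tangent bundle are transported by a product of Demazure operators, one must check carefully that no other simple-reflection pair $(s_{i},s_{i+1})$ with $i<n-1$ can contribute a surviving class, and that the explicit class constructed in the necessity direction is never killed when pushed forward through all earlier blocks.
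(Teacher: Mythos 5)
Your proposal is a strategy outline rather than a proof: the step you identify as ``the main obstacle'' --- verifying that the only surviving obstruction among all the recursive cohomology computations is the one tied to $(s_{n-1},s_n)$ --- is precisely where all of the content lies, and it is left entirely open. Two concrete points. First, peeling off a whole Coxeter block and computing $R^q\pi_*T_\pi$ ``fibrewise using Borel--Weil--Bott'' does not work as stated: the relative tangent bundle $T_\pi$ of the relative-dimension-$n$ tower is an iterated extension of the line bundles $\mathcal L(\cdot,\alpha_{i_j})$, not a homogeneous bundle on a flag variety, so Borel--Weil--Bott does not apply to it directly. The paper instead peels off one simple reflection at a time, where the relative tangent bundle of a single $\mathbb P^1$-fibration step is the line bundle $\mathcal L(w,\alpha_{i_r})$, and then controls the resulting long exact sequence (the LES of Section 7). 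Even there, the surjectivity of the connecting maps is not a local computation: it hinges on the global fact from \cite{CKP} that $H^0(Z(w_0,\underline i),T_{(w_0,\underline i)})$ is a parabolic subalgebra of $\mathfrak g$ with a unique $B$-stable line $\mathfrak{g}_{-\alpha_0}$, which is what pins down the dimensions of the relevant weight spaces as exactly $2$ in Lemma \ref{7.4} and Lemma \ref{7.5}, forcing each $H^1(w_r,\alpha_n)_{\mu}$ to be hit by the connecting map. Your purely fibrewise plan has no access to this global input.

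Second, for necessity you propose to exhibit a class ``supported in the corresponding block'' for an arbitrary $j$ with $a_j=n-1$ and push it through all earlier blocks. You are missing the elementary observation that since $n\geq a_1>a_2>\cdots>a_k=1$, the condition $a_j=n-1$ can only occur for $j=1$ or $j=2$; the paper exploits this to reduce necessity to two short explicit computations ($H^1(s_{n-1}s_n,\alpha_n)=\mathbb C_{\alpha_{n-1}+\alpha_n}$ in the case $a_1=n-1$, and $H^1(s_ns_{n-1}s_n,\alpha_n)=\mathbb C_{\alpha_{n-1}}$ in the case $a_2=n-1$) on subwords of length at most three, combined with the surjectivity of restriction to initial subwords (Lemma \ref{T}). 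Without that reduction, and without carrying out any of the weight-space computations of Sections 5 and 6 that identify exactly which $\mu$ support a nonzero $H^1(w_r,\alpha_n)_{\mu}$ and match them against $H^0(w_{r-1},\alpha_n)_{\mu}$ and $H^0(\tau_r,\alpha_{n-1})_{\mu}$, the proposal does not establish either direction of the equivalence.
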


By the above vanishing results, we conclude that if $G=PSp(2n, \mathbb C)$ $(n\geq 3)$ and $\underline i=(\underline i^1, \underline i^2,\ldots, \underline i^n)$
is a reduced expression of $w_0$ as above, then the BSDH-variety $Z(w_0, \underline i)$ is rigid.

The organization of the paper is as follows:
In Section 2, we recall some preliminaries on BSDH-varieties. In Section 3, we prove that two
BSDH-varieties $Z(w,\underline i)$ and $Z(w,\underline j)$ are isomorphic if $\underline i$ and $\underline j$
 differ only by commuting relations.
 We deal with the special case $G=PSp(2n, \mathbb C)$  $(n\geq 3)$ in sections 4, 5, 6, 7 and 8.
In Section 4, we write down explicit reduced expressions of $c^i (1\leq i \leq n)$ for each Coxeter element $c$. 
In Section 5 (respectively, Section 6) we compute the weight spaces of $H^0$ (respectively, $H^1$) of the relative tangent bundle of BSDH-varieties
associated to some elements of the Weyl group. 
In Section 7, we prove some results on cohomology modules of tangent bundle of BSDH varieties.
In Section 8, we prove the  main result using the results from the previous sections.

{\bf Acknowledgements.} We are very grateful to the referee for his/her comments which helps to improve the exposition of the paper..
We would like to thank A.J. Parameswaran for the useful discussions. We would like to thank the 
Infosys Foundation for the partial financial support.

\section{Preliminaries}
We refer to \cite{Bri}, \cite{Hart}, \cite{Hum1}, \cite{Hum2}, \cite{Jan} and \cite{Springer} for preliminaries in Algebraic geometry, Algebraic groups and Lie algebras.
For a simple root $\alpha\in S$, we denote by
$P_{\alpha}$ the minimal parabolic subgroup of $G$ containing $B$ and
$s_{\alpha}$.  
We recall that the BSDH-variety corresponds to a reduced expression $\underline i$ of
$w=s_{i_1}s_{i_2}\cdots s_{i_r}$  is defined by
$$
Z(w, \underline i) = \frac {P_{\alpha_{i_{1}}} \times P_{\alpha_{i_{2}}} \times \cdots \times 
P_{\alpha_{i_{r}}}}{B \times \cdots
\times B},
$$

where the action of $B \times \cdots \times B$ on $P_{\alpha_{i_{1}}} \times P_{\alpha_{i_{2}}}
\times \cdots \times P_{\alpha_{i_{r}}}$ is given by $(p_1, \ldots , p_r)(b_1, \ldots
, b_r) = (p_1 \cdot b_1, b_1^{-1} \cdot p_2 \cdot b_2, \ldots
,b^{-1}_{r-1} \cdot p_r \cdot b_r)$, $ p_j \in P_{\alpha_{i_{j}}}$, $b_j \in B$ and 
$\underline i=(i_1, i_2, \ldots, i_r)$
(see \cite[p.73, Definition 1]{Dem1}, \cite[p.64, Definition 2.2.1]{Bri}).

We note that for each reduced expression $\underline i$ of $w$, $Z(w, \underline i)$ is a 
smooth projective 
variety. We denote by $\phi_w$, the natural birational surjective morphism
from $ Z(w, \underline i)$ to $X(w)$.

Let $f_r : Z(w, \underline i) \longrightarrow Z(ws_{i_r},
\underline i')$ denote the map induced by the
projection $P_{\alpha_{i_1}} \times P_{\alpha_{i_2}} \times \cdots \times 
P_{\alpha_{i_r}} \longrightarrow P_{\alpha_{i_1}} \times P_{\alpha_{i_2}}
\times \cdots \times P_{\alpha_{i_{r-1}}}$, where $\underline i'=(i_1,i_2,\ldots, i_{r-1})$.
Then we observe that $f_r$ is a 
$P_{\alpha_{i_r}}/B \simeq {\mathbb P}^{1}$-fibration.

For a $B$-module $V$, let ${\mathcal L}(w,V)$ denote the restriction of the
associated  homogeneous  vector bundle on $G/B$ to $X(w)$.
By abuse of notation, we denote the pull back of ${\mathcal L}(w,V)$ via $\phi_w$ to 
$Z(w, \underline i)$ also by ${\mathcal L}(w,V)$, when there is no cause 
for confusion. 
Since for any $B$-module $V$, the vector bundle ${\mathcal L}(w,V)$
on $Z(w, \underline i)$ is the pull back of the homogeneous vector 
bundle from $X(w)$, we conclude that the cohomology modules
$$H^{j}(Z(w, \underline i) ,~{\mathcal L}(w,V))\cong H^{j}(X(w),
~{\mathcal L}(w,V))$$ for all $j\geq 0$ (see  \cite[Theorem 3.3.4 (b)]{Bri}), are independent
 of the choice of the reduced expression $\underline i$. Hence we denote
$H^{j}(Z(w, \underline i) ,~{\mathcal L}(w,V))$ by $H^j(w,V)$. In particular,
if $\lambda$ is a character of $B$, then we denote the cohomology modules 
$H^{j}(Z(w, \underline i) ,~{\mathcal L}_{\lambda} )$ by $H^j(w, \lambda)$. 

 We recall the following short exact sequences of $B$-modules from \cite{CKP}, we call it
{\it SES}. 
\begin{enumerate}
 \item  $H^0(w, V)\simeq H^0(s_{\gamma}, H^0(s_{\gamma}w, V))$.
 \item  $0 \to H^{1}(s_{\gamma}, H^{0}(s_{\gamma}w, V)) 
\to 
H^{1}(w, V) \to
H^{0}(s_{\gamma} , H^{1}(s_{\gamma}w, V) ) \to 0$.
\end{enumerate}
Let $\alpha$ be a simple root and $\lambda \in X(T)$ be such that 
$\langle \lambda , \alpha \rangle  \geq 0$. Let $\mathbb{C}_{\lambda}$ 
denote the one dimensional $B$-module associated to $\lambda$. Here, we recall the following 
result
 due to Demazure \cite[Page 1]{Dem} on a short exact sequence of $B$-modules:
\begin{lemma}\label{dem1} Let $\alpha$ be a simple root and $\lambda \in X(T)$ be such that 
$\langle \lambda , \alpha \rangle  \geq 0$.
Let $ev:H^0({s_\alpha, \lambda}) 
\longrightarrow \mathbb{C}_\lambda$  be the evaluation map. 
 Then we have
\begin{enumerate}
\item If $\langle \lambda,\alpha \rangle=0$, then $H^0({s_\alpha, \lambda})\simeq\mathbb{C}_\lambda$.
\item If  $\langle \lambda,\alpha\rangle \geq 1$, then $\mathbb C_{s_{\alpha}(\lambda)}\hookrightarrow H^0({s_\alpha, \lambda})$
and there is a short exact sequence of $B$-modules:

 $0\longrightarrow  H^0({s_\alpha,\lambda-\alpha})\longrightarrow H^0({s_\alpha, \lambda})/ \mathbb C_{s_{\alpha}(\lambda)}\overset{ev}{\longrightarrow} \mathbb C_{\lambda}\longrightarrow 0.$
Further more, $H^0({s_\alpha,\lambda-\alpha})=0$ when $\langle\lambda,\alpha \rangle=1$.
\item Let $n=\langle \lambda, \alpha \rangle$. 
As a $B$-module, $H^0(s_{\alpha}, \lambda)$ has a composition series
$$0\subsetneq V_n\subsetneq V_{n-1}\subsetneq \ldots \subsetneq V_0=H^0(s_{\alpha}, \lambda)$$ 
such that  $V_i/V_{i+1}\simeq \mathbb C_{\lambda -i\alpha}$
for $i=0,1, \ldots, n-1$ and  $V_n=\mathbb C_{s_{\alpha}(\lambda)}$.
\end{enumerate}
\end{lemma}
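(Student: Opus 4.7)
The plan is to reduce the entire statement to the rank-one case. Since $\mathcal{L}_\lambda$ is pulled back from $G/B$, we have $H^0(s_\alpha, \lambda) = H^0(P_\alpha/B, \mathcal{L}_\lambda)$, and the restriction of $\mathcal{L}_\lambda$ to $P_\alpha/B \cong \mathbb{P}^1$ has degree $n := \langle \lambda, \alpha \rangle$. By Borel--Weil on $\mathbb{P}^1$, this space has dimension $n+1$ under the hypothesis $n \geq 0$.

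Part (1) is then immediate: when $n = 0$ the weight $\lambda$ is $s_\alpha$-invariant and hence extends to a character of $P_\alpha$, so the one-dimensional module $H^0(s_\alpha, \lambda)$ must be $\mathbb{C}_\lambda$ as a $P_\alpha$-module, and in particular as a $B$-module. For parts (2) and (3), I would fix an affine chart about the $B$-fixed point $eB$ (the orbit of $eB$ under the root subgroup $U_\alpha$), with coordinate $x$ of $T$-weight $-\alpha$. A global section of $\mathcal{L}_\lambda$ restricts in this chart to a polynomial $f(x) = c_0 + c_1 x + \cdots + c_n x^n$, and the monomial $x^i$ represents a section of $T$-weight $\lambda - i\alpha$. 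Thus the $T$-weight decomposition of $H^0(s_\alpha, \lambda)$ consists of $\lambda, \lambda - \alpha, \ldots, \lambda - n\alpha = s_\alpha(\lambda)$, each with multiplicity one, and the line of lowest weight supplies the submodule $\mathbb{C}_{s_\alpha(\lambda)}$ claimed in (2).

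The composition series in part (3) is obtained by setting $V_i$ to be the subspace of sections vanishing to order $\geq i$ at $eB$; in the chart above this is spanned by $x^i, \ldots, x^n$. Crucially, $eB$ is a $B$-fixed point, so $B$ preserves the maximal ideal of the local ring at $eB$ together with all of its powers, and hence each $V_i$ is automatically a $B$-submodule. Each quotient $V_i/V_{i+1}$ is one-dimensional with $T$-weight $\lambda - i\alpha$, so the $B$-action on it factors through $T$, giving $V_i/V_{i+1} \simeq \mathbb{C}_{\lambda - i\alpha}$ and in particular $V_n \simeq \mathbb{C}_{s_\alpha(\lambda)}$, as required.

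Finally for the SES in (2), the evaluation map at $eB$ realizes $V_0/V_1 \simeq \mathbb{C}_\lambda$, so quotienting by the submodule $V_n \simeq \mathbb{C}_{s_\alpha(\lambda)}$ yields
$$0 \to V_1/V_n \to H^0(s_\alpha, \lambda)/\mathbb{C}_{s_\alpha(\lambda)} \overset{ev}{\to} \mathbb{C}_\lambda \to 0.$$
It remains to identify $V_1/V_n$ with $H^0(s_\alpha, \lambda - \alpha)$, which can be seen by comparing both as $B$-modules: both are $(n-1)$-dimensional with $T$-weights exactly $\lambda - i\alpha$ for $1 \leq i \leq n-1$ each of multiplicity one, and both carry a natural filtration by vanishing order at $eB$ with matching successive quotients $\mathbb{C}_{\lambda - i\alpha}$. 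The clause $H^0(s_\alpha, \lambda - \alpha) = 0$ when $n = 1$ is immediate: then $\langle \lambda - \alpha, \alpha \rangle = -1 < 0$, and line bundles of negative degree on $\mathbb{P}^1$ have no global sections. The main subtlety throughout is $B$-equivariance of these identifications, and it is handled uniformly by the observation that $eB$ is $B$-fixed, so vanishing orders at $eB$ are preserved by the full action of $B$, not merely of $T$.
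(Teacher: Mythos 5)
Your overall strategy is sound, and it is in fact the same in spirit as the source of this statement: the paper gives no proof of Lemma \ref{dem1} at all, citing it to Demazure's \emph{A very simple proof of Bott's theorem}, whose argument is exactly the explicit rank-one computation you set up. Your parts (1) and (3) are correct as written: the reduction to $P_\alpha/B\cong\mathbb{P}^1$ of degree $n=\langle\lambda,\alpha\rangle$, the weight count $\lambda,\lambda-\alpha,\dots,\lambda-n\alpha$ with multiplicity one in the chart $U_\alpha\cdot eB$, and the observation that the vanishing-order filtration $V_i$ at the $B$-fixed point $eB$ is automatically a filtration by $B$-submodules with $V_i/V_{i+1}\simeq\mathbb{C}_{\lambda-i\alpha}$ (a one-dimensional $B$-module is a character, determined by its $T$-weight). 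The gap is the final identification in (2): you assert $V_1/V_n\simeq H^0(s_\alpha,\lambda-\alpha)$ because both are $(n-1)$-dimensional with the same $T$-weights and carry filtrations with matching one-dimensional quotients. That inference is invalid: a $B$-module is not determined by its composition series, nor by possessing a filtration with prescribed graded pieces. For example, $\mathbb{C}_{\lambda-\alpha}\oplus\mathbb{C}_{\lambda-2\alpha}$ and the nonsplit extension of $\mathbb{C}_{\lambda-\alpha}$ by $\mathbb{C}_{\lambda-2\alpha}$ have identical weight data and identical filtration quotients. Since the entire point of this lemma is to pin down exact $B$-module structure (the paper later leans on precisely such extension data, e.g.\ the indecomposable $B_\alpha$-summands in Lemma \ref{lemma2.4} and throughout Section 5), this step needs a real argument, not a weight comparison.

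The repair is short. First, the unipotent radical $R_u(P_\alpha)$ is normal in $P_\alpha$, lies in $B$, and acts trivially on $P_\alpha/B$ and on the equivariant bundles $\mathcal{L}_\lambda$, $\mathcal{L}_{\lambda-\alpha}$; hence it acts trivially on $H^0(s_\alpha,\lambda)$, on $H^0(s_\alpha,\lambda-\alpha)$, and on all subquotients, so it suffices to produce a $B_\alpha$-isomorphism, i.e.\ to control $T$ and the single operator $e_{-\alpha}$, which shifts $T$-weights by exactly $-\alpha$. Both $H^0(s_\alpha,\lambda)$ and $H^0(s_\alpha,\lambda-\alpha)$ are irreducible over $\widetilde{L}_\alpha$, so $e_{-\alpha}$ maps each weight space isomorphically onto the next except that it kills the lowest one; this persists in the subquotient $V_1/V_n$ on the string $\lambda-\alpha,\dots,\lambda-(n-1)\alpha$. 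Thus each of the two modules is cyclic over the line of weight $\lambda-\alpha$, with basis $v, e_{-\alpha}v,\dots,e_{-\alpha}^{\,n-2}v$ and $e_{-\alpha}^{\,n-1}v=0$; sending generator to generator and $e_{-\alpha}^{\,j}v$ to $e_{-\alpha}^{\,j}v$ is then a $T$-equivariant isomorphism intertwining $e_{-\alpha}$, and in characteristic zero the $U_{-\alpha}$-action is recovered by exponentiation. (Alternatively, in the binary-form model where $H^0(s_\alpha,\lambda)$ is the space of degree-$n$ forms in $X,Y$ with $\mathbb{C}Y^n=\mathbb{C}_{s_\alpha(\lambda)}$ and $\ker(ev)=Y\cdot S^{n-1}$, the map $Yg\mapsto \partial g/\partial X$ is $U_{-\alpha}$-equivariant, kills exactly $\mathbb{C}Y^n$, and surjects onto the model of $H^0(s_\alpha,\lambda-\alpha)$ with the correct weights.) With this one step supplied, your proof is complete and is essentially Demazure's.
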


 We define the {\rm dot} action by $w\cdot\lambda=w(\lambda+\rho)-\rho$, where $\rho$ is the half sum of positive roots.
 As a consequence of the exact sequences of Lemma \ref{dem1}, we can prove 
the following.
\begin{lemma} \label{lemma2.2}
Let $w = \tau s_{\alpha}$, $l(w) = l(\tau)+1$. Then we have 
\begin{enumerate}
\item  If $\langle \lambda , \alpha \rangle \geq 0$, then 
$H^{j}(w , \lambda) = H^{j}(\tau, H^0({s_\alpha, \lambda}) )$ for all $j\geq 0$.
\item  If $\langle \lambda ,\alpha \rangle \geq 0$, then $H^{j}(w , \lambda ) =
H^{j+1}(w , s_{\alpha}\cdot \lambda)$ for all $j\geq 0$.
\item If $\langle \lambda , \alpha \rangle  \leq -2$, then $H^{j+1}(w , \lambda ) =
H^{j}(w ,s_{\alpha}\cdot \lambda)$ for all $j\geq 0$. 
\item If $\langle \lambda , \alpha \rangle  = -1$, then $H^{j}( w ,\lambda)$ 
vanishes for every $j\geq 0$.
\end{enumerate}
\end{lemma}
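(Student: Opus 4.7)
The plan is to realize all four assertions as consequences of the Leray spectral sequence for the $\mathbb{P}^1$-fibration $f_r:Z(w,\underline i)\to Z(\tau,\underline i')$ coming from a reduced expression whose last simple reflection is $s_\alpha$, combined with the computation of $H^q(s_\alpha,\lambda)$ on $P_\alpha/B\cong\mathbb{P}^1$ supplied by Lemma \ref{dem1}. Since the cohomology modules $H^j(w,\lambda)$ are independent of the reduced expression chosen, I would fix $\underline i=(i_1,\ldots,i_{r-1},i_r)$ of $w$ with $s_{i_r}=s_\alpha$ and set $\underline i'=(i_1,\ldots,i_{r-1})$, so that $f_r$ is a Zariski locally trivial $\mathbb{P}^1$-bundle.

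Base change along the fibre $P_\alpha/B$ gives
\[R^0f_{r*}\mathcal L(w,\lambda)=\mathcal L(\tau,H^0(s_\alpha,\lambda)),\qquad R^1f_{r*}\mathcal L(w,\lambda)=\mathcal L(\tau,H^1(s_\alpha,\lambda)),\]
and the higher direct images vanish since the fibres are one-dimensional. The Leray spectral sequence $E_2^{p,q}=H^p(\tau,H^q(s_\alpha,\lambda))\Rightarrow H^{p+q}(w,\lambda)$ thus degenerates into the short exact sequence
\[0\longrightarrow H^j(\tau,H^0(s_\alpha,\lambda))\longrightarrow H^j(w,\lambda)\longrightarrow H^{j-1}(\tau,H^1(s_\alpha,\lambda))\longrightarrow 0\]
for every $j\geq 0$ (with $H^{-1}:=0$). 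This is precisely the higher-$j$ analogue of the two sequences \emph{SES} already recalled, and it reduces the proof of the lemma to analyzing the three cases in $\langle\lambda,\alpha\rangle$.

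Part (1) is now immediate: when $\langle\lambda,\alpha\rangle\geq 0$, Lemma \ref{dem1} gives $H^1(s_\alpha,\lambda)=0$, so the sequence collapses to $H^j(w,\lambda)\cong H^j(\tau,H^0(s_\alpha,\lambda))$. Part (4) is equally easy: when $\langle\lambda,\alpha\rangle=-1$ the line bundle restricts to $\mathcal O_{\mathbb P^1}(-1)$ on each fibre, forcing $H^0(s_\alpha,\lambda)=H^1(s_\alpha,\lambda)=0$, and both outer terms vanish. For part (3), the hypothesis $\langle\lambda,\alpha\rangle\leq -2$ gives $H^0(s_\alpha,\lambda)=0$ together with the Serre-duality type identification $H^1(s_\alpha,\lambda)\cong H^0(s_\alpha,s_\alpha\cdot\lambda)$ of $B$-modules, where $s_\alpha\cdot\lambda=\lambda-(\langle\lambda,\alpha\rangle+1)\alpha$ now satisfies $\langle s_\alpha\cdot\lambda,\alpha\rangle=-\langle\lambda,\alpha\rangle-2\geq 0$. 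The exact sequence therefore reads $H^{j+1}(w,\lambda)\cong H^j(\tau,H^0(s_\alpha,s_\alpha\cdot\lambda))$, and a second invocation of part (1) identifies the right-hand side with $H^j(w,s_\alpha\cdot\lambda)$, giving the desired shift. Part (2) is a formal consequence of (3): setting $\mu=s_\alpha\cdot\lambda$, the assumption $\langle\lambda,\alpha\rangle\geq 0$ forces $\langle\mu,\alpha\rangle\leq -2$ and $s_\alpha\cdot\mu=\lambda$, and (3) applied at $\mu$ yields $H^{j+1}(w,\mu)\cong H^j(w,\lambda)$.

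The only genuinely technical point is the degeneration of the Leray spectral sequence together with the base-change identification of the sheaves $R^qf_{r*}\mathcal L(w,\lambda)$; this is standard for a locally trivial $\mathbb{P}^1$-bundle but is what lifts the $j=0,1$ statements of \emph{SES} to arbitrary $j$. Once it is in hand, all four parts are routine bookkeeping with Lemma \ref{dem1} and the involution identity $s_\alpha\cdot(s_\alpha\cdot\lambda)=\lambda$.
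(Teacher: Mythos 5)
Your proof is correct and is essentially the argument the paper intends: the lemma is stated there as a consequence of Lemma \ref{dem1} together with the \emph{SES} coming from the $\mathbb{P}^1$-fibration $f_r$, which is exactly your Leray/base-change mechanism, followed by the same case analysis on $\langle\lambda,\alpha\rangle$ (including the standard identification $H^1(s_\alpha,\lambda)\cong H^0(s_\alpha,s_\alpha\cdot\lambda)$ and the formal deduction of (2) from (3) via the involution $s_\alpha\cdot(s_\alpha\cdot\lambda)=\lambda$). The one point you single out as genuinely technical, degeneration of the two-row Leray spectral sequence, is in fact automatic for line bundles: the fibre degree of $\mathcal{L}_\lambda$ along $f_r$ is the constant $\langle\lambda,\alpha\rangle$, so by Lemma \ref{dem1} at least one of $R^0f_{r*}\mathcal{L}_\lambda$, $R^1f_{r*}\mathcal{L}_\lambda$ vanishes in every case, leaving a single nonzero row and making the possible $d_2$ differentials vanish trivially.
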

The following consequence of  Lemma \ref{lemma2.2} will be used 
to compute cohomology modules in this paper. Now onwards we will denote the Levi subgroup of $P_{\alpha}$  ( $\alpha\in S$ ) containing $T$ by $L_{\alpha}$  and the 
subgroup $L_{\alpha}\cap B$ by $B_{\alpha}$.  Let $\pi:\widetilde{G}\longrightarrow G$ be the universal cover.  Let $\widetilde L_{\alpha}$    ( respectively, $\widetilde B_{\alpha}$ )  be the inverse image of  $L_{\alpha}$    (  respectively, of
$B_{\alpha}$ ). 

\begin{lemma}\label{lemma2.3}
Let $V$ be an irreducible  $L_{\alpha}$-module. Let $\lambda$
be a character of $B_{\alpha}$. Then, we have 
\begin{enumerate}
\item If
$\langle \lambda , \alpha \rangle \geq 0$, then 
$H^{0}(L_{\alpha}/B_{\alpha} , V\otimes \mathbb{C}_{\lambda})$ 
is isomorphic as an $L_{\alpha}$-module to the tensor product of $V$ and 
$H^{0}(L_{\alpha}/B_{\alpha} , \mathbb{C}_{\lambda})$, and 
$H^{j}(L_{\alpha}/B_{\alpha} , V\otimes \mathbb{C}_{\lambda}) =0$ 
for every $j\geq 1$.
\item If
$\langle \lambda , \alpha \rangle  \leq -2$, 
$H^{0}(L_{\alpha}/B_{\alpha} , V\otimes \mathbb{C}_{\lambda})=0$, 
and $H^{1}(L_{\alpha}/B_{\alpha} , V\otimes \mathbb{C}_{\lambda})$
is isomorphic to the tensor product of  $V$ and $H^{0}(L_{\alpha}/B_{\alpha} , 
\mathbb{C}_{s_{\alpha}\cdot\lambda})$. 
\item If $\langle \lambda , \alpha \rangle  = -1$, then 
$H^{j}( L_{\alpha}/B_{\alpha} , V\otimes \mathbb{C}_{\lambda})=0$ 
for every $j\geq 0$.
\end{enumerate}
\end{lemma}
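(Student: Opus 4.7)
The plan is to reduce the computation to the rank-one case with a line bundle, which is already handled by Lemma \ref{dem1} and Lemma \ref{lemma2.2}. The crucial observation is that $V$ is a representation of the full Levi $L_\alpha$, not just of $B_\alpha$, so the associated $L_\alpha$-equivariant vector bundle on $L_\alpha/B_\alpha$ decouples as a tensor product of a trivial bundle with fiber $V$ and the line bundle attached to $\lambda$. (If needed, one first passes to the universal cover to make $V$ a bona fide representation, using that $L_\alpha/B_\alpha = \widetilde L_\alpha/\widetilde B_\alpha$.)

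First I would write down the canonical $L_\alpha$-equivariant isomorphism of vector bundles on $L_\alpha/B_\alpha$,
\[
L_\alpha \times^{B_\alpha}(V\otimes \mathbb{C}_\lambda) \;\cong\; \underline{V}\otimes \bigl(L_\alpha\times^{B_\alpha}\mathbb{C}_\lambda\bigr),
\]
where $\underline{V}$ denotes the trivial bundle with fiber $V$ equipped with the diagonal $L_\alpha$-action. The map is given on representatives by $[g,v\otimes z]\mapsto (g\cdot v,[g,z])$, and $B_\alpha$-equivariance is immediate from the relation $[gb,b^{-1}v\otimes\lambda(b)^{-1}z]=[g,v\otimes z]$ together with the fact that $g$ acts on $V$.

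Taking sheaf cohomology and applying the projection formula (equivalently, the Künneth formula, since $\underline V$ is a trivial bundle of finite rank) yields an $L_\alpha$-equivariant identification
\[
H^j(L_\alpha/B_\alpha,\, V\otimes \mathbb{C}_\lambda)\;\cong\; V\otimes H^j(L_\alpha/B_\alpha,\, \mathbb{C}_\lambda)
\]
for every $j\geq 0$. This reduces each of the three claims to the corresponding rank-one statement for the line bundle $\mathcal{L}_\lambda$ on $L_\alpha/B_\alpha\cong \mathbb{P}^1$. In case (1), Lemma \ref{dem1} gives $H^0(L_\alpha/B_\alpha,\mathbb{C}_\lambda)=H^0(s_\alpha,\lambda)$ and the vanishing of higher cohomology. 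In case (3), Lemma \ref{lemma2.2}(4) applied with $w=s_\alpha$, $\tau=e$ gives the vanishing of all cohomology of $\mathbb{C}_\lambda$. In case (2), Lemma \ref{lemma2.2}(3) with $w=s_\alpha$ and $j=0$ gives $H^1(s_\alpha,\lambda)=H^0(s_\alpha,s_\alpha\cdot\lambda)$; since $\langle s_\alpha\cdot\lambda,\alpha\rangle=-\langle\lambda,\alpha\rangle-2\geq 0$, this is precisely $H^0(L_\alpha/B_\alpha,\mathbb{C}_{s_\alpha\cdot\lambda})$, with vanishing $H^0$.

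I do not anticipate a real obstacle here; the argument amounts to decoupling $V$ from $\mathbb{C}_\lambda$ via the trivialization above and then quoting Demazure's rank-one computation. The only point requiring minor care is the equivariance check and the implicit passage to the universal cover to treat $V$ as an honest representation when its highest weight does not lie in the character lattice of $L_\alpha$; this is exactly why the notations $\widetilde L_\alpha$ and $\widetilde B_\alpha$ were introduced just before the statement, and it has no effect on the cohomology since $L_\alpha/B_\alpha = \widetilde L_\alpha/\widetilde B_\alpha$.
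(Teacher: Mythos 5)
Your proof is correct and is essentially the argument the paper intends: the paper states Lemma \ref{lemma2.3} without proof (introducing it merely as a ``consequence of Lemma \ref{lemma2.2}''), and the content you supply --- the $L_\alpha$-equivariant trivialization $L_\alpha\times^{B_\alpha}(V\otimes\mathbb{C}_\lambda)\cong \underline{V}\otimes\mathcal{L}_\lambda$, valid precisely because $V$ extends from $B_\alpha$ to all of $L_\alpha$, followed by the rank-one Demazure computations on $L_\alpha/B_\alpha\cong\mathbb{P}^1$ via Lemmas \ref{dem1} and \ref{lemma2.2} --- is exactly the standard argument being invoked. Your closing remark about passing to $\widetilde{L}_\alpha$ is also well placed, since in the paper's actual applications (through Lemma \ref{lemma2.4}) the module $V$ is only a representation of the cover, and the identification $L_\alpha/B_\alpha=\widetilde{L}_\alpha/\widetilde{B}_\alpha$ is what makes the lemma usable there.
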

Recall the structure of indecomposable 
$B_{\alpha}$-modules  ( respectively, $\widetilde {B}_{\alpha}$-modules ) ( see \cite[p.130, Corollary 9.1]{Ka1} ).
\begin{lemma}\label{lemma2.4}{\ }

\begin{enumerate}
 \item
Any finite dimensional indecomposable $\widetilde{B}_{\alpha}$-module $V$ is isomorphic to 
$V^{\prime}\otimes \mathbb{C}_{\lambda}$ for some irreducible representation
$V^{\prime}$ of $\widetilde{L}_{\alpha}$ and for some character $\lambda$ of $\widetilde{B}_{\alpha}$.
\item
Any finite dimensional indecomposable $B_{\alpha}$-module $V$ is isomorphic to 
$V^{\prime}\otimes \mathbb{C}_{\lambda}$ for some irreducible representation
$V^{\prime}$ of $\widetilde{L}_{\alpha}$ and for some character $\lambda$ of $\widetilde{B}_{\alpha}$.
\end{enumerate}
\end{lemma}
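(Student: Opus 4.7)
The plan is to exploit the fact that $\widetilde{B}_\alpha$ is the Borel subgroup of the Levi $\widetilde{L}_\alpha$ (whose unipotent radical is the one-parameter subgroup $U_{-\alpha}$), so that after decomposition under $\widetilde{T}$ the classification reduces to a combinatorial question about a linear quiver.

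First, I would decompose $V$ as a $\widetilde{T}$-module, $V = \bigoplus_\mu V_\mu$. A Lie algebra generator $X_{-\alpha}$ of $U_{-\alpha}$ acts as a $\widetilde{T}$-equivariant nilpotent endomorphism sending $V_\mu$ into $V_{\mu-\alpha}$. Partitioning the weights by their classes in $X(\widetilde{T})/\mathbb{Z}\alpha$ produces a direct sum decomposition into $\widetilde{B}_\alpha$-submodules; indecomposability forces a single nonzero class, and tensoring with a character $\mathbb{C}_{-\mu_0}$ reduces us to the case where every weight of $V$ is an integer multiple of $\alpha$. The remaining data amounts to a finite-dimensional representation of the linear $A_\infty$-quiver $\cdots \to \bullet \to \bullet \to \bullet \to \cdots$: a $\mathbb{Z}$-graded vector space $V = \bigoplus_n V_{n\alpha}$ equipped with the nilpotent degree-$(-1)$ endomorphism $X_{-\alpha}$.

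The indecomposable representations of this quiver are the interval modules, in which $\dim V_{n\alpha} = 1$ for $n$ in a consecutive interval $[a,b]$ and each $X_{-\alpha}$ between adjacent weight spaces is an isomorphism; this is the standard graded Jordan normal form for a nilpotent operator of fixed degree. The irreducible $\widetilde{L}_\alpha$-module $V(\lambda')$ of highest weight $\lambda'$ with $\langle \lambda', \alpha^\vee \rangle = b - a$ has exactly the same shape by classical $\mathfrak{sl}_2$-theory applied to $[\widetilde{L}_\alpha, \widetilde{L}_\alpha]$, so choosing any such $\lambda'$ and defining $\lambda$ to be the translation aligning the highest weights produces a $\widetilde{B}_\alpha$-equivariant isomorphism $V \cong V(\lambda') \otimes \mathbb{C}_\lambda$. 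This proves (1); part (2) follows at once because pulling back along $\pi|_{\widetilde{B}_\alpha} \colon \widetilde{B}_\alpha \to B_\alpha$ sends any indecomposable $B_\alpha$-module to an indecomposable $\widetilde{B}_\alpha$-module, to which (1) applies.

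The main conceptual step is the quiver/Jordan classification, but once we are reduced to a single $\alpha$-coset of weights this is just linear algebra (graded Jordan form of a nilpotent operator of fixed degree), requiring nothing further from algebraic group theory; the only genuinely group-theoretic input is the identification of the $\widetilde{T}$-action and the fact that $U_{-\alpha}$ is the unipotent radical of $\widetilde{B}_\alpha$.
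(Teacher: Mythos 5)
Your proof is correct, and there is a wrinkle in the comparison: the paper contains no internal proof of Lemma~\ref{lemma2.4} at all --- it is recalled from \cite[p.130, Corollary 9.1]{Ka1} --- so what you have written is a self-contained proof of the cited fact rather than an alternative to an argument in the text. Your route is the standard one and almost certainly the substance of the reference: over $\mathbb{C}$ a $\widetilde{B}_{\alpha}$-module is exactly an $X(\widetilde{T})$-graded vector space with a nilpotent operator $X_{-\alpha}$ of degree $-\alpha$ (algebraic $\mathbb{G}_a$-actions are exponentials of nilpotents in characteristic zero, and nilpotence is automatic since the operator strictly shifts a finite set of weights), the coset decomposition along $X(\widetilde{T})/\mathbb{Z}\alpha$ and the graded Jordan/interval classification are sound, and matching an interval of length $b-a+1$ with the $\alpha$-weight string of an irreducible $\widetilde{L}_{\alpha}$-module is correct $\mathfrak{sl}_2$-theory, since two interval modules with the same support and nonzero adjacent maps are isomorphic after rescaling a basis. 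Two points you leave implicit deserve a sentence each, though neither is a gap: (i) the existence of $\lambda'\in X(\widetilde{T})$ with $\langle \lambda', \alpha^{\vee}\rangle = b-a$ uses that $\widetilde{G}$ is simply connected, so that the fundamental weight $\omega_{\alpha}$ lies in $X(\widetilde{T})$; for the adjoint group itself the pairing $\langle\,\cdot\,,\alpha^{\vee}\rangle$ on $X(T)$ need not be surjective (indeed the paper's own example $\mathbb{C}h(\alpha_n)\oplus\mathbb{C}_{-\alpha_n}=V\otimes\mathbb{C}_{-\omega_n}$ requires a weight not in $X(T)$), which is precisely why both parts of the lemma are phrased in terms of $\widetilde{L}_{\alpha}$ and $\widetilde{B}_{\alpha}$; (ii) in your deduction of (2) from (1), indecomposability is preserved under pullback because $\pi$ maps $\widetilde{B}_{\alpha}$ \emph{onto} $B_{\alpha}$, so the $\widetilde{B}_{\alpha}$-submodules of a pulled-back module coincide with its $B_{\alpha}$-submodules; surjectivity is the actual content there and should be said. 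With those two one-line completions, your argument is a complete and correct substitute for the external citation.
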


Recall the following result from \cite{Ka4}(see \cite[Corollary 5.6]{Ka4}).
\begin{corollary} \label{short}
  Let $\alpha$ be a short root such that $-\alpha \notin S$, let $w\in W$. Then we have, 
  $H^i(w, \alpha)=0$ for $i\geq 1$.
 \end{corollary}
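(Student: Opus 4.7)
The plan is to prove the corollary by induction on $l(w)$. The base case $l(w)=0$ is immediate since $Z(e)$ is a point. For the inductive step, I choose a simple root $\gamma$ with $s_\gamma w < w$ and write $w = s_\gamma v$ with $l(v) = l(w)-1$. Applying the second short exact sequence of (SES) to $V=\mathbb{C}_\alpha$ gives
\begin{equation*}
0 \to H^1(s_\gamma, H^0(v,\alpha)) \to H^1(w,\alpha) \to H^0(s_\gamma, H^1(v,\alpha)) \to 0;
\end{equation*}
by the inductive hypothesis the rightmost term vanishes, so it suffices to prove $H^1(s_\gamma, H^0(v,\alpha))=0$. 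The vanishing of $H^i$ for $i\geq 2$ follows by iterating the same argument, together with the general fact that the fibers of the tower of $\mathbb{P}^1$-bundles defining $Z(w,\underline i)$ are one-dimensional.

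Next I would decompose $H^0(v,\alpha)$ as a $B_\gamma$-module via Lemma \ref{lemma2.4} into indecomposable summands $V'_k \otimes \mathbb{C}_{\mu_k}$ with $V'_k$ irreducible $\widetilde{L}_\gamma$-modules. By Lemma \ref{lemma2.3} the cohomology of each such summand vanishes in positive degree provided $\langle \mu_k, \gamma \rangle \geq -1$. Thus the argument reduces to a weight-control claim: every such $\mu_k$ satisfies $\langle \mu_k, \gamma \rangle \geq -1$. I would establish this by a secondary induction on $l(v)$, writing $v = v' s_\delta$ with $l(v)=l(v')+1$ and applying Lemma \ref{lemma2.2}. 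The hypothesis that $\alpha$ is a short root with $-\alpha \notin S$ guarantees that $\langle \alpha, \delta \rangle \in \{-1,0,1,2\}$ (the value $2$ occurring only when $\alpha=\delta$); the case $\langle \alpha,\delta\rangle=-1$ yields outright vanishing by Lemma \ref{lemma2.2}(4), while the remaining cases fall under Lemma \ref{lemma2.2}(1) and produce a filtration of $H^0(v,\alpha)$ whose factors are $H^0(v',\lambda)$ for $\lambda$ running through the weights of $H^0(s_\delta,\alpha)$ described by Lemma \ref{dem1}(3), namely the chain $\alpha, \alpha-\delta, \ldots, s_\delta(\alpha)$.

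The main obstacle is that the intermediate weights $\alpha - i\delta$ in this chain need not be roots, so the short-root hypothesis does not literally persist through the recursion. The cleanest remedy is to strengthen the induction hypothesis: one proves, for every character $\lambda$ satisfying $\langle \lambda, \delta'\rangle \geq -1$ for all simple $\delta'$, that $H^i(w,\lambda)=0$ for all $i\geq 1$ and all $w\in W$. This class of weights contains every short root $\alpha$ with $-\alpha \notin S$ (since for such $\alpha$ and any simple $\delta'$, either $\alpha = \delta'$ giving pairing $2$, or $\alpha$ and $\delta'$ are non-proportional roots with $|\alpha|\leq |\delta'|$ forcing the pairing into $\{-1,0,1\}$). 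Checking that this class is closed under the operations $\lambda\mapsto \lambda-\delta$ and $\lambda\mapsto s_\delta(\lambda)$ appearing in the composition series, and in particular verifying the edge cases where $s_\delta(\alpha)$ coincides with some $-\delta'$ for a simple $\delta'$, is the most delicate bookkeeping required. Once this closure property is in place, the secondary induction closes on this stronger class and the corollary follows.
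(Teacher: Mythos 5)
You should first note that the paper does not prove this corollary at all: it is imported verbatim from \cite[Corollary 5.6]{Ka4}, so your proposal has to be judged against the method used there and replayed in Sections 5--6 of this paper. Your scaffolding (induction on $l(w)$, the SES reduction of $H^1(w,\alpha)$ to $H^1(s_\gamma,H^0(v,\alpha))$, decomposition into indecomposable $B_\gamma$-summands $V'_k\otimes\mathbb{C}_{\mu_k}$, and Lemma \ref{lemma2.3}) is exactly the right machinery. The gap is in the pivot of your argument, the strengthened induction over the class $C=\{\lambda:\langle\lambda,\delta'\rangle\geq -1\ \text{for all }\delta'\in S\}$. First, the closure property you defer as ``delicate bookkeeping'' fails outright: if $\langle\lambda,\delta\rangle=m\geq 2$ then $\langle s_\delta(\lambda),\delta\rangle=-m\leq -2$, so $C$ is not stable under $\lambda\mapsto s_\delta(\lambda)$; already in the very first case your corollary must cover (and which Lemma \ref{lemma7.2} of the paper uses), $\alpha$ a short simple root, the composition series of $H^0(s_\alpha,\alpha)$ contains $\mathbb{C}_{-\alpha}$ with $\langle-\alpha,\alpha\rangle=-2$. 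So the secondary induction cannot close on $C$.

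Worse, the strengthened statement itself is false, so no bookkeeping can repair it. Take $G=PSp(6,\mathbb{C})$, $\lambda=-\omega_2+2\omega_3$ (a character of the adjoint torus, since $\lambda=e_1+e_2+2e_3$ lies in the root lattice), so $\langle\lambda,\alpha_1\rangle=0$, $\langle\lambda,\alpha_2\rangle=-1$, $\langle\lambda,\alpha_3\rangle=2$, i.e.\ $\lambda\in C$; and take $w=s_3s_2s_3$. By Lemma \ref{lemma2.2}(1), $H^1(w,\lambda)=H^1(s_3s_2,H^0(s_3,\lambda))$. Weight inspection shows $U_{\alpha_1}$ and $U_{\alpha_2}$ act trivially on $H^0(s_3,\lambda)$, whose $\alpha_2$-pairings are $-1,1,3$, so $H^1(s_2,H^0(s_3,\lambda))=0$ and, by SES, $H^1(w,\lambda)=H^1(s_3,N)$ with $N=H^0(s_2,H^0(s_3,\lambda))$. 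Again for weight reasons (the only candidate raisings by $\alpha_3$ would have to land inside the $B$-submodule $H^0(s_2,\mathbb{C}_{s_3(\lambda)})$, whose relevant weight spaces vanish), $U_{\alpha_3}$ acts trivially on $N$, so $N$ splits into characters as a $B_{\alpha_3}$-module and $H^1(s_3,N)=H^1(s_3,\mathbb{C}_{s_3(\lambda)})=\mathbb{C}_{\omega_2}\neq 0$, since $\langle s_3(\lambda),\alpha_3\rangle=-2$ and $s_3\cdot(s_3(\lambda))=\omega_2$; a Demazure-operator Euler-characteristic computation confirms this (the term $e^{\omega_2}$ cancels between $H^0$ and $H^1$). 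Thus membership in $C$ does not imply vanishing, and the true content of the corollary lies precisely in what your plan omits: when a weight such as $-\gamma$ (pairing $-2$) occurs in $H^0(v,\alpha)$ for $\alpha$ short, it comes glued to the zero weight inside an indecomposable $B_\gamma$-summand $\mathbb{C}h_\gamma\oplus\mathbb{C}_{-\gamma}\cong V'\otimes\mathbb{C}_{-\omega_\gamma}$ whose twisting character pairs $-1$ (compare Step 1 of Lemma \ref{Weights} and Lemma \ref{g'} in this paper), and this structural control, together with the embedding of $H^0(w,\alpha)$ into $H^0(ww_1^{-1},\beta_0)$ for the dominant highest short root $\beta_0$, is what drives the proof in \cite{Ka4}. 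A purely weight-theoretic induction cannot distinguish this good configuration from the bad split one exhibited above, so the gap is essential.
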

 
 \section{Reduced expressions differing only by commuting relations}
  Let $w\in W$ and let $\underline i:=(i_1, \ldots, i_{r})$ be the tuple corresponding to a reduced expression 
  $w=s_{i_{1}}s_{i_{2}}\cdots s_{i_{r}}$ of $w$. Note that if $\langle \alpha_{i_k}, \alpha_{i_{k+1}}\rangle=0$ for 
  some $1\leq k\leq r-1$, then  $(i_1, \ldots, i_{k-1}, i_{k+1}, i_k, i_{k+2},\ldots, i_r)$ is also a tuple corresponding to a reduced expression
  of $w$. 
  
 Two reduced expressions $\underline i$ and $\underline j$ of $w$ are said to differ only by commuting relations if $\underline j$ is obtained from $\underline i$ by a sequence of process as above.

 In this section we prove the following:
   \begin{theorem}\label{com}
  Let $w=s_{i_1}s_{i_2}\cdots s_{i_r}$ and $w=s_{j_1}s_{j_2}\cdots s_{j_r}$ be two reduced
  expressions $\underline i$ and $\underline j$ for $w$ which differ only by commuting relations. Then, 
 there is  a $B$-equivariant isomorphism  from $Z(w, \underline i)$ onto $Z(w, \underline j)$.
  \end{theorem}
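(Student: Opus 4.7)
The plan is to reduce to the case of a single commuting swap and then construct a $B$-equivariant isomorphism by ``trading'' a consecutive pair of parabolics through the rank-two parabolic they generate. First I would observe that any chain of commuting-relation moves decomposes into single adjacent swaps at positions $k, k+1$ where $\langle \alpha_{i_k}, \alpha_{i_{k+1}}\rangle = 0$, so by induction on the number of moves it suffices to construct an isomorphism for a single such swap. Set $\alpha = \alpha_{i_k}$, $\beta = \alpha_{i_{k+1}}$, and let $P_{\alpha,\beta}$ denote the parabolic subgroup of $G$ generated by $P_\alpha$ and $P_\beta$.

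The key local statement is that each of the multiplication maps
\[ P_\alpha \times^{B} P_\beta \longrightarrow P_{\alpha,\beta}, \qquad P_\beta \times^{B} P_\alpha \longrightarrow P_{\alpha,\beta} \]
is a $B \times B$-equivariant isomorphism (left $B$ acting on the first factor, right $B$ on the second). I would prove this from three ingredients. The Bruhat-type computation
\[ P_\alpha P_\beta \;=\; B \,\sqcup\, Bs_\alpha B \,\sqcup\, Bs_\beta B \,\sqcup\, Bs_\alpha s_\beta B \;=\; P_{\alpha,\beta}, \]
which uses $s_\alpha s_\beta = s_\beta s_\alpha$ together with the $BN$-pair identity $(Bs_\alpha B)(Bs_\beta B) = Bs_\alpha s_\beta B$ valid since $\ell(s_\alpha s_\beta)=2$, gives surjectivity; the identity $P_\alpha \cap P_\beta = B$ (in a $BN$-pair, $s_\alpha \in P_\beta$ forces $\alpha = \beta$) shows the fibers of multiplication are exactly the free $B$-orbits $(p,q)\cdot b = (pb, b^{-1}q)$; and a dimension count plus smoothness of both sides promotes the set-theoretic bijection on $B$-quotients to an isomorphism of varieties. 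Composing the two isomorphisms yields a canonical $B \times B$-equivariant isomorphism
\[ \Psi_{\alpha,\beta} : P_\alpha \times^B P_\beta \xrightarrow{\;\sim\;} P_\beta \times^B P_\alpha, \qquad [p, q] \longmapsto [q', p'], \]
where $q' \in P_\beta$ and $p' \in P_\alpha$ are determined by the equation $pq = q'p'$.

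I would then define the global map $\Phi : Z(w, \underline i) \to Z(w, \underline j)$ by applying $\Psi_{\alpha,\beta}$ in slots $(k,k+1)$ and leaving the other slots untouched. The main technical step is to check well-definedness against the $B$-actions at the two ``interfaces'' $(k-1,k)$ and $(k+1,k+2)$. For the first one, a change $(p_{k-1}, p_k)\mapsto (p_{k-1}b, b^{-1}p_k)$ replaces $p_k p_{k+1}$ by $b^{-1}(q_{k+1}q_k) = (b^{-1}q_{k+1})\,q_k$, which is exactly the matching $B$-move in slot $(k-1,k)$ on the $\underline j$-side; a symmetric calculation handles the $(k+1,k+2)$-interface. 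Since $\Phi$ preserves the first slot (or, when $k=1$, intertwines the left $B$-action via $bp_1 p_2 = (bq_2)q_1$), it is $B$-equivariant, and invertibility is automatic from the symmetry of $\Psi_{\alpha,\beta}$.

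The principal obstacle I anticipate is just the bookkeeping in the two interface calculations, where one must carefully track the simultaneous cancellations of $b$ on the ``$\alpha$-side'' and the ``$\beta$-side'' of the product $pq = q'p'$; every other ingredient is standard or formal once $\Psi_{\alpha,\beta}$ is in place.
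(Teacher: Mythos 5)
Your proof is correct and follows essentially the same route as the paper's: both arguments reduce to a single adjacent swap and pass through the rank-two parabolic generated by the two commuting minimal parabolics, using surjectivity of the multiplication map, the identity $P_{\alpha}\cap P_{\beta}=B$ to control fibers, and a bijectivity-plus-normality (Zariski main theorem) argument to get an isomorphism. The only cosmetic difference is that the paper realizes your local isomorphism $\Psi_{\alpha,\beta}$ globally, exhibiting both $Z(w,\underline i)$ and $Z(w,\underline j)$ as $B$-equivariantly isomorphic to the intermediate quotient variety in which the slots $k$ and $k+1$ are merged into a single slot carrying $P$.
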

  \begin{proof}
By the recursion, with out of loss of generality, we may assume that there is a $1\leq k\leq r-1$ such that $\langle \alpha_{i_{k}}, \alpha_{i_{k+1}} \rangle =0$ and  satisfying 
$$i_l=j_l ~ \mbox{for}  ~ l\neq k, ~ k+1 ;  ~ i_k=j_{k+1},  ~ i_{k+1}=j_k.$$   

Let $P$ be the parabolic subgroup of $G$ corresponding to the subset $\{\alpha_{i_{k}}, \alpha_{i_{k+1}}\}$ of  $S$. Since the simple roots  $\alpha_{i_{k}}$  and $\alpha_{i_{k+1}}$ are orthogonal, 
the product map $P_{\alpha_{i_{k}}}\times P_{\alpha_{i_{k+1}}}\to P$ is surjective.

Let $$X:=(P_{\alpha_{i_1}}\times \cdots \times P_{\alpha_{i_{k-1}}}\times P \times P_{\alpha_{i_{k+2}}}\times\cdots \times P_{\alpha_{i_{r}}})/B^{r-1}$$ be the quotient variety where the action of $B^{r-1}$ 
on $$P_{\alpha_{i_1}}\times \cdots \times P_{\alpha_{i_{k-1}}}\times P \times P_{\alpha_{i_{k+2}}}\times\cdots \times P_{\alpha_{i_{r}}}$$ is given by $(p_1,\ldots, p_{k-1}, p, p_{k+2}, \ldots, p_{r} )\cdot (b_1, b_2,\ldots, b_{k-1}, b_{k+1}, \ldots, b_{r})
=(p_1b_1, b_{1}^{-1}p_2b_2,\ldots, b_{k-1}^{-1}pb_{k+1}, b_{k+1}^{-1}p_{k+2}b_{k+2},\ldots, b_{r-1}^{-1}p_{r}b_{r}),$ $p_{j}\in P_{\alpha_{i_{j}}}$, $p\in P$ and $b_{j}\in B$. 

Note that $X$ is a smooth projective variety.
Now consider the map 

$ P_{\alpha_{i_1}}\times \cdots \times P_{\alpha_{i_{k-1}}} \times P_{\alpha_{i_{k}}} \times P_{\alpha_{i_{k+1}}}\times\cdots\times  
P_{\alpha_{i_{r}}}\longrightarrow P_{\alpha_{i_1}}\times \cdots \times P_{\alpha_{i_{k-1}}}\times P \times P_{\alpha_{i_{k+2}}}\times\cdots \times P_{\alpha_{i_{r}}}$

given by
$$(p_1,\ldots, p_{k-1},p_{k}, p_{k+1}, p_{k+2},\ldots, p_{r})\mapsto (p_1,\ldots, p_{k-1},p_{k}p_{k+1}, p_{k+2},
\ldots,p_{r}).$$
This induces a birational surjective $B$-equivariant morphism 
$f:Z(w, \underline i) \to X.$

\underline{Claim}: $f$ is injective.

We denote by $[(p_1,p_2,\ldots, p_{k}p_{k+1}, p_{k+2}, \ldots , p_{r})]$ be the point in $X$ corresponding to 
$(p_1,p_2,\ldots, p_{k-1}, p_{k}p_{k+1}, p_{k+2}, \ldots, p_{r})\in P_{\alpha_{i_1}}\times
\cdots \times P_{\alpha_{i_{k-1}}}\times P \times P_{\alpha_{i_{k+2}}}\times\cdots \times P_{\alpha_{i_{r}}}$.

If $[(p_1,p_2,\ldots,p_{k-1}, p_{k}p_{k+1}, \ldots, p_{r})]=[(p'_1,p'_2,\ldots, p'_{k-1}, p'_{k}p'_{k+1}, \ldots, p_{r})]$, then there exists $(b_1, b_2, \ldots , b_{r-2}, b_{r})$
such that $p_1=p_1'b_1$, $p_l=b_{l-1}^{-1}p_l'b_{l}$ for all $l\neq k,k+1$ and $p_{k}p_{k+1}=b_{k-1}^{-1}p'_{k}p'_{k+1}b_{k+1}$.
Hence we have  $(b_{k-1}^{-1}p'_{k})^{-1}p_{k}=p_{k+1}'b_{k+1}p_{k+1}^{-1}(~say~ =b_k) \in P_{\alpha_{i_k}}\cap P_{\alpha_{i_{k+1}}}=B$.
Therefore $p_{k}=b_{k-1}^{-1}p_{k}'b_{k}$ and $p_{k+1}=b_{k}^{-1}p_{k+1}'b_{k+1}$.
Thus, we have $(p_1,\ldots, p_{k-1},p_{k}, p_{k+1}, p_{k+2},\ldots, p_{r})$ and $(p'_1,\ldots, p'_{k-1},p'_{k}, p'_{k+1}, p'_{k+2},\ldots, p'_{r})$
represents the same element in $Z(w, \underline i)$.
Hence $f$ is injective.

Since $X$ is normal and $f$ is bijective birational, by Zariski main theorem \cite[p. 85, Theorem 5.2.8]{Springer}, we conclude that $f$ is an isomorphism. 
 Similarly, we see that there is a $B$-equivariant isomorphism from $Z(w, \underline j)$ onto $X$.
Thus, there is a $B$-equivariant isomorphism from $Z(w, \underline i)$  onto $Z(w, \underline j).$
\end{proof}

 \begin{corollary}
 \label{com1}
 Let $w=s_{i_1}s_{i_2}\cdots s_{i_r}$ and $w=s_{j_1}s_{j_2}\cdots s_{j_r}$ be two reduced
  expressions $\underline i$ and $\underline j$ for $w$ which differ only by commuting relations. Then,
 we have 
 
\begin{enumerate}
   \item  The automorphism groups $Aut^0(Z(w, \underline i))$ and $Aut^0(Z(w, \underline j))$ are isomorphic.
   \item The first cohomology groups
  $H^1(Z(w, \underline i), T_{(w, \underline i)})$ and $H^1(Z(w, \underline j), T_{(w, \underline j)})$
are isomorphic.
  \end{enumerate}
 \end{corollary}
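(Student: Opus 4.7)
The plan is to deduce both statements directly from Theorem \ref{com}, which produces a $B$-equivariant isomorphism of varieties $\varphi : Z(w,\underline i) \xrightarrow{\sim} Z(w,\underline j)$. Since $\varphi$ is an isomorphism of smooth projective varieties, every geometric invariant canonically attached to the variety transports across $\varphi$; the content of the corollary is just to spell this out for the two invariants in question.

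For part (1), I would observe that $\varphi$ induces a group isomorphism on the full automorphism group schemes via the conjugation map $f \mapsto \varphi \circ f \circ \varphi^{-1}$. This is clearly a homomorphism with inverse given by conjugation by $\varphi^{-1}$, and it is an isomorphism of algebraic groups because $\varphi$ and $\varphi^{-1}$ are morphisms of varieties. Restricting to the connected component of the identity yields the desired isomorphism $Aut^0(Z(w, \underline i)) \cong Aut^0(Z(w, \underline j))$.

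For part (2), I would use that the tangent bundle is a functorial construction: the differential $d\varphi$ provides an isomorphism of vector bundles $T_{Z(w,\underline i)} \cong \varphi^{*}T_{Z(w,\underline j)}$. Since $\varphi$ is an isomorphism, pullback along $\varphi$ induces isomorphisms $H^{j}(Z(w,\underline j), T_{Z(w,\underline j)}) \cong H^{j}(Z(w,\underline i), \varphi^{*}T_{Z(w,\underline j)})$ on sheaf cohomology for all $j \geq 0$. Composing with the isomorphism coming from $d\varphi$ yields $H^{1}(Z(w, \underline i), T_{(w, \underline i)}) \cong H^{1}(Z(w, \underline j), T_{(w, \underline j)})$, as required.

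There is no real obstacle here; the entire content of the corollary is packaged inside Theorem \ref{com}, and the proof reduces to invoking the functoriality of $Aut^0(\,\cdot\,)$ and of $H^{1}(\,\cdot\,, T_{(\cdot)})$ under isomorphisms of varieties. The only thing worth writing explicitly is the conjugation map in (1) and the pullback identification $d\varphi : T_{Z(w,\underline i)} \xrightarrow{\sim} \varphi^{*}T_{Z(w,\underline j)}$ in (2).
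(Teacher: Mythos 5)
Your proposal is correct and follows exactly the route the paper takes: the paper's entire proof is a one-line appeal to Theorem \ref{com}, and your argument simply makes explicit the transport of structure (conjugation on $Aut^0$ and pullback via $d\varphi$ on $H^1$ of the tangent bundle) that the paper leaves implicit.
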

 \begin{proof}
  The proof follows from Proposition \ref{com}.
 \end{proof}
 
 Recall from \cite{Stembridge1} and \cite{Stembridge2} the definition of a fully commutative element of a Coxeter group. 
 An element $w\in W$ is said to be fully commutative 
 if it has the property
 that any reduced expression for $w$ can be obtained from any other by using only the Coxeter relations that involve commuting generators.
 These elements are characterized in \cite{Stembridge1} and \cite{Stembridge2}.
 
 Then we have 
  \begin{corollary}
  Let $w$ be a fully commutative element in $W$. Then, $Z(w, \underline i)$ is independent of the choice of the reduced expression
  $\underline i$ of $w$.
 \end{corollary}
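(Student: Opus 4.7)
The plan is essentially to unwind the definition of \emph{fully commutative} and apply Theorem \ref{com} iteratively. First, I would pick any two reduced expressions $\underline i$ and $\underline j$ of the fully commutative element $w$. By the very definition recalled from \cite{Stembridge1,Stembridge2}, there is a finite sequence of reduced expressions
\[
\underline i = \underline i^{(0)}, \underline i^{(1)}, \ldots, \underline i^{(N)} = \underline j
\]
such that each consecutive pair $\underline i^{(t)}$ and $\underline i^{(t+1)}$ differs by a single Coxeter relation involving commuting generators, i.e.\ by swapping adjacent simple reflections $s_{\alpha}$ and $s_{\beta}$ with $\langle \alpha, \beta\rangle =0$. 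In other words, each consecutive pair differs only by a commuting relation in precisely the sense used in Theorem \ref{com}.

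Next, I would apply Theorem \ref{com} to each step of this chain to obtain a $B$-equivariant isomorphism
\[
Z(w, \underline i^{(t)}) \xrightarrow{\ \sim\ } Z(w, \underline i^{(t+1)})
\]
for every $t = 0, 1, \ldots, N-1$. Composing these $B$-equivariant isomorphisms yields a $B$-equivariant isomorphism $Z(w, \underline i) \xrightarrow{\sim} Z(w, \underline j)$, which is the desired conclusion.

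There is no real obstacle here beyond bookkeeping: the content is entirely carried by Theorem \ref{com}, and the role of full commutativity is exactly to guarantee that the intermediate braid moves (the ones involving non-commuting generators, e.g.\ $s_\alpha s_\beta s_\alpha = s_\beta s_\alpha s_\beta$) never need to be invoked when passing from one reduced expression to another. Thus the corollary is a direct consequence of Theorem \ref{com} applied along a sequence of commutation moves supplied by the fully commutative hypothesis, and no new geometric input is required.
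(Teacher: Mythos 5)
Your proof is correct and is exactly the argument the paper intends: the paper states this corollary without proof precisely because it follows by chaining Theorem \ref{com} along the sequence of commutation moves that the fully commutative hypothesis guarantees. No gap; your write-up just makes the implicit induction explicit.
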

 
 \section{Reduced expressions of some elements of  $W$  in  type $C_{n}$}
 
 Now onwards we will assume that $G=PSp(2n, \mathbb C)$ ($n\geq 3$).
 First note that if $n\geq 3$, then the highest short root $\beta_0$ is $\omega_2$ and  $w_0=-id$.
We  recall the following proposition from \cite{Zele}(see \cite[Proposition 1.3]{Zele}). 
   We use the  notation as in \cite{Zele}.
   \begin{proposition} \label{Z}   
   Let $c \in W$ be a Coxeter element, let $\omega_i$ be a fundamental weight corresponding 
   to the simple root $\alpha_i$. Then, there exists a least positive integer 
   $h(i, c)$ such that $c^{h(i, c)}(\omega_i)=w_0(\omega_i)$.
    \end{proposition}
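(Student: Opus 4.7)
The plan is to reduce the existence of $h(i,c)$ to the classical fact that in type $C_n$ the $n$-th power of any Coxeter element equals the longest element $w_0$. Once we produce some positive integer $m$ with $c^{m}(\omega_i) = w_0(\omega_i)$, the least such integer is automatically well-defined by the well-ordering of $\mathbb{N}$, so only existence requires proof.

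First I would record the special feature of type $C_n$: for $G = PSp(2n, \mathbb{C})$, one has $w_0 = -\mathrm{id}$ on the weight lattice $X(T)$, so $w_0(\omega_i) = -\omega_i$ for every $i$. The question therefore reduces to exhibiting a positive integer $m$ with $c^{m}(\omega_i) = -\omega_i$.

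Next I would invoke the classical theory of Coxeter elements due to Coxeter and Kostant: for any Coxeter element $c$ in an irreducible Weyl group, the eigenvalues of $c$ acting on the complexified reflection representation are $\zeta^{m_1}, \ldots, \zeta^{m_n}$, where $\zeta = e^{2\pi i / h}$ is a primitive $h$-th root of unity, $h$ is the Coxeter number, and $m_1, \ldots, m_n$ are the exponents. For type $C_n$ one has $h = 2n$ and the exponents are the odd integers $1, 3, 5, \ldots, 2n - 1$. Raising to the $n$-th power, the eigenvalues of $c^{n}$ become $\zeta^{n m_j} = e^{\pi i m_j} = -1$ for every $j$, since every $m_j$ is odd. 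Hence $c^{n}$ acts as $-\mathrm{id}$ on the reflection representation, and consequently as $-\mathrm{id}$ on $X(T) \otimes_{\mathbb{Z}} \mathbb{R}$. This gives $c^{n}(\omega_i) = -\omega_i = w_0(\omega_i)$, showing that the set of positive integers $m$ satisfying $c^{m}(\omega_i) = w_0(\omega_i)$ is nonempty (in fact contains $n$), so its minimum $h(i,c) \leq n$ exists.

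The main obstacle is only the invocation of the eigenvalue structure of Coxeter elements, which is a standard but nontrivial classical result. If one preferred a self-contained argument within the present setting, one could instead use the explicit reduced expression $c = \prod_{j=1}^{k} [a_j, a_{j-1} - 1]$ recorded in the paper and compute $c^{n}(\omega_i)$ directly by tracking, block by block, the action of each $[a_j, a_{j-1} - 1]$ on fundamental weights in type $C_n$; this avoids the general eigenvalue theorem at the cost of a combinatorial case analysis, and would be a natural substitute if one wished to keep the argument in line with the explicit computations carried out in Section~4.
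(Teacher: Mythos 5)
Your proposal is correct, but it is worth pointing out that the paper does not prove this statement at all: Proposition \ref{Z} is simply recalled from Yang--Zelevinsky \cite[Proposition 1.3]{Zele}, where it is established for an arbitrary finite Weyl group. What you supply is a direct, type-specific proof: since Section 4 of the paper already assumes $G=PSp(2n,\mathbb C)$, you may use $w_0=-\mathrm{id}$, and the Coxeter--Kostant eigenvalue theorem (eigenvalues $e^{2\pi i m_j/h}$ with $h=2n$ and all exponents $m_j$ odd in type $C_n$) gives $c^n=-\mathrm{id}$ on $X(T)\otimes\mathbb R$, hence $c^n(\omega_i)=w_0(\omega_i)$ and the least such exponent exists by well-ordering. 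This is a valid argument, and it in fact proves more than the cited proposition in this case --- it directly yields $w_0=c^n$, which is the content of Lemma \ref{lemma: 1}(1); there the paper instead combines Proposition \ref{Z} with the identity $h(i,c)+h(i^*,c)=h$ from \cite[Proposition 1.7]{Zele} and $i=i^*$ to get $h(i,c)=n$. The trade-off is generality: your eigenvalue argument is tied to the facts that $w_0=-\mathrm{id}$ and that every exponent is odd, so it does not recover the proposition for arbitrary type (where $w_0(\omega_i)$ need not be $-\omega_i$ and existence of $h(i,c)$ is genuinely less obvious), whereas the citation to \cite{Zele} covers all finite types. For the purposes of this paper your argument suffices, and it is arguably more self-contained than an external citation, modulo the classical eigenvalue theorem it invokes.
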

   Now we can deduce the following:
 \begin{lemma}\label{lemma: 1} Let $c \in W$ be a Coxeter element. Then, we have
 \begin{enumerate}
  \item $w_0=c^n$. 
  \item For any sequence $\underline i^r$ $(1\leq r\leq n)$ of reduced expressions of $c$; 
  the sequence $\underline i=(\underline i^1, \underline i^2,\ldots, \underline i^n)$ is a reduced expression of $w_0$.
\end{enumerate}
  \end{lemma}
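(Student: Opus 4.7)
The plan has two pieces, corresponding to (1) and (2).

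For part (1), I apply Proposition \ref{Z} to each fundamental weight $\omega_i$: it furnishes a least integer $h(i,c) \geq 1$ with $c^{h(i,c)}(\omega_i) = w_0(\omega_i)$. In type $C_n$ with $n \geq 3$ we have $w_0 = -\mathrm{id}$, so this reads $c^{h(i,c)}(\omega_i) = -\omega_i$. Since the fundamental weights form a basis of the weight lattice, it suffices to show $c^n(\omega_i) = -\omega_i$ for every $i$, and for this it suffices to show $h(i,c) = n$ for every $i$. I would establish the latter either by a direct combinatorial inspection using the explicit form $c = \prod_{j=1}^{k}[a_j, a_{j-1}-1]$ recalled in the introduction (tracking the orbit of each $\omega_i$ under iteration of $c$), or, more uniformly, by invoking Kostant's spectral theorem: the eigenvalues of $c$ on the reflection representation are $e^{2\pi i(2j-1)/2n}$ for $j=1,\ldots,n$ (the exponents of $C_n$ being $1,3,\ldots,2n-1$ and the Coxeter number being $h=2n$), and each such eigenvalue raised to the $n$-th power equals $e^{\pi i(2j-1)} = -1$. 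Hence $c^n$ acts as $-\mathrm{id}$ on the reflection representation, and since $W$ acts faithfully there, $c^n = -\mathrm{id} = w_0$ in $W$.

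For part (2), the number of positive roots of $C_n$ is $n^2$, so $\ell(w_0) = n^2$. Each $\underline{i}^r$ is a reduced expression of $c$ of length $n$, so the concatenation $\underline{i} = (\underline{i}^1,\ldots,\underline{i}^n)$ has total length $n \cdot n = n^2$ and, read off as a product of simple reflections, represents the element $c \cdot c \cdots c = c^n$. By (1) this equals $w_0$, so $\underline{i}$ exhibits $w_0$ as a product of exactly $\ell(w_0)$ simple reflections; such an expression is automatically reduced.

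The only real obstacle is the equality $h(i,c) = n$ in part (1), i.e.\ the fact that a single application of the Coxeter element has to be iterated exactly $n$ times (and not fewer) before $w_0$-acts on each $\omega_i$. I would dispatch this via the Kostant eigenvalue description, which handles all $i$ simultaneously and avoids any case split on the shape of the Coxeter element. Once part (1) is in hand, part (2) is a one-line length count.
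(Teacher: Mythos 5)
Your proof is correct, and part (2) coincides with the paper's argument: it is exactly the length count $\ell(c)=n$, $\ell(w_0)=|R^{+}|=n^{2}$, so the concatenation has length $\ell(w_0)$, represents $c^{n}=w_0$ by part (1), and is therefore reduced. For part (1) you take a genuinely different route to the key fact that $c^{n}$ acts as $-\mathrm{id}$. The paper stays inside the framework of Proposition \ref{Z}: it invokes the identity $h(i,c)+h(i^{*},c)=h$ from \cite[Proposition 1.7]{Zele}, notes that $w_0=-\mathrm{id}$ in type $C_{n}$ forces $i=i^{*}$ for every $i$, computes $h=2|R^{+}|/n=2n$ to get $h(i,c)=n$, and then applies Proposition \ref{Z} together with the fact that the $\omega_i$ form a basis. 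You instead use Kostant's eigenvalue description of a Coxeter element: the eigenvalues on the reflection representation are $e^{2\pi i m_j/h}$ with exponents $m_j=2j-1$ and $h=2n$ in type $C_{n}$, so every eigenvalue of $c^{n}$ equals $-1$, and semisimplicity plus faithfulness of the reflection representation give $c^{n}=-\mathrm{id}=w_0$. Both arguments are sound. Yours is the classical statement that $c^{h/2}=w_0$ whenever $-1\in W$, and it bypasses the Yang--Zelevinsky identity entirely at the cost of importing the list of exponents; the paper's version is self-contained relative to the reference it already cites for Proposition \ref{Z}. One small remark: your intermediate reduction to ``$h(i,c)=n$'' is superfluous in the eigenvalue argument, since you prove $c^{n}=-\mathrm{id}$ directly and the minimality built into $h(i,c)$ plays no role in the conclusion; your fallback of tracking orbits of each $\omega_i$ combinatorially would require a case analysis over the shape of $c$ that the eigenvalue argument happily avoids.
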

 
 \begin{proof} Proof of (1): Let $\eta:S\longrightarrow S$ be the involution of $S$ defined by 
 $i\longmapsto i^*$, where $i^*$ is given by $\omega_{i^*}=-w_0(\omega_i)$.
 Since $G$ is of type $C_n$, $w_0=-id$ and hence $\omega_{i^*}=\omega_i$ for every $i$. 
 Therefore, we have $i=i^*$ for every $i$.
 Let $h$ be the Coxeter number. By \cite[Proposition 1.7]{Zele}, we have 
 $h(i,c)+h(i^*,c)=h$.
 Since $h=\frac{2|R^+|}{n}$ (see \cite[Proposition 3.18]{Humreflection}) and $i=i^*$, we have $h(i,c)=n$.

 By Proposition \ref{Z}, we have $c^n(\omega_i)=-\omega_i$ for all $1\leq i \leq n$.
Since $\{\omega_i : 1\leq i\leq n\}$ forms a  $\mathbb R$-basis of $X(T)\otimes \mathbb R$, it follows that $c^n=-id$.
Hence, we have $w_0=c^n$. 

The assertion (2) follows from the fact that $l(c)=n$ and $l(w_0)=|R^+|=n^2$ (see \cite[p.66, Table 1 ]{Hum1}).
  \end{proof}
  
 {\it We note that there are reduced expressions of $w_{0}$  in $C_n$ which are neither of the form
as in Lemma 4.2(2) nor differing from such a form by commuting relations.  For example, we can take 
 $n=3$. Then, the reduced expression $w_{0}=s_{2}s_{3}s_{2}s_{3}s_{1}s_{2}s_{3}s_{2}s_{1}$ is 
  one such. }

    \begin{lemma}\label{reduced}
   Let $n \geq a_1>a_2>a_3>\ldots >a_{r}\geq 1$ be a decreasing sequence of integers. Then,
   $$w=(\prod_{j=a_1}^n s_j)(\prod_{j=a_2}^n s_j)(\prod_{j=a_3}^{n} s_j)\cdots (\prod_{j=a_{r-1}}^{n} s_j)(\prod_{j=a_r}^{n-1} s_j)$$ is a 
   reduced expression of $w$.
  \end{lemma}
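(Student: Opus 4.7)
The plan is to show the expression is reduced by verifying that each simple reflection, when appended on the right, strictly increases the length. Writing $w = [a_1,n]\cdot[a_2,n]\cdots[a_{r-1},n]\cdot[a_r,n-1]$, set $\sigma_k := [a_k,n]$ for $k<r$, $\sigma_r := [a_r,n-1]$, and $\pi_k := \sigma_1\sigma_2\cdots\sigma_k$. I will realize $W$ as the group of signed permutations of $\{1,\ldots,n\}$, with $s_i$ ($i<n$) swapping $e_i\leftrightarrow e_{i+1}$ and $s_n$ sending $e_n\mapsto -e_n$. A direct inspection identifies each $\sigma_k$ as the signed cycle that fixes $e_p$ for $p<a_k$, sends $e_p\mapsto e_{p+1}$ for $a_k\leq p<n$, and sends $e_n$ to $-e_{a_k}$ (if $k<r$) or to $+e_{a_r}$ (if $k=r$).

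The main technical ingredient is the following claim, to be proved by induction on $k$ with the convention $a_0:=n+1$: $\pi_k$ fixes $e_p$ for every $p<a_k$, and for every $p\in\{a_k,a_k+1,\ldots,n\}$ the signed value $\pi_k(p)$ satisfies $|\pi_k(p)|\geq a_k$. The induction uses $\pi_k(p)=\pi_{k-1}(\sigma_k(p))$ together with the explicit form of $\sigma_k$; the delicate sub-case is $p=n$, where the sign flip in $\sigma_k$ forces $\pi_k(n)=\mp\pi_{k-1}(a_k)=\mp a_k$, using $a_k<a_{k-1}$ together with part~(i) of the inductive hypothesis on $\pi_{k-1}$ (which pins $\pi_{k-1}(a_k)=a_k$).

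With the claim established, I verify the step condition. Suppose the next reflection to append is $s_j$, occurring in the $k$-th factor $\sigma_k$, so the partial product up to (but not including) $s_j$ equals $u=\pi_{k-1}\,\tau$ with $\tau := s_{a_k}s_{a_k+1}\cdots s_{j-1}$ (possibly empty). Since $\tau$ is a type-$A$ cycle one has $\tau(\alpha_j)=e_{a_k}-e_{j+1}$ when $j<n$ and $\tau(\alpha_n)=2e_{a_k}$ when $j=n$. Applying $\pi_{k-1}$: the vector $e_{a_k}$ is fixed because $a_k<a_{k-1}$; and by part~(ii) of the claim at level $k-1$, $\pi_{k-1}(e_{j+1})$ is either $e_{j+1}$ (when $j+1<a_{k-1}$) or of the form $\pm e_q$ with $q\geq a_{k-1}>a_k$. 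In every sub-case $\pi_{k-1}(e_{a_k}-e_{j+1})$ comes out as one of $e_{a_k}-e_{j+1}$, $e_{a_k}-e_q$ with $a_k<q$, or $e_{a_k}+e_q$, each of which is a positive root; and $\pi_{k-1}(2e_{a_k})=2e_{a_k}>0$. Hence $u(\alpha_j)>0$ and the length increases by one.

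The main obstacle will be formulating and proving the key claim cleanly, particularly the sign-flip sub-case at position $n$ and the control it yields on the "active" values at positions $\geq a_k$; once the claim is in hand the step verification is essentially forced. Note that the sub-case $j=n$ only arises within the first $r-1$ factors, which is exactly why the last factor $\sigma_r$ is allowed to terminate at $s_{n-1}$ without breaking the argument.
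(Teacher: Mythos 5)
Your argument is correct, and it takes a genuinely different route from the paper's. The paper proves the lemma by induction on the number of blocks $r$: it regroups $w=w_1w_2$ with $w_1=(\prod_{j=a_1}^n s_j)\cdots(\prod_{j=a_{r-1}}^{n-1}s_j)$ (an expression of the same shape with $r-1$ blocks, reduced by induction) and $w_2=s_n\prod_{j=a_r}^{n-1}s_j$, computes the inversion set $R^+(w_2^{-1})$ explicitly, and concludes $\ell(w)=\ell(w_1)+\ell(w_2)$ from the disjointness $R^+(w_1)\cap R^+(w_2^{-1})=\emptyset$. You instead work in the signed-permutation model of $W(C_n)$, identify each block as an explicit signed cycle, establish by induction on $k$ the structural claim that $\pi_k$ fixes $e_p$ for $p<a_k$ and sends the remaining $e_p$ to $\pm e_q$ with $q\geq a_k$, and then verify letter by letter that $u(\alpha_j)\in R^+$ for each prefix $u$; all of your computations check out (in particular $\tau(\alpha_j)=e_{a_k}-e_{j+1}$, $\tau(\alpha_n)=2e_{a_k}$, and the fact that $\pi_{k-1}$ fixes $e_{a_k}$ while moving $e_{j+1}$ only to $\pm e_q$ with $q>a_k$, so the image is always of the form $e_{a_k}\mp e_q$ with $a_k<q$ or $2e_{a_k}$, hence positive). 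The trade-off: the paper's proof is shorter and exploits the block structure cleanly, but the disjointness of the two inversion sets is asserted rather than spelled out, whereas your proof is longer but fully explicit and self-contained, and the intermediate claim about the partial products $\pi_k$ carries extra information about the group elements themselves. One small remark: your closing observation that the sub-case $j=n$ "only arises within the first $r-1$ factors" is not actually needed -- your own computation $\pi_{k-1}(2e_{a_k})=2e_{a_k}>0$ shows appending $s_n$ at the end of the last block would also increase length (consistent with Lemma~4.4 of the paper, where the full block $[a_r,n]$ appears in reduced expressions); the last block stops at $s_{n-1}$ simply because that is the element being considered, not because the argument would otherwise break.
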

\begin{proof}
 If $r=1$, then $w=\prod\limits_{j=a_1}^{n-1} s_j$ and clearly it is a reduced expression of $w$.
  Let $$w_1=(\prod_{j=a_1}^n s_j)(\prod_{j=a_2}^{n} s_j)\cdots (\prod_{j=a_{r-2}}^{n} s_j)(\prod_{j=a_{r-1}}^{n-1} s_j) 
 ~~\mbox{and}~~ w_2=s_n(\prod_{j=a_r}^{n-1} s_j).$$ Then, we have  $w=w_1w_2$. 
  Now, we will prove that the expression $$(\prod_{j=a_1}^n s_j)(\prod_{j=a_2}^n s_j)(\prod_{j=a_3}^{n} s_j)\cdots (\prod_{j=a_{r-1}}^{n} s_j)(\prod_{j=a_r}^{n-1} s_j)$$ of $w_1w_2$ is reduced.

By induction on $r$, $w_1=(\prod\limits_{j=a_1}^n s_j)(\prod\limits_{j=a_2}^{n} s_j)\cdots (\prod\limits_{j=a_{r-1}}^{n-1} s_j)$ is a reduced expression. 
Note that, 
since $G$ is of type $C_n$, 
$$R^+(w_2^{-1})=\{\alpha_n, \sum\limits _{j=a_r}^{n}\alpha_j \}\cup \{\sum\limits_{j=a_r}^{m}\alpha_{j}: a_r\leq m\leq n-2\}.$$
%$R^-(w_1)=\{\beta\in R^+ : w_1(\beta)<0\}$ (\Leftrightarrow w_1s_{\beta}<w_1). Note that R^+(w_2)\cap R^-(w_1)=\emptyset. 
Since $n \geq a_1>a_2>a_3>\ldots >a_{r}\geq 1$,
it follows that $$R^+(w_1)\cap R^+(w_2^{-1})=\emptyset .$$ Thus we have $l(w)=l(w_1)+l(w_2)$. 
Hence  $$w=(\prod_{j=a_1}^n s_j)(\prod_{j=a_2}^n s_j)(\prod_{j=a_3}^{n} s_j)\cdots (\prod_{j=a_{r-1}}^{n} s_j)(\prod_{j=a_r}^{n-1} s_j)$$ is a 
   reduced expression of $w$.
  This completes the proof of the lemma. 
 \end{proof}

Let $c$ be a Coxeter element in $W$.
We take a reduced expression  $c=[a_1, n][a_2, a_{1}-1]\cdots [a_k, a_{k-1}-1]$, 
where $[i, j]=s_is_{i+1}\cdots s_j$ for $i\leq j$
and $n\geq a_1>a_2>\ldots >a_k=1$.

Then we have the following.
\begin{lemma} \label{coxeter}{\ }

\begin{enumerate}
 \item 
 For all $1\leq i\leq k-1$, $$c^i=(\prod_{l_1=1}^{i}[a_{l_1}, n])(\prod_{l_2=i+1}^{k}[a_{l_2}, a_{l_2-i}-1])(\prod_{l_3=1}^{i-1}[a_{k}, a_{k-i+l_3}-1]).$$
%(1) $c^i=[a_1, n][a_2, n]\cdots [a_i, n][a_{i+1}, a_1-1][a_{i+2}, a_2-1]\cdots [a_{k-2}, a_{k-(i+2)}-1][a_{k-1}, a_{k-(i+1)}-1][a_k, a_{k-i}-1][a_k, a_{k-i+1}-1]\cdots [a_k, a_{k-1}-1]$ 
 
 \item For all $k\leq j \leq n$,
 $$c^j=(\prod_{l_1=1}^{k-1}[a_{l_1}, n])([a_k, n]^{j+1-k})(\prod_{l_2=1}^{k-1}[a_{k}, a_{l_2}-1]).$$
\item The expressions of $c^i$ for $1\leq i\leq n$ as in $(1)$ and $(2)$ are reduced. 
\end{enumerate}
\end{lemma}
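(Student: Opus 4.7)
The plan is to prove (1) and (2) by induction on the exponent, and to deduce (3) from a length count.

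For (1), I would induct on $i$. The base case $i=1$ is the chosen reduced expression $c=[a_1,n][a_2,a_1-1]\cdots[a_k,a_{k-1}-1]$, with the third product being empty. For the inductive step, denote the hypothesized expression as $c^i=A_i B_i C_i$ with the three groups as in the statement, and compute $c^i\cdot c$. The key observation is that because $a_1>a_2>\cdots>a_k=1$, the group $C_i$ and the tail $B_i^{\mathrm{rest}}:=\prod_{l_2=i+2}^{k}[a_{l_2},a_{l_2-i}-1]$ of $B_i$ use only simple reflections of index $\leq a_2-1$, strictly less than $a_1-1$. Therefore the leading block $[a_1,n]$ of the appended $c$ commutes past both $B_i^{\mathrm{rest}}$ and $C_i$ via the ordinary type-$A$ relations $s_l s_m = s_m s_l$ for $|l-m|\geq 2$; the length-four braid at $\{n-1,n\}$ never triggers since the indices in $C_i$ and $B_i^{\mathrm{rest}}$ are bounded by $a_2-1\leq n-2$. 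After this slide, $[a_1,n]$ lands directly to the right of $[a_{i+1},a_1-1]$ (the leading factor of $B_i$), and concatenates with it to form $[a_{i+1},n]$, which joins $A_i$ to produce $A_{i+1}$. The remaining trailing letters $R=[a_2,a_1-1]\cdots[a_k,a_{k-1}-1]$ are then distributed by the same kind of move: each factor $[a_j,a_{j-1}-1]$ with $j\in\{2,\ldots,k-i\}$ is slid leftward past the commuting portion of $C_i$ and concatenated with $[a_{j+i},a_j-1]$ from $B_i^{\mathrm{rest}}$ to yield $[a_{j+i},a_{j-1}-1]$, the corresponding factor of $B_{i+1}$; the remaining factors of $R$ combine analogously with $C_i$ to produce $C_{i+1}$.

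For (2), the same scheme applies past exponent $k-1$: the group $B$ has then emptied, the third group continues to grow, and each further multiplication by $c$ inserts one more copy of $[a_k,n]=[1,n]$ in the middle. The base case $j=k$ comes from multiplying the $i=k-1$ case of (1) by $c$; the inductive step for $j\geq k$ is entirely analogous and slightly cleaner due to the absent $B$-part.

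For (3), combine a length count with Lemma \ref{lemma: 1}(1). Submultiplicativity gives $l(c^{i+1})\leq l(c^i)+l(c)=l(c^i)+n$, while $l(c^n)=l(w_0)=n^2$; these force $l(c^i)=in$ for every $1\leq i\leq n$. Counting letters in the displayed word of (1) yields
\[
\sum_{l_1=1}^{i}(n+1-a_{l_1})+\sum_{l_2=i+1}^{k}(a_{l_2-i}-a_{l_2})+\sum_{l_3=1}^{i-1}(a_{k-i+l_3}-1),
\]
which telescopes (using $a_k=1$) to $in$; the analogous count for (2) gives $jn$. Since the word length matches the length of the element, the word is a reduced expression.

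The main obstacle is the careful bookkeeping in the inductive step of (1): several factors must be shifted through many others in a prescribed order. What makes this tractable is the strict monotonicity $a_1>a_2>\cdots>a_k=1$, which guarantees that each shift involves reflections whose indices differ by at least $2$, so only the straightforward type-$A$ commutation relations are needed and the special $C_n$ braid at $\{n-1,n\}$ never enters.
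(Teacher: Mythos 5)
Your proposal is correct and follows essentially the same route as the paper: parts (1) and (2) are proved by induction on the exponent, sliding the blocks of the appended copy of $c$ leftward using only the commutations guaranteed by $a_1>a_2>\cdots>a_k=1$, and part (3) is a letter count against $l(c^i)=in$. The only (harmless) variation is that you justify $l(c^i)=in$ yourself via subadditivity and $l(c^n)=l(w_0)=n^2$, where the paper simply asserts this fact and cites Lemma \ref{reduced}.
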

 \begin{proof}
Proof of (1) is by induction on $i$ and by commuting relation in the Weyl group.
Assume that for a fixed $1\leq i\leq k-2$, 
$$c^i=(\prod_{l_1=1}^{i}[a_{l_1}, n])(\prod_{l_2=i+1}^{k}[a_{l_2}, a_{l_2-i}-1])(\prod_{l_3=1}^{i-1}[a_{k}, a_{k-i+l_3}-1]).$$
Now we will prove, $$c^{i+1}=(\prod_{l_1=1}^{i+1}[a_{l_1}, n])(\prod_{l_2=i+2}^{k}[a_{l_2}, a_{l_2-(i+1)}-1])(\prod_{l_3=1}^{i}[a_{k}, a_{k-(i+1)+l_3}-1]).$$

Let $w_1:=\prod\limits_{l_1=1}^{i}[a_{l_1}, n], w_2:= \prod\limits_{l_2=i+1}^{k}[a_{l_2}, a_{l_2-i}-1]$ and $w_3:=\prod\limits_{l_3=1}^{i-1}[a_{k}, a_{k-i+l_3}-1]$.
Let $w_1':=\prod\limits_{l_1=1}^{i+1}[a_{l_1}, n], w_2':= \prod\limits_{l_2=i+2}^{k}[a_{l_2}, a_{l_2-(i+1)}-1]$ and 
$w_3':=\prod\limits_{l_3=1}^{i}[a_{k}, a_{k-(i+1)+l_3}-1]$.

Let $w_{2, 1}:=[a_{i+1} , a_1 -1]$, and let $v_1:=[a_1,  n]$.  Therefore, we have 
$w_{1}'=w_{1}w_{2, 1}v_{1}$.  Further, let  $w_{2, j}:=[a_{i+j} , a_j -1]$   ( $2\leq j \leq  k-i $ ).
Also, let  $w_{3, j}:=[a_{k} , a_j -1]$   ( $2\leq j \leq  k-i $ ), and  $v_j:=[a_j,  a_{j-1} -1]$  ( $2\leq j \leq  k$ ).

Now look at $c^i\cdot c=w_1w_2w_3v_1\cdots v_k$.  It is easy to derive the  following commuting relations  : 

\begin{itemize}
\item $v_1$ commutes with  each $w_{2, j}$ and $w_{3, j}$  ( $2\leq j \leq  k-i $ ).
\item For each $2\leq j \leq k-i$, $v_j$ commutes with  $w_{2, l}$  for each $j+1\leq l \leq  k-i$
and $w_{3, l}$  for each ( $k+1-i\leq l \leq  k-1 $ ).
\end{itemize}

Using these relations, we see that $$c^{i}c=( w_{1}w_{2, 1}v_{1} )  ( \prod\limits_{j=2}^{k-i}w_{2, j}v_{j})
( \prod\limits_{j=k+1-i}^{k-1}w_{3, j}v_{j})v_{k}.$$

Now, the statement (1) can be  obtained from the following facts:

\begin{itemize}
\item $w_{1}'=w_{1}w_{2, 1}v_{1}$,
\item $w_{2, j}v_{j}=[a_{i+j}, a_{j-1} -1]$ for each $2\leq j \leq k-i$ and 
\item $w_{3, j}v_{j}= [a_{k}, a_{j-1}-1]$ for each $k+1-i \leq j \leq k-1$.
\end{itemize}

Proof of $(2)$ is similar to the proof of (1).
Proof of $(3)$ follows from the fact that $l(c^i)=in$ for all $1\leq i\leq n$ and Lemma \ref{reduced}.
\end{proof}

\begin{example}
 Let $G$ is of type $C_3$. In this case we have the following four Coxeter elements;
 \begin{itemize}
  \item $c_1=s_1s_2s_3$; with $k=1; a_1=1.$
  \item $c_2=s_3s_1s_2$; with $k=2; a_1=3, a_2=1.$
  \item $c_3=s_2s_3s_1$; with $k=2; a_1=2,a_2=1.$
  \item $c_4=s_3s_2s_1$; with $k=3; a_1=3, a_2=2, a_3=1$.
  \end{itemize} Note that $h(i, c_j)=3$ for all $1\leq i \leq 3$ and $1\leq j \leq 4$. 
  
  The reduced expressions of $c_1^j$  ( $1\leq j \leq 3$ ) appearing in Lemma 4.4 are:
 \begin{itemize}
  \item $c_1=s_1s_2s_3.$
  \item $c_1^2=(s_1s_2s_3)(s_1s_2s_3).$
  \item $c_1^3=(s_1s_2s_3)(s_1s_2s_3)(s_1s_2s_3).$
  \end{itemize}

The reduced  expressions of $c_2^j$  ( $1\leq j \leq 3$ ) appearing in 
 Lemma 4.4  are :
\begin{itemize}
  \item $c_2=s_3s_1s_2$
  \item $c_2^2=s_3(s_1s_2) (s_3s_1s_2)$
  \item $c_2^3=s_3(s_1s_2s_3)(s_1s_2s_3)(s_1s_2)$
  
   \end{itemize}

\end{example}

 \section{Cohomology module $H^0$ of the relative tangent bundle} 
In this section we describe the weights of $H^0$ of the relative tangent bundle.

For a $B$-module $V$ and a character $\mu\in X(T)$, we denote by $V_{\mu}$ the space of all 
vectors $v$ in 
$V$ such that $t\cdot v=\mu(t)v$ for all $t \in T$.   
Let $R_s$ (respectively, $R_s^-$) be the set of all short roots (respectively, negative short roots).

\begin{lemma}\label{lambda}
 Let $w\in W$ and  let $V$ be a $B$-module such that $V_{\mu}=0$ unless $\mu\in R^-_s\setminus (-S)$. Then  $H^0(w, V)_{\mu}=0$ unless $\mu\in R^-_s\setminus (-S)$.
 \end{lemma}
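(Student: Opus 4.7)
The plan is to prove the lemma by induction on $\ell(w)$. The base case $w=e$ is immediate, since $H^0(e,V)=V$ and the hypothesis puts its weights in $R_s^-\setminus(-S)$. For the inductive step, write $w=s_\gamma w'$ with $\gamma\in S$ and $\ell(w')=\ell(w)-1$; then SES~(1) gives
\[
 H^0(w,V)\simeq H^0(s_\gamma,H^0(w',V)).
\]
By the inductive hypothesis, $V':=H^0(w',V)$ has all its $T$-weights in $R_s^-\setminus(-S)$, so the problem reduces to showing the same statement for $H^0(s_\gamma,V')$ with $V'$ in place of $V$ and $w$ replaced by the single simple reflection $s_\gamma$.

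Next, I would decompose $V'$ as a $B_\gamma$-module via Krull--Schmidt together with Lemma~\ref{lemma2.4}, writing $V'=\bigoplus_j (V'_j\otimes\mathbb{C}_{\lambda_j})$ where each $V'_j$ is an irreducible $\widetilde{L}_\gamma$-module with highest weight $\nu'_j$ and $\lambda_j$ is a character of $\widetilde{B}_\gamma$. By Lemma~\ref{lemma2.3}, the summand $H^0(s_\gamma,V'_j\otimes\mathbb{C}_{\lambda_j})$ vanishes whenever $\langle\lambda_j,\gamma\rangle\le-1$, and for $\langle\lambda_j,\gamma\rangle\ge 0$ it is isomorphic to $V'_j\otimes H^0(s_\gamma,\mathbb{C}_{\lambda_j})$. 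Expanding the tensor product using the composition series in Lemma~\ref{dem1}, the set of $T$-weights of $H^0(s_\gamma,V'_j\otimes\mathbb{C}_{\lambda_j})$ is exactly $\{\beta_j-m\gamma:0\le m\le\langle\beta_j,\gamma\rangle\}$, where $\beta_j:=\nu'_j+\lambda_j$ is the highest $\gamma$-weight of the summand and hence is itself a weight of $V'$; consequently $\beta_j\in R_s^-\setminus(-S)$ by hypothesis.

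The problem thus boils down to the following root-theoretic check: for every $\beta\in R_s^-\setminus(-S)$ and every simple $\gamma$ with $\langle\beta,\gamma\rangle\ge 0$, each weight $\beta-m\gamma$ with $0\le m\le\langle\beta,\gamma\rangle$ lies in $R_s^-\setminus(-S)$. In type $C_n$, the hypotheses on $\beta$ force $\beta\neq\pm\gamma$ (since $\beta\in R^-$ rules out $\beta=\gamma$ and $\beta\notin -S$ rules out $\beta=-\gamma$), and for a short root $\beta$ and a simple $\gamma$ with $\beta\neq\pm\gamma$ the pairing $\langle\beta,\gamma\rangle$ lies in $\{-1,0,1\}$. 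Therefore only the case $\langle\beta,\gamma\rangle=1$ is non-trivial, and the only new weight to verify is $s_\gamma(\beta)=\beta-\gamma$. This root is short (because $s_\gamma$ preserves root lengths) and negative (because $s_\gamma$ permutes $R^-\setminus\{-\gamma\}$ among itself). The main obstacle, and the only delicate part, is to exclude $s_\gamma(\beta)=-\alpha_k$ for some simple $\alpha_k$: if this held then $\beta=s_\gamma(-\alpha_k)$ and the identity $\langle s_\gamma\mu,\gamma\rangle=-\langle\mu,\gamma\rangle$ yields $\langle\beta,\gamma\rangle=\langle\alpha_k,\gamma\rangle$, which is non-positive for $\alpha_k\neq\gamma$ (off-diagonal Cartan entries in type $C_n$ are $\le 0$) while $\alpha_k=\gamma$ would force $\beta=\gamma$ to be positive; either alternative contradicts $\langle\beta,\gamma\rangle=1$. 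Hence $s_\gamma(\beta)\in R_s^-\setminus(-S)$, completing the induction.
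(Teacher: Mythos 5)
Your proof is correct, but it takes a genuinely different route from the paper. The paper's proof of Lemma \ref{lambda} is essentially a citation: it fixes a simple root $\alpha$, sets $\lambda_0=-\alpha$, invokes Lemma 4.1(1) of \cite{CKP} to conclude that every weight $\mu$ of $H^0(w,V)$ lies in $R_s^-$ with $\mu<-\alpha$, and then lets $\alpha$ range over all of $S$ to exclude the negative simple roots. You instead give a self-contained induction on $\ell(w)$: the SES reduces everything to the rank-one case, Lemma \ref{lemma2.4} (via Krull--Schmidt) decomposes $H^0(w',V)$ into indecomposable $B_\gamma$-summands $V_j'\otimes\mathbb{C}_{\lambda_j}$, Lemma \ref{lemma2.3} determines which summands survive $H^0(s_\gamma,-)$ and identifies the result as a $\gamma$-string $\{\beta_j-m\gamma:0\le m\le\langle\beta_j,\gamma\rangle\}$ hanging from a weight $\beta_j$ of $H^0(w',V)$, and the rest is the root-theoretic check. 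That check is sound: in type $C_n$ a short root $\beta\neq\pm\gamma$ has $\langle\beta,\gamma\rangle\in\{-1,0,1\}$, so only $s_\gamma(\beta)$ is new, and your computation $\langle s_\gamma(-\alpha_k),\gamma\rangle=\langle\alpha_k,\gamma\rangle\le 0$ correctly rules out $s_\gamma(\beta)\in -S$. (You do implicitly use that $H^0(s_\gamma,-)$ depends only on the $B_\gamma$-module structure, but the paper relies on the same fact throughout, e.g.\ in the proof of Lemma \ref{Weights}.) What your route buys is independence from the external reference \cite{CKP} and an argument valid verbatim for the single Weyl group element $s_\gamma$ at each step; what it costs is length, since it re-derives by hand the weight bookkeeping that \cite[Lemma 4.1]{CKP} packages once and for all.
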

\begin{proof}
Fix $\alpha\in S$ and let $\lambda_0=-\alpha$. Then by \cite[Lemma 4.1 (1)]{CKP}, 
$H^0(w, V)_{\mu}= 0$ unless $\mu\in R^-_s$ and  $\mu<-\alpha$.
Since $\alpha\in S$ is arbitrary, we have $H^0(w, V)_{\mu}=0$ unless $\mu\in R^-_s\setminus (-S)$.
\end{proof} 

\begin{lemma}\label{H0}
Fix $1\leq j\leq n-2$.
 \begin{enumerate}
  \item 
  $H^0(s_js_{j+1}\cdots s_{n-1}, \alpha_{n-1})_{\mu}\neq 0$ if and only if  either  $\mu=0$, or $\mu=s_ts_{t+1}\cdots s_{n-1}(\alpha_{n-1})$ for some $j\leq t\leq n-1$.
  \item $H^0(s_ns_{j}s_{j+1}\cdots s_{n-1}, \alpha_{n-1})_{\mu}\neq 0$ if and only if  either  
  $\mu=0$, or $\mu=vs_ts_{t+1}\cdots s_{n-1}(\alpha_{n-1})$ for some $j\leq t\leq n-1$ and 
  $v\in \{id, s_n\}$.
  \item Let $a_1, a_2$ be two integers such that $n\geq a_1 > a_2 \geq 1$.
  Then, $H^0(s_{a_1}\cdots s_{n}s_{a_2}\cdots s_{n-1}, \alpha_{n-1})_{\mu}\neq 0$ if and only if 
  $\mu=s_{t}\cdots s_{n-1}s_{n}s_{j}\cdots s_{n-1}(\alpha_{n-1})$ for some  $a_{2}\leq j \leq n-1$ and $a_1\leq t \leq n$  such that
  $t>j$.
 \end{enumerate}
\end{lemma}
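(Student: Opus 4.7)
My plan is to prove all three assertions by induction, the inductive engine being the short exact sequence
$$H^{0}(w, V)\simeq H^{0}(s_{\gamma}, H^{0}(s_{\gamma}w, V))$$
from the preliminaries, applied so as to peel off one simple reflection on the left at a time. At each step I would decompose the inner $B$-module as a direct sum of $B_{\alpha}$-characters (using the fact that its weights do not differ from one another by $-\alpha$, so that $U_{-\alpha}$ acts trivially and Lemma \ref{lemma2.4} applies in its simplest form) and then apply Lemma \ref{lemma2.3} summand by summand, tracking the pairings $\langle \mu, \alpha^{\vee}\rangle$.

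For part (1), I would use descending induction on $j$, with base case $j = n-2$. At the base, $H^{0}(s_{n-1}, \alpha_{n-1})$ is the three-dimensional adjoint $L_{\alpha_{n-1}}$-representation, and one further application of Lemma \ref{lemma2.3} with $\alpha_{n-2}$ gives the result. For the inductive step I split off $s_{j}$ and check, from the weights listed by the induction hypothesis, that shifting any of them by $-\alpha_{j}$ does not land in the weight set; hence the inner module is a sum of $B_{\alpha_{j}}$-characters, and Lemma \ref{lemma2.3} produces exactly the weights corresponding to $t=j$ together with those inherited from $j+1$. For part (2), I strip off $s_{n}$ from the left using the SES, with inner module given by (1). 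The new feature is that $\alpha_{n}$ is the long simple root, so the relevant pairings $\langle \mu, \alpha_{n}^{\vee}\rangle$ are nonnegative on the weights coming from (1) and Lemma \ref{lemma2.3} adjoins the weight $s_{n}(\mu)$, accounting for the factor $v\in\{\mathrm{id}, s_{n}\}$. For part (3), I split the word into $s_{a_{1}}\cdots s_{n}$ and $s_{a_{2}}\cdots s_{n-1}$, apply (2) to the right half prefixed by $s_{n}$, and then iteratively peel off $s_{n-1}, s_{n-2}, \ldots, s_{a_{1}}$ from the left, at each stage verifying the sum-of-characters decomposition for the current $B_{\alpha_{i}}$ and invoking Lemma \ref{lemma2.3}.

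The principal obstacle is the bookkeeping in part (3): at each SES application one must confirm that no two weights of the intermediate module differ by the simple root $\alpha_{i}$ currently in use, so that Lemma \ref{lemma2.3} applies termwise, and one must then match each newly produced weight to the claimed parameterization by a pair $(t, j)$ with $t > j$. The inequality $t > j$ should emerge precisely as the condition for such a pair to be consistent with the reduced expression of the word: only for $t > j$ does the corresponding composite reflection pattern arise from a legitimate sequence of $B_{\alpha_{i}}$-decompositions, while pairs with $t \leq j$ correspond to weight shifts that either cancel via Lemma \ref{lemma2.3}(3) (when $\langle \mu, \alpha_{i}^{\vee}\rangle = -1$) or collapse into previously accounted-for summands. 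A careful case analysis of these pairings, together with Corollary \ref{short} applied to the short roots that appear, should close the argument.
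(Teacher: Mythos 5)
Your treatment of parts (1) and (2) is essentially the paper's own argument: peel off one simple reflection at a time via $H^0(w,V)\simeq H^0(s_\gamma,H^0(s_\gamma w,V))$, check that the intermediate module splits into $B_\alpha$-characters, and apply Lemma \ref{dem1} and Lemma \ref{lemma2.3} termwise. For those two parts the splitting hypothesis does hold (no two weights of the intermediate module differ by a multiple of the simple root being peeled off), and your bookkeeping reproduces the stated weight sets.

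The gap is in part (3), and it occurs at the very first new step. After computing $H^0(s_ns_{a_2}\cdots s_{n-1},\alpha_{n-1})$ by part (2), the next reflection to apply is $s_{n-1}$, and the weight set now contains both $0$ and $-\alpha_{n-1}$ (the latter being the $t=n-1$, $v=\mathrm{id}$ weight from (2)). These differ by $\alpha_{n-1}$, so your standing hypothesis that the module is a direct sum of $B_{\alpha_{n-1}}$-characters fails: $\mathbb{C}h_{\alpha_{n-1}}\oplus\mathbb{C}_{-\alpha_{n-1}}$ is a two-dimensional indecomposable $B_{\alpha_{n-1}}$-summand, isomorphic by Lemma \ref{lemma2.4} to $V\otimes\mathbb{C}_{-\omega_{n-1}}$ with $V$ the two-dimensional irreducible $\widetilde{L}_{\alpha_{n-1}}$-module, and by Lemma \ref{lemma2.3} its $H^0(s_{n-1},-)$ vanishes. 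This annihilation is the only mechanism that removes the weight $\mu=0$ from the answer in (3) --- note that (3), unlike (1) and (2), has no $\mu=0$ term --- and your proposed fallbacks cannot produce it: $\langle 0,\alpha_{n-1}^\vee\rangle=0$, so Lemma \ref{lemma2.3}(3) does not apply, and if $\mathbb{C}_0$ were a standalone character it would survive every subsequent $H^0(s_i,-)$ and wrongly persist into the final answer. You need the explicit identification and killing of this indecomposable summand (this is precisely Step 1 of the paper's proof of Lemma \ref{Weights}, where the evaluation map $ev_3$ is shown to vanish for exactly this reason). The weights $-(\alpha_t+\cdots+\alpha_{n-1})$ with $t\le n-2$ are then eliminated by the pairing $-1$ with $\alpha_{n-1}$, as you say, and the remainder of your peeling argument for the outer factor $s_{a_1}\cdots s_{n-2}$ goes through.
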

\begin{proof}
Proof of (1):
 If $j=n-2$, then by SES we have $$H^0(s_{n-2}s_{n-1}, \alpha_{n-1})= H^0(s_{n-2}, H^0(s_{n-1}, \alpha_{n-1})).$$
 Note that $$H^0(s_{n-1}, \alpha_{n-1})=sl_{2, \alpha_{n-1}}=\mathbb C_{\alpha_{n-1}}
 \oplus \mathbb Ch_{\alpha_{n-1}}\oplus \mathbb C_{-\alpha_{n-1}},$$ where $h_{\alpha_{n-1}}$ is a zero weight vector in $sl_{2, \alpha_{n-1}}$.
 Since $\langle \alpha_{n-1}, \alpha_{n-2} \rangle=-1$,
 by Lemma \ref{lemma2.2}, we see that
 $$H^0(s_{n-2}s_{n-1}, \alpha_{n-1})= \mathbb Ch_{\alpha_{n-1}}\oplus \mathbb C_{-\alpha_{n-1}} \oplus \mathbb C_{-(\alpha_{n-1}+\alpha_{n-2})}.$$
 
 We prove the result by recursion.
  Now assume that the statement holds for $H^0(s_{l+1}\cdots s_{n-1}, \alpha_{n-1})$ and $j \leq l \leq n-3$.
 Then, for any $\mu\in X(T)$ such that $H^0(s_{l+1}\cdots s_{n-1}, \alpha_{n-1})_{\mu}\neq 0$, we have
 $\langle \mu, \alpha_j \rangle \in \{0, 1\}$.
 Thus, by Lemma \ref{dem1} and by using $SES$, we see that 
 $H^0(s_ls_{l+1}\cdots s_{n-1}, \alpha_{n-1})_{\mu}\neq 0$ if and only if  
 either  $\mu=0$, or $\mu=s_ts_{t+1}\cdots s_{n-1}(\alpha_{n-1})$ for some $l\leq t\leq n-1$.
   Therefore, proof of (1) follows by recursion.

   Proof of (2): By $SES$, we have $$H^0(s_ns_{j}s_{j+1}\cdots s_{n-1}, \alpha_{n-1})=H^0(s_n, H^0(s_{j}s_{j+1}\cdots s_{n-1}, \alpha_{n-1})).$$ 
   Note that $\langle s_t\cdots s_{n-1}(\alpha_{n-1}), \alpha_{n} \rangle=1$ for all $j\leq t \leq n-1$. 
 Now the proof of (2) follows from Lemma \ref{dem1} and (1).
  
  Proof of (3) is similar to the proof of (1) and (2).
  \end{proof}
  
Now onwards we fix the following notation:

Let $c$ be a Coxeter element in $W$.
We take a reduced expression  $c=[a_1, n][a_2, a_{1}-1]\cdots [a_k, a_{k-1}-1]$, 
where $[i, j]=s_is_{i+1}\cdots s_j$ for $i\leq j$  
and $n\geq a_1>a_2>\ldots >a_k=1$.

Fix $1\leq r \leq k$.
Let $n \geq a_1>a_2>a_3>\ldots >a_{r}\geq 1$ be a decreasing sequence of integers.\\
Let $$w_r=(\prod\limits_{j=a_1}^n s_j)(\prod\limits_{j=a_2}^n s_j)(\prod\limits_{j=a_3}^{n} s_j)\cdots (\prod\limits_{j=a_{r-1}}^{n} s_j)(\prod\limits_{j=a_r}^{n} s_j)$$ and
let $$\tau_r=(\prod\limits_{j=a_1}^n s_j)(\prod\limits_{j=a_2}^n s_j)(\prod\limits_{j=a_3}^{n} s_j)\cdots (\prod\limits_{j=a_{r-1}}^{n} s_j)(\prod\limits_{j=a_r}^{n-1} s_j).$$
Note that $l(w_r)=l(\tau_r)+1$ and $c\tau_k=w_k[a_k, n-1]$.

The following lemma describes the weights of $H^0$ of the relative tangent bundles.
\begin{lemma}\label{Weights}

Let $3\leq r\leq k$ . Then, we have
\begin{enumerate}
 \item
$H^0(\tau_r, \alpha_{n-1})_{\mu}\neq 0$ if and only if $\mu= (\prod\limits_{j=t_1}^n s_j)(\prod\limits_{j=t_2}^{n-1} s_j)(\alpha_{n-1})$ 
for some $a_{r}\leq t_2< t_1$ and $a_{r-1}\leq t_1\leq a_{r-2}-1$. 
In such a case, $dim(H^0(\tau_r, \alpha_{n-1})_{\mu})=1$.
\item $H^0(\tau_r, \alpha_{n-1})$ is a cyclic $B$-module generated by a weight vector of weight
$\mu=(\prod\limits_{j=a_{r-2}-1}^{n}s_j)(\prod\limits_{j=a_{r-2}-2}^{n}s_j)(\alpha_{n-1})$.
\end{enumerate} 
\end{lemma}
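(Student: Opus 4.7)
I would prove Lemma~\ref{Weights} by induction on $r \geq 3$, exploiting the factorization
$$\tau_r = [a_1, n] \cdot \tau_r', \quad \text{where} \quad \tau_r' := [a_2, n][a_3, n] \cdots [a_{r-1}, n][a_r, n-1].$$
The factor $\tau_r'$ has the same shape as $\tau_{r-1}$ but attached to the shifted decreasing sequence $(a_2, a_3, \ldots, a_r)$. Iterating the short exact sequence (1) from Section~2 along the simple reflections $s_{a_1}, s_{a_1 + 1}, \ldots, s_n$ of $[a_1, n]$ yields
$$H^0(\tau_r, \alpha_{n-1}) \simeq H^0\bigl([a_1, n], H^0(\tau_r', \alpha_{n-1})\bigr),$$
and the argument reduces to controlling the action of $H^0([a_1, n], -)$ on the inner module.

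In the base case $r = 3$, the inner module $H^0(\tau_r', \alpha_{n-1}) = H^0([a_2, n][a_3, n-1], \alpha_{n-1})$ is exactly the cohomology treated in Lemma~\ref{H0}(3). A direct computation in the type $C_n$ root basis $\{e_1, \ldots, e_n\}$ shows each weight is of the form $\mu_{t_1, t_2} = -(e_{t_2} + e_{t_1})$ (with the convention for $t_1 = n$ included), and each weight space is one-dimensional. I would then compute $H^0([a_1, n], -)$ via the Demazure operator $D_{[a_1, n]} = D_{s_{a_1}} \circ \cdots \circ D_{s_n}$: a pairing computation using the Cartan matrix of $C_n$ shows that (i) for $t_1 \in [a_1, n]$ one has $\langle \mu_{t_1, t_2}, \alpha_{t_1}^{\vee}\rangle = -1$, so $D_{s_{t_1}}$ kills the weight, whereas (ii) for $t_1 \in [a_2, a_1 - 1]$ one has $\langle \mu_{t_1, t_2}, \alpha_j^{\vee}\rangle = 0$ for every $j \in [a_1, n]$, so $D_{[a_1, n]}$ acts as the identity on these weight spaces. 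This yields the weight description in part~(1) for $r = 3$.

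For the inductive step $r \geq 4$, the inductive hypothesis applied to $\tau_r'$ supplies the weight description $\mu_{t_1, t_2} = -(e_{t_2} + e_{t_1})$ with $a_r \leq t_2 < t_1$ and $a_{r-1} \leq t_1 \leq a_{r-2} - 1$, each one-dimensional. Since $t_1 \leq a_{r-2} - 1 < a_1$, the same orthogonality $\langle \mu_{t_1, t_2}, \alpha_j^{\vee}\rangle = 0$ for $j \in [a_1, n]$ holds. Combining Lemma~\ref{lemma2.4} to decompose $H^0(\tau_r', \alpha_{n-1})$ into indecomposable $B_{\alpha_j}$-summands and Lemma~\ref{lemma2.3} on each summand, the orthogonality collapses each Demazure filtration from Lemma~\ref{dem1}(3) to a single term, so $H^0(s_{\alpha_j}, -)$ acts as the identity for every $j \in [a_1, n]$. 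Hence $H^0([a_1, n], -)$ is the identity on this module, and $H^0(\tau_r, \alpha_{n-1}) \simeq H^0(\tau_r', \alpha_{n-1})$ as $B$-modules. Part~(1) transfers immediately, and part~(2) follows by inheriting cyclicity: the generator $\mu_0 = -(e_{a_{r-2}-2} + e_{a_{r-2}-1})$ is identified as the unique maximal weight in the usual partial order, and $\mu_0 - \mu$ is a non-negative integer combination of positive roots for every other weight $\mu$ in the claim.

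The principal obstacle is preserving the full $B$-module structure (not merely the weight multiset) under the functor $H^0([a_1, n], -)$. While the pairing computations for the Demazure operator are mechanical, establishing an isomorphism of $B$-modules requires unpacking each indecomposable $B_{\alpha_j}$-summand via Lemma~\ref{lemma2.4} and verifying the triviality of the corresponding $\widetilde L_{\alpha_j}$-representation, uniformly for all $j \in [a_1, n]$. As a minor notational point, the second product in the statement of part~(2) should terminate at $n - 1$ to be consistent with part~(1); otherwise the weight simplifies to a positive simple root, contradicting Lemma~\ref{lambda}.
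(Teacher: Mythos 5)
Your outline is in substance the paper's own argument, repackaged as an induction on $r$. Unrolled, your induction says: the two rightmost factors $[a_{r-1},n][a_r,n-1]$ produce the weights $-(e_{t_1}+e_{t_2})$ over the full range (you delegate this to Lemma \ref{H0}(3); the paper re-derives it as Step~1 via evaluation maps and the embedding into $H^0(w',\beta_0)$, which is also where the multiplicity-one claim comes from -- you assert it but don't source it); the next factor to the left kills exactly the weights with $t_1$ in its interval (your base case, the paper's Step~2); and every factor further left acts as the identity because the surviving weights are orthogonal to all $\alpha_j$ with $j\geq a_1$ (your inductive step, the paper's Step~3). Your coordinate computations and the observation that the weight printed in part~(2) should have its second product stop at $n-1$ are both correct.

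The one step that does not stand as written is the base-case assertion that $\langle \mu_{t_1,t_2},\alpha_{t_1}^{\vee}\rangle=-1$ ``so $D_{s_{t_1}}$ kills the weight.'' A pairing of $-1$ kills a weight only when $\mathbb C_{\mu}$ is an indecomposable $B_{\alpha_{t_1}}$-summand, and here there is a real threat: $\mu_{t_1,t_2}+\alpha_{t_1}=-(e_{t_1}+e_{t_2})+(e_{t_1}-e_{t_1+1})=\mu_{t_1+1,t_2}$ is itself a weight of the inner module, so $\mathbb C_{\mu_{t_1,t_2}}$ could a priori sit inside a two-dimensional indecomposable summand $V'\otimes\mathbb C_{\lambda}$ with $\langle\lambda,\alpha_{t_1}\rangle=0$, on which $H^0(s_{t_1},-)$ is the identity and nothing dies. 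The rescue is the order of application forced by $H^0([a_1,n],V)=H^0(s_{a_1},H^0(s_{a_1+1},\cdots H^0(s_n,V)))$: one must kill from $t_1=n$ downward and check at each stage that $\mu_{t_1+1,t_2}$ has already been annihilated by $D_{s_{t_1+1}}$, so that $\mathbb C_{\mu_{t_1,t_2}}$ really is a one-dimensional summand when $D_{s_{t_1}}$ arrives. This descending recursion with the summand check is precisely the paper's Step~2 and is the part of your sketch that must actually be written out; the obstacle you flag as principal (triviality of the $\widetilde L_{\alpha_j}$-representations in the identity-action step) is the easy half, since there every weight is orthogonal to every relevant $\alpha_j$ and Lemma \ref{lemma2.4} forces all summands to be trivial characters. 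Part~(2) also deserves more than ``inheriting cyclicity'': a unique maximal weight with one-dimensional weight spaces does not by itself make a $B$-module cyclic; the cyclicity has to be traced from the generator exhibited in the two-factor computation.
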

\begin{proof}
Note that there exists a $w_1\in W$ such that $w_1(\alpha_{n-1})=\beta_0$ and $l(\tau_rw_1^{-1})=l(\tau_r)+l(w_1)$, where $\beta_0$
is the highest short root. As a consequence, we have 
 $H^0(\tau_r, \alpha_{n-1})\subset H^0(\tau_r w_1^{-1}, \beta_0)$. Hence it follows 
 that for every non zero weight $\mu$ of $H^0(\tau_r, \alpha_{n-1})$, we have $dim(H^0(\tau_r, \alpha_{n-1})_{\mu})=1$ (see the proof of \cite[Lemma 4.4 ]{CKP}).

Proof of (1):
Let $u_1=s_{a_r}s_{a_r+1}\cdots s_{n-1}$, $u_2=s_{a_{r-1}}\cdots s_n$ and $u_3=s_{a_{r-2}}\cdots s_n$. 
 Let $w'=u_2u_1w_1^{-1}$.

\underline{Step 1:} We prove  $H^0(u_2u_1, \alpha_{n-1})_{\mu}\neq 0$ if and only if $\mu \in R^-_s$ and 
$\mu=v_1(\alpha_{n-1})$, where $ s_{n}s_{n-1} \leq v_1 \leq u_2u_1 $.

\underline{Claim:} $H^0(u_2u_1, \alpha_{n-1})$ is the $B$-submodule of $H^0(w', \beta_0)$ generated by $\mathbb C_{s_ns_{n-1}(\alpha_{n-1})}$.

Consider the following commutative  diagram of $B$-modules:
\begin{center}  
$\xymatrix{ H^0(s_{n-1}s_nu_1, \alpha_{n-1}) \ar[dr]^{ev_3} \ar[rr]^{ev_1} && H^0(s_nu_1, \alpha_{n-1}) \ar[dl]^{ev_2}\\ 
  & H^0(u_1, \alpha_{n-1})}$ 
\end{center}
By Lemma \ref{H0}(1),  if $H^0(u_1, \alpha_{n-1})_{\mu}\neq 0$, then $\mu=0$ or 
$\mu=s_js_{j+1}\cdots s_{n-1}(\alpha_{n-1})$ for some $a_r\leq j \leq n-1$.

First, observe that $\mathbb Ch_{\alpha_{n-1}}\oplus \mathbb C_{-\alpha_{n-1}}$ is an 
indecomposable $B_{\alpha_{n-1}}$-module (see \cite[p. 11]{CKP} and \cite[p. 8]{Ka4}).
By Lemma \ref{lemma2.4}, we have 
$$\mathbb Ch_{\alpha_{n-1}}\oplus \mathbb C_{-\alpha_{n-1}}=V\otimes \mathbb C_{-\omega_{n-1}},$$
where $V$ is the 2-dimensional irreducible representation of $\widetilde{L}_{\alpha_{n-1}}$.

 Note that  $\mathbb Ch_{\alpha_{n-1}}\oplus \mathbb C_{-\alpha_{n-1}}$ is an
 indecomposable $B_{\alpha_{n-1}}$-summand of $H^0(s_nu_1, \alpha_{n-1})$. By Lemma \ref{lemma2.3}, we have 
 
\begin{equation*}
  \begin{aligned}
       H^0(s_{n-1}, \mathbb Ch_{\alpha_{n-1}}\oplus \mathbb C_{-\alpha_{n-1}})  &= H^0(s_{n-1}, V\otimes \mathbb C_{-\omega_{n-1}})     \\
               &= V\otimes H^0(s_{n-1}, \mathbb C_{-\omega_{n-1}}) \\
               &= 0 
                 \end{aligned}
\end{equation*}

%$$H^0(s_{n-1}, \mathbb Ch_{\alpha_{n-1}}\oplus \mathbb C_{-\alpha_{n-1}})= H^0(s_{n-1}, V\otimes \mathbb C_{-\omega_{n-1}})=V\otimes H^0(s_{n-1}, \mathbb C_{-\omega_{n-1}})=0.$$
Also, note that  for each $\mu$ such that $H^0(u_1, \alpha_{n-1})_{\mu}\neq 0$ with $\mu \notin \{0, -\alpha_{n-1}\}$,
$\mathbb C_{\mu}$ 
is an  indecomposable $B_{\alpha_{n-1}}$-
summand of $H^0(s_nu_1, \alpha_{n-1})$. 
Since $\langle s_t \ldots s_{n-1}(\alpha_{n-1}), \alpha_{n-1} \rangle =-1$ for all $j\leq t \leq n-2$, using Lemma \ref{H0}(1),
 we see that the evaluation map $ev_3$ is zero.

Now consider the following commutative  diagram of $B$-modules.
\begin{center}  
$\xymatrix{ H^0(u_2u_1, \alpha_{n-1}) \ar[dr]^{ev} \ar[rr]^{ev'} && H^0(s_{n-1}s_nu_1, \alpha_{n-1}) \ar[dl]^{ev_3}\\ 
 & H^0(u_1, \alpha_{n-1})}$ 
\end{center} 

By the above arguments and commutativity of the above diagram, we see that the evaluation map $ev$
is zero.
 Note that $\langle s_ns_{n-1}(\alpha_{n-1}), \alpha_{n-1} \rangle =0$ and $\langle s_ns_{n-1}(\alpha_{n-1}), \alpha_{j} \rangle \geq 0$
 for $1\leq j\leq n-2$.
 Hence $\mathbb C_{s_ns_{n-1}(\alpha_{n-1})}$ is an indecomposable $B_{\alpha_{n-1}}$-summand of $H^0(s_nu_1, \alpha_{n-1})$.
 Therefore, $\mathbb C_{s_ns_{n-1}(\alpha_{n-1})}$ is in the image of the evaluation map 
 $$ev': H^0(u_2u_1, \alpha_{n-1})\longrightarrow H^0(s_{n-1}s_nu_1, \alpha_{n-1}).$$
 Hence, $s_{n}s_{n-1}(\alpha_{n-1})$ is the highest weight of $H^0(u_2u_1, \alpha_{n-1})$.
 Since $\beta_0$ is dominant weight, the restriction map of
 $B$-modules $$H^0(G/B, \mathcal L(\beta_0))=H^0(w_0, \beta_0)\longrightarrow H^0(w', \beta_0)$$ is surjecive.
 
 Since $H^0(u_2u_1, \alpha_{n-1})$ is $B$-submodule of $H^0(w', \beta_0)$, the multiplicity of $s_{n}s_{n-1}(\alpha_{n-1})$ in $H^0(u_2u_1, \alpha_{n-1})$ is one.
Also, note that the lowest weight of $H^0(u_2u_1, \alpha_{n-1})$ is $u_2u_1(\alpha_{n-1})$. 
Let $\{e_{\beta}: \beta \in R\}\cup \{h_{\alpha}: \alpha \in S \} $ be the 
Chevalley basis  for $\mathfrak{g}$, where $\mathfrak g$ is the Lie algebra of $G$ (refer to \cite[Chapter VII]{Hum1})).  
Hence we conclude that $$H^0(u_2u_1, \alpha_{n-1})=\bigoplus\limits_{a_r\leq t_2< t_1\leq n-1} \mathbb Cx_{t_1, t_2},$$ where 
$x_{t_1, t_2}=e_{-\alpha_{t_1}}\cdots e_{-\alpha_{n-1}}e_{-\alpha_{t_2}}\cdots e_{-\alpha_{n-2}}(y)$ for some non zero vector 
$y\in \mathbb C_{s_ns_{n-1}(\alpha_{n-1})}$. 

Therefore, $H^0(u_2u_1, \alpha_{n-1})$ is the $B$-submodule of $H^0(w', \beta_0)$ generated by $\mathbb C_{s_ns_{n-1}(\alpha_{n-1})}$.
This proves the claim. 

Now Step 1 follows from the claim.

\underline{Step 2:} 
Fix $a_{r-2} \leq j \leq n$.
$H^0(s_j\cdots s_nu_2u_1, \alpha_{n-1})_{\mu}\neq 0$ if and only if $\mu \in R^-_s$ 
and  $\mu=v_1(\alpha_{n-1})$, where $s_{n-1}s_{n}s_{n-2}s_{n-1} \leq v_1 \leq u_2u_1 $
 and $\langle \mu, \alpha_t \rangle =0$ for all $j\leq t \leq n$.

 Let $\mu_j=s_j\cdots s_{n-1}(\alpha_{n-1}), a_r\leq j \leq n-1$.
 Since $ev_{3}$ is zero, $H^0(u_2u_1, \alpha_{n-1})_{\mu_j}=0$. Further, we have $H^0(u_2u_1, \alpha_{n-1})_{s_n(\mu_j)}=\mathbb C_{s_n(\mu_j)}$ (by Step 1).
Since $\langle s_n(\mu_j), \alpha_n \rangle =-1$, by Lemma \ref{lemma2.2}, $\mathbb C_{s_n(\mu_j)}$
 is not in the image of the evaluation map $$ev: H^0(s_nu_2u_1, \alpha_{n-1})\to H^0(u_2u_1, \alpha_{n-1}).$$
 In particular, we have $$H^0(s_nu_2u_1, \alpha_{n-1})_{\mu_j}=H^0(s_nu_2u_1, \alpha_{n-1})_{s_n(\mu_j)}=0.$$

 By recursion assume that $H^0(s_{j+1}\cdots s_nu_2u_1, \alpha_{n-1})_{\mu}\neq 0$ if and only if 
 $\mu=v_1(\alpha_{n-1})$ for some $s_{n-1}s_ns_{n-2}s_{n-1}\leq v_1 \leq u_2u_1$ and 
 $\langle \mu, \alpha_t \rangle =0$ for all
 $j+1\leq t \leq n$.
  Let $\mu$ be such that $H^0(s_{j+1}\cdots s_nu_2u_1, \alpha_{n-1})_{\mu}\neq 0$.
 If $\langle \mu, \alpha_j \rangle =0$, then by Lemma \ref{lemma2.2}, $\mathbb C_{\mu}$
 is in the image of the evaluation map $$H^0(s_js_{j+1}\cdots s_nu_2u_1, \alpha_{n-1}) \to 
 H^0(s_{j+1}\cdots s_nu_2u_1, \alpha_{n-1}).$$
 
  Otherwise, we have $\langle \mu, \alpha_j \rangle =-1$ ( as $\mu\in R^-_{s}\setminus \{-\alpha_{j}\})$. 
  Note that by recursion  
 $\langle \mu+\alpha_j, \alpha_{j+1} \rangle =-1$. Therefore again using recursion, we see that
 $H^0(s_{j+1}\cdots s_nu_2u_1, \alpha_{n-1})_{\mu+\alpha_j}=0$.
Hence $\mathbb C_{\mu}$ is an indecomposable $B_{\alpha_j}$-summand of $H^0(s_{j+1}\cdots s_nu_2u_1, \alpha_{n-1})$.
 Thus, $\mathbb C_{\mu}$ is not in the image of the evaluation map $$ev:H^0(s_{j}s_{j+1}\cdots s_nu_2u_1, \alpha_{n-1}) \longrightarrow
 H^0(s_{j+1}\cdots s_nu_2u_1, \alpha_{n-1}).$$
 In particular, we have $$H^0(s_{j}s_{j+1}\cdots s_nu_2u_1, \alpha_{n-1})_{\mu}=0.$$

 Hence, the assertion of Step 2 follows by recursion.
In particular, for $j=a_{r-2}$ we have  $H^0(u_3, H^0(u_2u_1, \alpha_{n-1}))_{\mu}\neq 0$ if and only if $\mu \in R^-_s$ and 
  $\mu=v_1(\alpha_{n-1})$, where $s_{n-1}s_{n}s_{n-2}s_{n-1} \leq v_1 \leq u_2u_1 $
 and $\langle \mu, \alpha_t \rangle =0$ for all $a_{r-2}\leq t \leq n$.

 Hence we see that   $H^0(u_3, H^0(u_2u_1, \alpha_{n-1}))_{\mu}\neq 0$ if and only if $\mu \in R^-_s$ and
 $\mu=v_1(\alpha_{n-1})$, 
where $ s_{a_{r-2}-1}\cdots s_{n-1}s_{n}s_{a_{r-2}-2}\cdots s_{n-2}s_{n-1} \leq v_1 \leq u_2u_1 $.
 
\underline{Step 3:} Let $J=S\setminus \{\alpha_{1}, \ldots, \alpha_{a_{r-2}}\}$.
Let $W_J$ be the subgroup of $W$ generated by $\{s_{\alpha_j}: j\in J\}$. 
Let $v'\in W_J$. We prove 
 $H^0(v'u_3u_2u_1, \alpha_{n-1})_{\mu}\neq 0$ if and only if $\mu \in R^-_s$ and $\mu=v_1(\alpha_{n-1})$, 
where $s_{a_{r-2}-1}\cdots s_{n-1}s_{n}s_{a_{r-2}-2}\cdots s_{n-2}s_{n-1} \leq v_1 \leq u_2u_1 $.
 
 By Step 2, we see that 
if $H^0(u_3u_2u_1, \alpha_{n-1})_{\mu}\neq 0$, then 
$\mu \in R^-_s$ 
and  $\mu=v_1(\alpha_{n-1})$, where $s_{n-1}s_{n}s_{n-2}s_{n-1} \leq v_1 \leq u_2u_1 $
 and $\langle \mu, \alpha_j \rangle=0$ for all $a_{r-2} \leq j\leq n$.
   Hence, by Lemma \ref{dem1}(1) and Lemma \ref{lemma2.2}(1) we conclude the proof of Step 3.
  
  From Step 3, we see that 
$H^0(\tau_r, \alpha_{n-1})_{\mu}\neq 0$ if and only if $\mu= (\prod\limits_{j=t_1}^n s_j)(\prod\limits_{j=t_2}^{n-1} s_j)(\alpha_{n-1})$, where $t_2<t_1$, $a_{r}\leq t_2\leq a_{r-2}-2$ and $a_{r-1}\leq t_1\leq a_{r-2}-1$.
This completes the proof of (1).
   
  Proof of (2) follows from $(1)$.
  \end{proof}

\begin{lemma}\label{H0u_1}
  $H^0(w_k[a_k, n-1], \alpha_{n-1})_{\mu}\neq 0$  if and only if 
 $\mu= (\prod\limits_{j=t_1}^n s_j)(\prod\limits_{j=t_2}^{n-1} s_j)(\alpha_{n-1})$ for some $a_{k}\leq t_2<t_1\leq a_{k-1}-1$.
In such a case, $dim(H^0(w_k[a_k, n-1], \alpha_{n-1})_{\mu})=1$.

\end{lemma}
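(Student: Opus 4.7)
The plan is to mirror closely the three-step strategy used in the proof of Lemma \ref{Weights}, applied to the factorization $w_k[a_k,n-1]=w_{k-1}\cdot[a_k,n]\cdot[a_k,n-1]$. Set $u_1=[a_k,n-1]$, $u_2=[a_k,n]$, $u_3=[a_{k-1},n]$, and $w_{k-2}'=[a_1,n]\cdots[a_{k-2},n]$, so that $w_k[a_k,n-1]=w_{k-2}'\cdot u_3\,u_2\,u_1$. I will compute $H^0(w_k[a_k,n-1],\alpha_{n-1})$ by applying SES iteratively from the right.

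First, I compute $H^0(u_2u_1,\alpha_{n-1})$. Starting from $H^0(u_1,\alpha_{n-1})$ (whose weights are given by Lemma \ref{H0}(1) with $j=a_k$), I repeat Step 1 of the proof of Lemma \ref{Weights} verbatim: the evaluation map $ev_3\colon H^0(s_{n-1}s_nu_1,\alpha_{n-1})\to H^0(u_1,\alpha_{n-1})$ vanishes because $\langle s_t\cdots s_{n-1}(\alpha_{n-1}),\alpha_{n-1}\rangle=-1$ for $a_k\leq t\leq n-2$ and because the indecomposable $B_{\alpha_{n-1}}$-summand $\mathbb{C}h_{\alpha_{n-1}}\oplus\mathbb{C}_{-\alpha_{n-1}}$ has vanishing $H^0$ after applying $s_{n-1}$. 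Thus $\mathbb{C}_{s_ns_{n-1}(\alpha_{n-1})}$ is the highest weight, and the embedding $H^0(u_2u_1,\alpha_{n-1})\hookrightarrow H^0(u_2u_1w_1^{-1},\beta_0)$ for $w_1$ with $w_1(\alpha_{n-1})=\beta_0$ and $l(u_2u_1w_1^{-1})=l(u_2u_1)+l(w_1)$ gives that each weight occurs with multiplicity one. The surviving weights are $(\prod_{j=t_1}^n s_j)(\prod_{j=t_2}^{n-1}s_j)(\alpha_{n-1})$ for $a_k\leq t_2<t_1\leq n-1$.

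Next, I apply $H^0(u_3,-)$ exactly as in Step 2 of Lemma \ref{Weights}: weights $\mu$ with $\langle\mu,\alpha_t\rangle=-1$ for some $a_{k-1}\leq t\leq n$ are killed by the successive evaluation maps, so only those weights additionally satisfying $\langle\mu,\alpha_t\rangle=0$ for all $a_{k-1}\leq t\leq n$ survive. This collapses the range of $t_1$ to $a_k\leq t_1\leq a_{k-1}-1$. Finally, since $w_{k-2}'\in W_J$ for $J=S\setminus\{\alpha_1,\ldots,\alpha_{a_{k-1}}\}$, Step 3 of Lemma \ref{Weights} (using Lemmas \ref{dem1}(1) and \ref{lemma2.2}(1)) shows that $H^0(w_{k-2}',-)$ preserves the weight set, yielding exactly the set of $\mu$ described in the statement, each with multiplicity one.

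The main obstacle is verifying Step 1 in our degenerate setting, where $u_1$ and $u_2$ share the starting index $a_k$, whereas the original proof had the strict inequality $a_{r-1}>a_r$. However, the proof of Step 1 only uses the weight list of $H^0(u_1,\alpha_{n-1})$ from Lemma \ref{H0}(1), the vanishing of $ev_3$, and the embedding argument into $H^0(\cdot,\beta_0)$; none of these steps invoke the strict inequality, so the adaptation is essentially formal. A separate minor check, easily handled using the length formulas in Lemma \ref{reduced} and the identity $c\tau_k=w_k[a_k,n-1]$, is that the concatenation $w_{k-2}'\cdot u_3\cdot u_2\cdot u_1$ is a reduced expression, which is what allows the iterated use of SES in the first place.
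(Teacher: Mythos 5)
Your proposal is correct, but it takes a genuinely different route from the paper's. The paper does not re-run the three-step analysis of Lemma \ref{Weights} on the word $[a_1,n]\cdots[a_{k-1},n][a_k,n][a_k,n-1]$; instead it exploits the identity $w_k[a_k,n-1]=c\,\tau_k$ and takes the already-computed module $H^0(\tau_k,\alpha_{n-1})$ of Lemma \ref{Weights} as its starting point, then tracks the effect of left multiplication by the blocks of $c=[a_1,n][a_2,a_1-1]\cdots[a_k,a_{k-1}-1]$: the block $[a_k,a_{k-1}-1]$ \emph{creates} new weights (pairings equal to $+1$ push the parameter $t_1$ down from $a_{k-1}$ to $t_2+1$), the block $[a_{k-1},a_{k-2}-1]$ \emph{kills} the weights with $a_{k-1}\le t_1\le a_{k-2}-1$ (pairings equal to $-1$), and the remaining blocks act trivially because the surviving weights pair to zero with $\alpha_l$ for $l\ge a_{k-2}$. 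Your route instead computes $H^0([a_k,n][a_k,n-1],\alpha_{n-1})$ from scratch as a degenerate instance of Step 1 of Lemma \ref{Weights} and then only ever truncates; it is more uniform and self-contained, while the paper's is shorter because it reuses Lemma \ref{Weights} as a black box. You have also correctly identified the one real obstacle, and it does go through: Step 1 uses only the weight list of $H^0(u_1,\alpha_{n-1})$ from Lemma \ref{H0}(1), the vanishing of $ev_3$, and the highest-weight generation inside $H^0(w',\beta_0)$, none of which needs $a_{r-1}>a_r$; with $u_2=[a_k,n]$ the lowest weight $u_2u_1(\alpha_{n-1})$ becomes $(\prod_{j=a_k+1}^{n}s_j)(\prod_{j=a_k}^{n-1}s_j)(\alpha_{n-1})$, so the Step 1 weight set is exactly the specialization of the general formula obtained by letting the lower bound on $t_1$ collapse to $t_2<t_1$, and Step 2 then cuts it to $t_1\le a_{k-1}-1$ as you say. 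Two points you should make explicit rather than leave implicit: the existence of $w_1$ with $w_1(\alpha_{n-1})=\beta_0$ and $l(u_2u_1w_1^{-1})=l(u_2u_1)+l(w_1)$ for your new word, which is what the multiplicity-one claim rests on (the paper asserts the analogous fact for $\tau_r$ without proof, so this is not a new gap, but it is your burden here); and the reducedness of $w_{k-2}'u_3u_2u_1$, which is cleanest to get from Lemma \ref{coxeter}(3), since $w_k[a_k,n-1]$ is an initial segment of the reduced expression of $w_0$ given there, rather than from Lemma \ref{reduced}, whose hypothesis of a strictly decreasing sequence your repeated index $a_k$ violates.
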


 \begin{proof} 
     If $H^0(\tau_k, \alpha_{n-1})_{\mu}\neq 0$, then by Lemma \ref{Weights}, we have $H^0(\tau_k, \alpha_{n-1})_{\mu}=\mathbb C_{\mu}$.
   Let $\mu=-(\alpha_n+2(\sum\limits_{j=a_{k-1}}^{n-1}\alpha_j)+\sum\limits_{j=t_2}^{a_{k-1}-1}\alpha_j)$.
   Note that $\langle \mu, \alpha_{a_{k-1}-1} \rangle =1$. Then $H^0(s_{a_{k-1}-1}\tau_k, \alpha_{n-1})_{\mu_1}\neq 0$, where 
   $\mu_1=\mu-\alpha_{a_{k-1}-1}$. By recursion, we see that $H^0(s_i\cdots s_{a_{k-1}-1}\tau_k, \alpha_{n-1})_{\mu_i}\neq 0$ for some
   $\mu_i=\mu-\sum\limits_{j=a_{k-1}-i}^{a_{k-1}-1}\alpha_j$ with $1\leq i \leq a_{k-1}-(t_2+1)$.
   Hence, we conclude that 
 $H^0([a_k, a_{k-1}-1]\tau_k, \alpha_{n-1})_{\mu}\neq 0$  if and only if 
 $\mu= (\prod\limits_{j=t_1}^n s_j)(\prod\limits_{j=t_2}^{n-1} s_j)(\alpha_{n-1})$ for some  $a_{k}\leq t_2<t_1\leq a_{k-2}-1$.
 
 \underline{Claim:} $H^0([a_{k-1}, a_{k-2}-1][a_k, a_{k-1}-1]\tau_k, \alpha_{n-1})_{\mu}\neq 0$  if and only if 
 $\mu= (\prod\limits_{j=t_1}^n s_j)(\prod\limits_{j=t_2}^{n-1} s_j)(\alpha_{n-1})$ for some  $a_{k}\leq t_2<t_1\leq a_{k-1}-1$.
 
 Fix $m$ such that $a_{k-2}-1\leq m\leq a_{k-1}$. Let
 $\mu_{m}=(\alpha_n+2(\sum\limits_{j=m}^{n-1}\alpha_j)+\sum\limits_{j=t_2}^{m-1}\alpha_j)$.
 Note that $\langle \mu_{m}, \alpha_{m} \rangle =-1$. 
 Hence the claim follows from  SES and Lemma \ref{lemma2.2}.
 
 Let  $\mu= (\prod\limits_{j=t_1}^n s_j)(\prod\limits_{j=t_2}^{n-1} s_j)(\alpha_{n-1})$ for some  $a_{k}\leq t_2<t_1\leq a_{k-1}-1$.
Since $\langle \mu, \alpha_l \rangle =0$ for all $a_{k-2}\leq l \leq n$, we conclude that
$H^0(w_k[a_k, n-1], \alpha_{n-1})_{\mu}\neq 0$  if and only if 
 $\mu= (\prod\limits_{j=t_1}^n s_j)(\prod\limits_{j=t_2}^{n-1} s_j)(\alpha_{n-1})$ for some $a_{k}\leq t_2<t_1\leq a_{k-1}-1$.
\end{proof}

 Let $h(\alpha_n)\in \mathfrak h$ be the fundamental dominant coweight corresponding to $\alpha_n$. That is,
 $\alpha_{n}(h(\alpha_n))=1$ and $\alpha_i(h(\alpha_n))=0$ for $i\neq n$. These coweights are used in studing  the indecomposable $B_{\alpha}$-modules 
in \cite[p. 8, Lemma 3.3]{Ka4}).

Let $v_r'=s_{a_r}\cdots s_{n-1}$ and  $v_r=s_ns_{a_r}\cdots s_{n-1}$.

Let $V'=H^0(v_r', \mathbb Ch(\alpha_n)\oplus \mathbb C_{-\alpha_n})$ and 
$V=H^0(v_r, \mathbb Ch(\alpha_n)\oplus \mathbb C_{-\alpha_n})$.
\begin{lemma}\label{V_m}\

\begin{enumerate}
\item  $V'_{\mu}\neq 0$ if and only if $\mu$ is of the form \\
 (i) $\mu \in \{0,  -\alpha_n \}$ or \\
 (ii) $\mu=-(\sum\limits_{j=t}^{n}\alpha_j)~~\mbox{ for some} ~~ a_r\leq t<n $ or \\ 
 (iii) $\mu=-(\alpha_n+2(\sum\limits_{j=t}^{n-1}\alpha_j))~~ \mbox{ for some}~~ a_r\leq t\leq n-1$ or\\
(iv) $\mu=-(\alpha_n+2(\sum\limits_{j=t_1}^{n-1}\alpha_j)+\sum\limits_{j=t_2}^{t_1-1}\alpha_j) 
 ~~\mbox{for some }~~ a_r\leq t_2<t_1\leq n-1$.
 \item 
 $V_{\mu}\neq 0$ if and only if $\mu$ is of the form: \\
 (i) $\mu=-(\alpha_n+2(\sum\limits_{j=t}^{n-1}\alpha_j))$ for some $a_r\leq t \leq n-1$ or \\
 (ii) $\mu=-(\alpha_n+2(\sum\limits_{j=t_1}^{n-1}\alpha_j)+\sum\limits_{j=t_2}^{t_1-1}\alpha_j)$ for some $a_r\leq t_2<t_1\leq n-1$.

\end{enumerate}

 \end{lemma}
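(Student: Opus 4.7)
The plan is to establish part (1) by induction on $n-1-a_r$, the length of the word $v_r'$, using iterated short exact sequences (SES) together with the classification of indecomposable $\widetilde{B}_\alpha$-modules from Lemma \ref{lemma2.4}; part (2) will then follow by decomposing $V'$ as a $\widetilde{B}_{\alpha_n}$-module and applying Lemmas \ref{lemma2.3} and \ref{lemma2.2}.

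For the base case $a_r=n-1$ of part (1), the two weights $0$ and $-\alpha_n$ of $M$ differ by $\alpha_n$, not by an integer multiple of $\alpha_{n-1}$, so by Lemma \ref{lemma2.4} we have $M\cong \mathbb{C}_0\oplus \mathbb{C}_{-\alpha_n}$ as a $\widetilde{B}_{\alpha_{n-1}}$-module. Since $\langle -\alpha_n,\alpha_{n-1}\rangle=2$ in type $C_n$, Lemma \ref{dem1}(3) shows $H^0(s_{n-1},-\alpha_n)$ is three-dimensional with weights $-\alpha_n,\,-\alpha_n-\alpha_{n-1},\,-\alpha_n-2\alpha_{n-1}$, reproducing the list (i)--(iii) for $a_r=n-1$. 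For the inductive step, assume the conclusion for $H^0(s_{j+1}\cdots s_{n-1},M)$ with $a_r\le j<n-1$. Every weight described in (i)--(iv) for parameter $j+1$ involves only $\alpha_l$ with $l\ge j+1$, so no two of them differ by a multiple of $\alpha_j$; by Lemma \ref{lemma2.4} the inner module then decomposes as a direct sum of one-dimensional $\widetilde{B}_{\alpha_j}$-modules $\mathbb{C}_\nu$. A direct case analysis shows $\langle\nu,\alpha_j\rangle\in\{0,1,2\}$ (and is never $-1$) for each such $\nu$, so Lemma \ref{dem1} yields either $\nu$ alone or an $\alpha_j$-string of length two or three below it; collecting these contributions via the SES
\[
H^0(s_j\cdots s_{n-1},M)\;=\;H^0\!\bigl(s_j,\,H^0(s_{j+1}\cdots s_{n-1},M)\bigr)
\]
reproduces the list (i)--(iv) with parameter $j$.

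For part (2), the essential structural point is that as a $\widetilde{B}_{\alpha_n}$-module $M\cong V_2\otimes \mathbb{C}_{-\omega_n}$, where $V_2$ is the standard two-dimensional representation of $\widetilde{L}_{\alpha_n}\cong SL_2$. Because $v_r'$ involves no $s_n$, this indecomposable $\widetilde{B}_{\alpha_n}$-structure is inherited by the weight-pair $\{0,-\alpha_n\}\subset V'$, and Lemma \ref{lemma2.3}(3) with $\langle -\omega_n,\alpha_n\rangle=-1$ forces this summand to contribute $0$ to $V=H^0(s_n,V')$. A direct check gives $\langle\mu,\alpha_n\rangle=-1$ for every weight $\mu$ of type (ii), killed by Lemma \ref{lemma2.2}(4); while $\langle\mu,\alpha_n\rangle=0$ for every $\mu$ of types (iii) and (iv), each contributing $\mathbb{C}_\mu$ by Lemma \ref{lemma2.2}(1). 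The surviving weights are then precisely those listed in part (2).

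The main obstacle is twofold: (a) the combinatorial verification at each inductive step that no two weights of the current inner module differ by a multiple of $\alpha_j$, which is what makes the intermediate modules $\widetilde{B}_{\alpha_j}$-semisimple and lets the SES reduce to a termwise application of Lemma \ref{dem1}; and (b) justifying that the original indecomposable $\widetilde{B}_{\alpha_n}$-pair $\{0,-\alpha_n\}\subset M$ persists as a single indecomposable summand inside $V'$---this is exactly what removes the weights $0$ and $-\alpha_n$ (and, via Lemma \ref{lemma2.2}(4), all type (ii) weights) from $V$ in part (2), preventing a naive weight count from overproducing.
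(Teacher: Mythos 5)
Your proof is correct and, for part (2), matches the paper's argument: identify the weight pair $\{0,-\alpha_n\}$ with the indecomposable $\widetilde L_{\alpha_n}$-module $V_1\otimes\mathbb C_{-\omega_n}$ so that it dies under $H^0(s_n,-)$, kill the type (ii) weights because $\langle\mu,\alpha_n\rangle=-1$, and retain types (iii)--(iv) because $\langle\mu,\alpha_n\rangle=0$. For part (1) you take a somewhat different route: the paper disposes of it in one line by asserting that $V'$ is a cyclic $B$-module generated by $h(\alpha_n)$ and reading off the weights, whereas you run an explicit induction along the word $s_{a_r}\cdots s_{n-1}$, splitting the intermediate module into one-dimensional $\widetilde B_{\alpha_j}$-summands (legitimate, since no two of its weights differ by a multiple of $\alpha_j$) and applying Lemma \ref{dem1} termwise; your version is longer but actually proves the weight list that the paper only asserts, and your pairing computations $\langle\nu,\alpha_j\rangle\in\{0,1,2\}$ check out. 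The one point to tighten is the assertion that the indecomposable $\widetilde B_{\alpha_n}$-structure on $\{0,-\alpha_n\}$ ``is inherited'' by $V'$ merely because $v_r'$ involves no $s_n$; the clean justification is that the evaluation map $V'\to \mathbb Ch(\alpha_n)\oplus\mathbb C_{-\alpha_n}$ is a surjective $B$-module homomorphism (surjective because at every stage of your induction the summands $\mathbb C_\nu$ satisfy $\langle\nu,\alpha_j\rangle\geq 0$), and its restriction to $V'_0\oplus V'_{-\alpha_n}$ --- which is a $\widetilde B_{\alpha_n}$-direct summand of $V'$ since no other weight of $V'$ lies in $\mathbb Z\alpha_n$ --- is therefore an isomorphism onto the indecomposable module $V_1\otimes\mathbb C_{-\omega_n}$. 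With that patch the argument is complete; the paper itself is no more explicit on this point.
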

\begin{proof}
Proof of (1):
 Note that $H^0(s_{a_r}\cdots s_{n-1}, \mathbb Ch(\alpha_n)\oplus \mathbb C_{-\alpha_n})$ is a cyclic $B$-module generated by $h(\alpha_n)$.
 Therefore the weights are of the form (each with multiplicity one):
 
 $\{0, -\alpha_n\} \cup \{-(\sum\limits_{j=t}^{n}\alpha_j): a_r\leq t<n\} \cup 
 \{-(\alpha_n+2(\sum\limits_{j=t}^{n-1}\alpha_j)): a_r\leq t\leq n-1\} \cup
 \{-(\alpha_n+2(\sum\limits_{j=t_1}^{n-1}\alpha_j)+\sum\limits_{j=t_2}^{t_1-1}\alpha_j): a_r\leq t_2<t_1\leq n-1\}$.
 
 Proof of (2):
 By Lemma \ref{lemma2.4}, $\mathbb Ch(\alpha_n)\oplus \mathbb C_{-\alpha_n}=V_1\otimes\mathbb C_{-\omega_{n}}$, where $V_1$ is the $2$-dimensional irreducible
 $\tilde{L}_{\alpha_n}$-module.
 Since $\langle -(\sum\limits_{j=t}^{n}\alpha_j), \alpha_n \rangle =-1$ for all $a_r\leq t \leq n-1$, by SES we see that, the weights of 
 $H^0(s_ns_{a_r}\cdots s_n, \mathbb Ch(\alpha_n)\oplus \mathbb C_{-\alpha_n})$ are of the form (each with multiplicity one):
 
 $\{-(\alpha_n+2(\sum\limits_{j=t}^{n-1}\alpha_j)): a_r\leq t\leq n-1\}\cup 
\{-(\alpha_n+2(\sum\limits_{j=t_1}^{n-1}\alpha_j)+\sum\limits_{j=t_2}^{t_1-1}\alpha_j): a_r\leq t_2<t_1\leq n-1\}$.
\end{proof}

\begin{lemma}\label{H0ar-1}
 $H^0(s_{a_{r-1}}\cdots s_{n-1}, V)_{\mu}\neq 0$ if and if $\mu$ is of the form: \\ 
 (i) $\mu=-(\alpha_n+2(\sum\limits_{j=t}^{n-1}\alpha_j))$ for some $a_r\leq t \leq a_{r-1}-1$
or \\
(ii) $\mu=-(\alpha_n+2(\sum\limits_{j=t_1}^{n-1}\alpha_j)+\sum\limits^{t_1-1}_{j=t_2}\alpha_j)$ for some $a_r\leq t_2< t_1 \leq a_{r-1}-1$.
\end{lemma}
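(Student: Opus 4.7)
The plan is to compute $H^0(s_{a_{r-1}}\cdots s_{n-1}, V)$ inductively by peeling off one simple reflection at a time using the SES $H^0(w,V)\simeq H^0(s_\gamma, H^0(s_\gamma w, V))$ (equivalently Lemma \ref{lemma2.2}(1)) recalled in Section 2. Setting $V^{(m)}:=H^0(s_m s_{m+1}\cdots s_{n-1},V)$ for $a_{r-1}\leq m\leq n$ with $V^{(n)}:=V$, one has $V^{(m)}=H^0(s_m,V^{(m+1)})$; Lemma \ref{H0ar-1} is the statement for $V^{(a_{r-1})}$, while Lemma \ref{V_m}(2) already gives the statement for $V^{(n)}$. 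I would prove by descending induction on $m$, from $n$ down to $a_{r-1}$, that $V^{(m)}$ has exactly the weights $-(\alpha_n+2\sum_{j=t}^{n-1}\alpha_j)$ for $a_r\leq t\leq m-1$ and $-(\alpha_n+2\sum_{j=t_1}^{n-1}\alpha_j+\sum_{j=t_2}^{t_1-1}\alpha_j)$ for $a_r\leq t_2<t_1\leq m-1$, each with multiplicity one.

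It is convenient to switch to the standard $\epsilon$-realization of type $C_n$, with $\alpha_i=\epsilon_i-\epsilon_{i+1}$ for $i<n$ and $\alpha_n=2\epsilon_n$: the two weight families become $-2\epsilon_t$ and $-\epsilon_{t_1}-\epsilon_{t_2}$, and pairings $\langle \cdot,\alpha_m\rangle$ for $m<n$ are immediate. The crux of the inductive step is that no weight of $V^{(m+1)}$ involves the index $m+1$ (by the inductive bound $t,t_1\leq m$), so the difference of any two weights of $V^{(m+1)}$ is never $\pm\alpha_m$. Consequently every $\alpha_m$-string in $V^{(m+1)}$ has length one, and the root subgroup $U_{\alpha_m}$ must act trivially on every weight space.

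Therefore, as a $B_{\alpha_m}$-module, $V^{(m+1)}$ is the direct sum of the one-dimensional characters $\mathbb C_\mu$ indexed by its weights (this is the simplest instance of the description in Lemma \ref{lemma2.4}). The functor $H^0(s_m,-)$ then acts weight-by-weight by Lemma \ref{lemma2.2} (equivalently Lemma \ref{lemma2.3} with trivial Levi factor). A direct pairing computation gives $\langle \mu,\alpha_m\rangle=-2$ for $\mu=-2\epsilon_m$, $=-1$ for $\mu=-\epsilon_m-\epsilon_{t_2}$ with $t_2<m$, and $=0$ for every other weight in the list. Hence these first two families vanish in $H^0(s_m,V^{(m+1)})$ while every other weight is preserved, giving exactly the predicted weights for $V^{(m)}$. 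Iterating the induction down to $m=a_{r-1}$ delivers Lemma \ref{H0ar-1}.

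The one genuinely delicate point is the $B_{\alpha_m}$-character decomposition of $V^{(m+1)}$, which rests on the $\alpha_m$-isolation of its weights and therefore on the inductive bound $t,t_1\leq m$: if either $-2\epsilon_{m+1}$ or $-\epsilon_m-\epsilon_{m+1}$ were present, the three weights $-2\epsilon_m,\,-\epsilon_m-\epsilon_{m+1},\,-2\epsilon_{m+1}$ would fuse into a single indecomposable $B_{\alpha_m}$-summand (a symmetric-square of the standard $\mathrm{SL}_2$-representation twisted by a character), and $-2\epsilon_m$ could then appear in $H^0$ despite its pairing being $-2$. The inductive hypothesis precisely precludes this, so the weight-by-weight analysis is valid and no extension or higher-cohomology subtleties intervene.
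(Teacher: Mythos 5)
Your proof is correct and takes essentially the same route as the paper's: both peel off $s_{n-1}, s_{n-2},\ldots,s_{a_{r-1}}$ one reflection at a time via the SES, and at the $m$-th stage kill exactly the weights $-(\alpha_n+2\sum_{j=m}^{n-1}\alpha_j)=-2\epsilon_m$ (pairing $-2$ with $\alpha_m$) and $-(\alpha_n+2\sum_{j=m}^{n-1}\alpha_j+\sum_{j=t_2}^{m-1}\alpha_j)=-\epsilon_m-\epsilon_{t_2}$ (pairing $-1$), while every remaining weight pairs to zero and survives. Your explicit check that $V^{(m+1)}$ splits as a direct sum of $B_{\alpha_m}$-characters (because no weight involves $\epsilon_{m+1}$, so no two weights differ by a multiple of $\alpha_m$) is precisely the point the paper leaves implicit behind its citation of Lemma \ref{lemma2.3}.
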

\begin{proof}
 We have  $H^0(s_{n-1}, V)=H^0(s_{n-1}, H^0(s_ns_{a_r}\cdots s_{n-1}, \mathbb Ch(\alpha_n)\oplus \mathbb C_{-\alpha_n})$.
Since $\langle -(\alpha_n+2\alpha_{n-1}), \alpha_{n-1} \rangle =-2$ and 
$\langle -(\alpha_n+2\alpha_{n-1}+\sum\limits_{j=t_2}^{n-2}\alpha_j), \alpha_{n-1} \rangle =-1$ for $a_r\leq t_2 \leq n-2$,
by Lemma \ref{lemma2.3} and Lemma \ref{V_m}, we see that the weights of $H^0(s_{n-1}, V)$ are of the form 

$\mu=-(\alpha_n+2(\sum\limits_{j=t}^{n-1}\alpha_j))$, where $a_r\leq t \leq n-2$
or\\
$\mu=-(\alpha_n+2(\sum\limits_{j=t_1}^{n-1}\alpha_j)+\sum\limits^{t_1-1}_{j=t_2}\alpha_j)$, where $a_r\leq t_2< t_1 \leq n-2$.

Now the proof of the lemma follows by similar arguments as above.
\end{proof}

 \begin{proposition}  \label{H0alphan} 
 Let $2 \leq r\leq k$. Then,
   $H^0(w_r, \alpha_n)_{\mu}\neq 0$ if and only if $\mu$ is of the form : \\ 
 (i) $\mu=-(\alpha_n+2(\sum\limits_{j=t}^{n-1}\alpha_j))$ for some $a_r\leq t \leq a_{r-1}-1$
or \\
(ii) $\mu=-(\alpha_n+2(\sum\limits_{j=t_1}^{n-1}\alpha_j)+\sum\limits^{t_1-1}_{j=t_2}\alpha_j)$ for some $a_r\leq t_2< t_1 \leq a_{r-1}-1$.
In such a case, $dim(H^0(w_r, \alpha_n)_{\mu})=1$.
  \end{proposition}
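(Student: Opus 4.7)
The plan is to establish a $B$-module isomorphism $H^0(w_r, \alpha_n) \cong H^0(s_{a_{r-1}}\cdots s_{n-1}, V)$, where $V = H^0(v_r, \mathbb{C}h(\alpha_n)\oplus \mathbb{C}_{-\alpha_n})$; the weight description in the proposition then follows directly from Lemma~\ref{H0ar-1}. The strategy parallels the three-step approach used in the proof of Lemma~\ref{Weights}.

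First I would factor $w_r = P \cdot Q$ with $P = \prod_{i=1}^{r-2}(\prod_{j=a_i}^n s_j)$ (empty when $r=2$) and $Q = (\prod_{j=a_{r-1}}^n s_j)(\prod_{j=a_r}^n s_j)$, and compute $H^0(Q, \alpha_n)$. Peeling off the final $s_n$ by Lemma~\ref{lemma2.2}(1) gives $H^0(s_n, \alpha_n) = \mathfrak{sl}_{2,\alpha_n}$, which by Lemma~\ref{lemma2.4} contains $\mathbb{C}h(\alpha_n)\oplus\mathbb{C}_{-\alpha_n}$ as an indecomposable $B_{\alpha_n}$-submodule with quotient $\mathbb{C}_{\alpha_n}$. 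Iteratively applying SES up the reduced expression of $Q$, mirroring Step~1 of the proof of Lemma~\ref{Weights}, I would show that the evaluation maps onto the $\mathbb{C}_{\alpha_n}$-type quotients vanish (due to negative Cartan pairings with intervening simple roots), so that the surviving module is precisely $H^0(s_{a_{r-1}}\cdots s_{n-1}, V)$ with $V$ as in Lemma~\ref{V_m}(2).

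Next, for every weight $\mu$ of type (i) or (ii) in the statement and every simple root $\alpha_j$ occurring in $P$ (necessarily $a_{r-2}\le j\le n$), I would verify $\langle \mu, \alpha_j\rangle = 0$ by a direct Cartan-matrix computation: since $t, t_1, t_2 \leq a_{r-1}-1 \leq a_{r-2}-2$, the coefficients of $\mu$ at $\alpha_{j-1}, \alpha_j, \alpha_{j+1}$ are $-2, -2, -2$ (with boundary modifications $-2, -2, -1$ at $j = n-1$ and $-2, -1, 0$ at $j = n$), and in every case the pairing in $C_n$ sums to zero. Hence by Lemma~\ref{dem1}(1) together with iterated Lemma~\ref{lemma2.2}(1), $H^0(w_r, \alpha_n) \cong H^0(Q, \alpha_n)$ as $B$-modules, and the weight description of $H^0(w_r,\alpha_n)$ follows.

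For the multiplicity-one claim, I would follow the argument from the proof of Lemma~\ref{Weights}: since $\alpha_n$ is long and the highest long root of $C_n$ is $2\omega_1$, there exists $w'\in W$ with $w'(\alpha_n) = 2\omega_1$ and $l(w_r(w')^{-1}) = l(w_r) + l(w')$, yielding an embedding $H^0(w_r, \alpha_n)\hookrightarrow H^0(w_r(w')^{-1}, 2\omega_1)$; the target is a Demazure submodule of the adjoint representation $V(2\omega_1)$, whose non-zero weights (the roots of $G$) are all of multiplicity one. The main obstacle is the first step: carefully tracking the indecomposable $B_{\alpha_n}$-summand $\mathbb{C}h(\alpha_n)\oplus\mathbb{C}_{-\alpha_n}$ through each application of SES and ruling out survival of the $\mathbb{C}_{\alpha_n}$-quotient. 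This bookkeeping is more delicate than the analog in Lemma~\ref{Weights} because $\langle \alpha_n, \alpha_{n-1}\rangle = -2$, which forces the use of Lemma~\ref{lemma2.3}(2) in addition to Lemma~\ref{lemma2.2}(1).
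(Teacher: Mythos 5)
Your proposal is correct and follows essentially the same route as the paper: the same factorization $w_r = uv$ with $v = (\prod_{j=a_{r-1}}^n s_j)(\prod_{j=a_r}^n s_j)$, the same reduction of $H^0(v,\alpha_n)$ to $H^0(s_{a_{r-1}}\cdots s_{n-1}, V)$ so that Lemma~\ref{H0ar-1} gives the weights, and the same observation that $\langle\mu,\alpha_j\rangle=0$ for the simple roots occurring in $u$ so that the outer factor acts trivially. The only cosmetic difference is that you justify multiplicity one by embedding into a Demazure module of the adjoint representation $V(2\omega_1)$, whereas the paper inherits it from the multiplicity-one statements already recorded in Lemmas~\ref{V_m} and~\ref{H0ar-1}; both are fine.
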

  \begin{proof}
 Let $u=(\prod\limits_{j=a_1}^n s_j)(\prod\limits_{j=a_2}^n s_j)(\prod\limits_{j=a_3}^{n} s_j)\cdots (\prod\limits_{j=a_{r-3}}^{n} s_j)
 (\prod\limits_{j=a_{r-2}}^{n} s_j)$
  and let $v=(\prod\limits_{j=a_{r-1}}^n s_j)(\prod\limits_{j=a_r}^{n} s_j)$.
 Observe that $w_r=uv$, $l(w_r)=l(uv)=l(u)+l(v)$ and by SES, we have $H^0(w_r, \alpha_n)=H^0(u, H^0(v, \alpha_n))$.
 
 Since $\langle \alpha_n, \alpha_{n-1}\rangle =-2$ and by SES,  $H^0(v, \alpha_n)=H^0(s_{a_{r-1}}\cdots s_{n-1}, V)$, where 
 $V=H^0(v_r, \mathbb Ch(\alpha_n)\oplus \mathbb C_{-\alpha_n})$.
 By Lemma \ref{H0ar-1}, $H^0(v, \alpha_n)_{\mu}\neq 0$ if and only if $\mu$ is of the form: \\ 
 (i) $\mu=-(\alpha_n+2(\sum\limits_{j=t}^{n-1}\alpha_j))$, where $a_r\leq t \leq a_{r-1}-1$
or \\
(ii) $\mu=-(\alpha_n+2(\sum\limits_{j=t_1}^{n-1}\alpha_j)+\sum\limits^{t_1-1}_{j=t_2}\alpha_j)$, where $a_r\leq t_2< t_1 \leq a_{r-1}-1$.

Note that for any such $\mu$, we have $\langle \mu, \alpha_j \rangle=0$ for all $a_{r-1} \leq j \leq n$.
Hence by Lemma \ref{dem1} and by SES, we conclude that $H^0(w_r, \alpha_n)_{\mu}\neq 0$ if and only if $\mu$ is as in the statement.
%of the form: \\ 
 %(i) $\mu=-(\alpha_n+2(\sum\limits_{j=r}^{n-1}\alpha_j))$, where $a_r\leq r \leq a_{r-1}-1$
%or \\
%(ii) $\mu=-(\alpha_n+2(\sum\limits_{j=t_1}^{n-1}\alpha_j)+\sum\limits_{j=t_1-1}^{t_2}\alpha_j)$, where $a_r\leq t_2< t_1 \leq a_{r-1}-1$.
 \end{proof}

\section{Cohomology module $H^1$ of the relative tangent bundle}

In this section we describe the weights of $H^1$ of a relative tangent bundle.

 Let $\mathfrak g'_{<\alpha_n}$
be the $B$-submodule of $\mathfrak g$ generated by $h(\alpha_n)$. Note that 
$$ \mathfrak g'_{<\alpha_n}=\mathbb C h(\alpha_n)\oplus \bigoplus \limits_{\beta\leq -\alpha_n} \mathfrak g_{\beta}.$$
\begin{lemma}\label{g'}
 Let $w\in W$. Let $V$ be a $B$-submodule of $\mathfrak g'_{<\alpha_n}$ such that either 
 $(\mathbb C h(\alpha_n)\oplus \mathfrak g_{-\alpha_n})\cap V=0$ or $V=\mathfrak g'_{<\alpha_n}$.
 Then, $H^1(w, V)_{\mu}= 0$ unless $\mu\in R^-_{s}\setminus (-S)$.
\end{lemma}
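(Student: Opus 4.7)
The plan is to prove the lemma by induction on $\ell(w)$. The base case $\ell(w)=0$ is immediate since $H^1(e,V)=0$. For the inductive step, pick a simple root $\gamma$ with $\ell(s_\gamma w)=\ell(w)-1$ and set $u:=s_\gamma w$, so that SES(2) reads
\[
0 \to H^1(s_\gamma, H^0(u, V)) \to H^1(w, V) \to H^0(s_\gamma, H^1(u, V)) \to 0.
\]
By the inductive hypothesis the weights of $H^1(u,V)$ lie in $R_s^-\setminus(-S)$, and Lemma \ref{lambda}, applied to the $B$-module $H^1(u,V)$, gives the same property for $H^0(s_\gamma, H^1(u, V))$. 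So it remains to show that every weight of $H^1(s_\gamma, H^0(u, V))$ lies in $R_s^-\setminus(-S)$.

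For this I would decompose $H^0(u, V)$ as a direct sum of indecomposable $B_\gamma$-summands $V'\otimes \mathbb{C}_\lambda$ via Lemma \ref{lemma2.4}; by Lemma \ref{lemma2.3}, only those with $\langle\lambda,\gamma\rangle\le -2$ contribute, and the contribution is $V' \otimes H^0(L_\gamma/B_\gamma, \mathbb{C}_{s_\gamma\cdot\lambda})$. To handle both clauses of the hypothesis simultaneously, I would reduce the full case $V=\mathfrak g'_{<\alpha_n}$ to the other case using the short exact sequences
\[
0 \to V_0 \to \mathfrak g'_{<\alpha_n} \to \mathbb{C}_0 \to 0, \qquad 0 \to V_{00} \to V_0 \to \mathbb{C}_{-\alpha_n} \to 0,
\]
where $V_0=\bigoplus_{\beta\le -\alpha_n}\mathfrak g_\beta$ and $V_{00}=\bigoplus_{\beta<-\alpha_n}\mathfrak g_\beta$. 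Since $H^j(w,\mathbb{C}_0)=0$ for $j\ge 1$, the first sequence reduces the problem to $V_0$; the second then reduces to $V_{00}$, which satisfies the intersection hypothesis $(\mathbb{C}h(\alpha_n)\oplus\mathfrak g_{-\alpha_n})\cap V_{00}=0$, modulo boundary contributions from $H^\bullet(w,-\alpha_n)$ that must be tracked via the dot action.

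The main obstacle is to verify that no indecomposable $B_\gamma$-summand $V'\otimes \mathbb{C}_\lambda$ of $H^0(u,V)$ with $\langle\lambda,\gamma\rangle\le -2$ produces a weight equal to $0$, a negative simple root $-\alpha_i$, or a long negative root inside $V'\otimes H^0(L_\gamma/B_\gamma, \mathbb{C}_{s_\gamma\cdot\lambda})$. The crucial point is that the hypothesis on $V$ forces each weight of $V$ (in the second clause) to have support of size at least two in the simple root basis, so that after applying $s_\gamma\cdot$ and tensoring with the $L_\gamma$-module $V'$ the outputs never collapse onto $\{0\}\cup(-S)$ nor onto a long root. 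This verification proceeds in the spirit of the weight bookkeeping of Lemma \ref{Weights}, combining Corollary \ref{short} (short-root cohomology vanishing) with the Demazure exact sequences of Lemma \ref{dem1}.
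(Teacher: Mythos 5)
There are two genuine gaps here, and together they mean the argument as sketched would not go through. The first is your reduction of the case $V=\mathfrak g'_{<\alpha_n}$ to $V_{00}$ via the filtration $V_{00}\subset V_0\subset \mathfrak g'_{<\alpha_n}$: the intermediate module $V_0=\bigoplus_{\beta\le-\alpha_n}\mathfrak g_{\beta}$ satisfies neither clause of the hypothesis, and the conclusion of the lemma actually fails for it. Indeed, $\mathfrak g_{-\alpha_n}=\mathbb C_{-\alpha_n}$ is a one-dimensional indecomposable $B_{\alpha_n}$-summand of $V_0$ with $\langle-\alpha_n,\alpha_n\rangle=-2$, so $H^1(s_n,V_0)$ contains $H^1(s_n,\mathbb C_{-\alpha_n})=\mathbb C_{s_{\alpha_n}\cdot(-\alpha_n)}=\mathbb C_{0}$, and $0\notin R^-_{s}\setminus(-S)$. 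Your first exact sequence only shows that the weights of $H^1(w,\mathfrak g'_{<\alpha_n})$ are among those of $H^1(w,V_0)$, which now include the forbidden weight $0$; to remove it you would need to prove that the connecting map $H^0(w,\mathbb C_0)\to H^1(w,V_0)$ surjects onto the whole weight-$0$ subspace, which is not addressed. The disjunctive hypothesis exists precisely so that, when $V=\mathfrak g'_{<\alpha_n}$, the two-dimensional piece $\mathbb Ch(\alpha_n)\oplus\mathfrak g_{-\alpha_n}$ stays intact as an indecomposable $B_{\alpha_n}$-summand of the form $V'\otimes\mathbb C_{-\omega_n}$ with $\langle-\omega_n,\alpha_n\rangle=-1$, forcing its $H^1(s_n,-)$ to vanish; your filtration splits this summand and manufactures the spurious weight. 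The paper's proof never filters and treats both clauses of the hypothesis in one induction.

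The second gap is structural: you peel a simple reflection off the left of $w$, leaving the term $H^1(s_\gamma,H^0(u,V))$ with $u=s_\gamma w$. The module $H^0(u,V)$ is a large induced module that is not a submodule of $\mathfrak g'_{<\alpha_n}$, whose weights need not be roots and whose multiplicities need not be one, so its $B_\gamma$-indecomposable structure is not controlled by the hypothesis on $V$; your ``crucial point'' about supports of size two is an assertion where the actual work would be, and computing the weights of $H^0(u,V)$ for arbitrary $u\in W$ is exactly the hard computation that Sections 5 and 6 of the paper carry out only for very special $u$. The paper avoids this by peeling on the right, using the exact sequence $H^1(ws_i,H^0(s_i,V))\to H^1(w,V)\to H^0(ws_i,H^1(s_i,V))$ of \cite[Lemma 6.1]{Ka4}. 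In that arrangement the rank-one $H^1$ is applied to the original $V\subseteq\mathfrak g$, where the analysis is easy: the only contributing indecomposable $B_{\alpha_i}$-summands are one-dimensional of the form $\mathbb C_{-\beta}$ with $\beta$ a long non-simple positive root and $\langle\beta,\alpha_i\rangle=2$, yielding the single weight $-\beta+\alpha_i\in R^-_{s}\setminus(-S)$. The left-hand term is then handled by induction because $H^0(s_i,V)$ is again a $B$-submodule of $\mathfrak g'_{<\alpha_n}$ satisfying the hypothesis, and the right-hand term by Lemma \ref{lambda} applied to the filtration of $H^1(s_i,V)$ by characters in $R^-_{s}\setminus(-S)$. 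You should reorganize your induction along these lines; as written, neither the reduction between the two cases nor the treatment of the rank-one $H^1$ term can be completed.
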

\begin{proof}
 The proof is by induction on $l(w)$. Assume that $l(w)=1$. Then $w=s_i$ for some $1\leq i\leq n$.
 If $H^1(s_i, V)_{\mu}\neq 0$,
 then there exists an indecomposable $B_{\alpha_i}$-direct summand $V_1$ of $V$ such that $H^1(s_i, V_1)\neq 0$.
 By Lemma \ref{lemma2.4}, we have $V_1=V'\otimes \mathbb C_{a\omega_i}$ for some irreducible 
 $\tilde{L}_{\alpha_i}$-module $V'$
 and an integer $a$.
 Since $H^1(s_i, V_1)\neq 0$ and $G$ is of type $C_n$, by Lemma \ref{lemma2.3}, we have $dim(V_1)=1$ and $a=-2$. Further, we have  
 $$H^1(s_i, V_1)=V'\otimes H^1(s_i, -2\omega_i)=V'\otimes H^0(s_i, -2\omega_i+\alpha_i).$$
 Therefore, $H^1(s_i, V_1)=\mathbb C_{\mu_1+\alpha_i}$, where $\mu_1$
is the lowest weight of $V_1$. By the hypothesis on $V$, we have $\mu_1=-\beta$ for some $\beta \in R^+\setminus S$ with $\langle \beta, \alpha_i \rangle=2$.
Therefore $\beta$ is a long root and $-\beta+\alpha_i\in R^-_s\setminus (-S)$.

Assume that $l(w)>1$. Choose $1\leq i\leq n$ such that $l(ws_i)=l(w)-1$. By \cite[Lemma 6.1]{Ka4}, we have the following exact
sequence of $B$-modules:
$$H^1(ws_i, H^0(s_i, V))\longrightarrow H^1(w, V)\longrightarrow H^0(ws_i, H^1(s_i, V))\hspace{1cm} (6.1.1)$$

\underline{Claim:} $H^0(s_i, V)\cap (\mathbb C h(\alpha_n)\oplus \mathfrak g_{-\alpha_n})=0$.

Assume that $i=n$. By Lemma \ref{lemma2.4}, we have $$\mathbb Ch(\alpha_n)\oplus \mathfrak g_{-\alpha_n}=V_2\otimes \mathbb C_{-\omega_n},$$
 $V_2$ is the 2-dimensional irreducible $\tilde L_{\alpha_n}$-module.
Hence we have $$H^0(s_n, \mathbb Ch(\alpha_n)\oplus \mathfrak g_{-\alpha_n})=V_2\otimes H^0(s_n, \mathbb C_{-\omega_n})=0.$$

Assume that $i\neq n$. If $V=\mathfrak g'_{<\alpha_n}$, then $H^0(s_i, V)=V$.
Otherwise, since  $H^0(s_i, V)\subset V$, we have  $$H^0(s_i, V)\cap (\mathbb Ch(\alpha_n)\oplus \mathfrak g_{-\alpha_n})=0.$$
By induction on $l(w)$, if $H^1(ws_i, H^0(s_i, V))_{\mu}\neq 0$, then $\mu\in R^-_{s}\setminus (-S)$.

By above (as in the case of $l(w)=1$), 
there is a descending sequence of $B$-modules:
$$H^1(s_i, V)\supsetneq V^1 \supsetneq V^2\supsetneq \ldots \supsetneq V^r=0$$
such that $V^i/V^{i+1}\simeq \mathbb C_{\beta_i}$ for some $\beta_i\in R^-_s\setminus (-S)$.
By Lemma \ref{lambda}, if $H^0(ws_i, H^1(s_i, V))_{\mu}\neq 0$, then $\mu\in R^-_s\setminus (-S)$.
Hence by the above exact sequence (6.1.1), we conclude that if $H^1(w, V)_{\mu}\neq 0$ then $\mu\in R^-_s\setminus (-S)$. This completes the proof.
\end{proof}

 \begin{proposition}\label{H1alphan}
  Let $ u\in W$ and  an integer $1\leq a\leq n-2$ be such that $l(us_as_{a+1}\cdots s_{n})=l(u)+(n+1-a)$. Let $w=us_as_{a+1}\cdots s_{n}$. 
  Then we have:
  \begin{enumerate}
   \item If $u=id$, then $H^1(w, \alpha_n)=0$.
   \item $H^j(ws_n, \alpha_n)=0$ for all $j\geq 0$.
   \item $H^1(w, \alpha_n)=H^1(ws_n, \mathbb Ch(\alpha_n)\oplus \mathbb C_{-\alpha_n})$.
   \item $H^1(w, \alpha_n)_{\mu}= 0$ unless $\mu\in R^-_s\setminus (-S)$.
 \end{enumerate}
 \end{proposition}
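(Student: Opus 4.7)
My plan is to prove (2) first by a direct $\mathbb{P}^1$-fibration computation, deduce (3) from (2) via the defining SES of $H^0(s_n,\alpha_n)$, handle (1) as a consequence of (3) together with an iterative weight-chasing argument, and attack (4) using Lemma \ref{g'} combined with a short exact sequence relating $V_1$ to $\mathfrak{g}'_{<\alpha_n}$.

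For (2), I write $ws_n=\tau\, s_{n-1}$ with $\tau=us_a\cdots s_{n-2}$ (which ends in $s_{n-2}$ because $a\le n-2$). Since $\langle\alpha_n,\alpha_{n-1}\rangle=-2$, Lemma \ref{lemma2.3}(2) gives $H^0(s_{n-1},\alpha_n)=0$ and $H^1(s_{n-1},\alpha_n)=\mathbb{C}_{\alpha_n+\alpha_{n-1}}$. The $\mathbb{P}^1$-fibration $Z(ws_n,\cdot)\to Z(\tau,\cdot)$ then yields $H^0(ws_n,\alpha_n)=0$ and $H^j(ws_n,\alpha_n)\simeq H^{j-1}(\tau,\alpha_n+\alpha_{n-1})$ for $j\ge 1$. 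Because $\langle\alpha_n+\alpha_{n-1},\alpha_{n-2}\rangle=-1$ and $\tau$ ends in $s_{n-2}$, Lemma \ref{lemma2.2}(4) forces $H^j(\tau,\alpha_n+\alpha_{n-1})=0$ for all $j$, establishing (2). For (3), Lemma \ref{dem1} provides the short exact sequence $0\to V_1\to H^0(s_n,\alpha_n)\to\mathbb{C}_{\alpha_n}\to 0$ with $V_1=\mathbb{C}h(\alpha_n)\oplus\mathbb{C}_{-\alpha_n}$; Lemma \ref{lemma2.2}(1) identifies $H^j(w,\alpha_n)\simeq H^j(ws_n,H^0(s_n,\alpha_n))$, and the long exact sequence together with the vanishing from (2) yields $H^j(ws_n,V_1)\simeq H^j(w,\alpha_n)$; specializing to $j=1$ gives (3).

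For (1), by (3) it suffices to show $H^1(s_a\cdots s_{n-1},V_1)=0$. The short exact sequence $0\to\mathbb{C}_{-\alpha_n}\to V_1\to\mathbb{C}_0\to 0$ together with the vanishing of higher cohomology of the structure sheaf reduces this to $H^1(s_a\cdots s_{n-1},-\alpha_n)=0$. I would compute this by iteratively factoring $s_k$ on the right (for $k=n-1,n-2,\ldots,a+1$) via Lemma \ref{lemma2.2}(1), using at each step that every weight $\mu$ of $H^0(s_{k+1}\cdots s_{n-1},-\alpha_n)$ has the form $-\alpha_n-\sum_{i\ge k+1}c_i\alpha_i$ and hence $\langle\mu,\alpha_k\rangle=c_{k+1}\ge 0$, so $H^1(s_k,H^0(s_{k+1}\cdots s_{n-1},-\alpha_n))=0$. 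The same observation applied at the final step $k=a$ yields $H^1(s_a,H^0(s_{a+1}\cdots s_{n-1},-\alpha_n))=0$, completing (1).

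For (4), by (3) the claim is that the $T$-weights of $H^1(ws_n,V_1)$ lie in $R^-_s\setminus(-S)$. My plan is to use the $B$-module short exact sequence $0\to M\to\mathfrak{g}'_{<\alpha_n}\to V_1\to 0$, where $M=\bigoplus_{\beta\in R^+,\,\beta>\alpha_n}\mathfrak{g}_{-\beta}$; this is a $B$-submodule of $\mathfrak{g}'_{<\alpha_n}$ because every positive root has $\alpha_n$-coefficient at most $1$, and the quotient is isomorphic as a $B$-module to $V_1$. Lemma \ref{g'} applies both to $V=\mathfrak{g}'_{<\alpha_n}$ (via the second hypothesis) and to $V=M$ (whose weights lie strictly below $-\alpha_n$, so $M\cap(\mathbb{C}h(\alpha_n)\oplus\mathfrak{g}_{-\alpha_n})=0$); consequently both $H^1(ws_n,\mathfrak{g}'_{<\alpha_n})$ and $H^1(ws_n,M)$ have all their weights in $R^-_s\setminus(-S)$. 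From the induced long exact sequence
\[
H^1(ws_n,\mathfrak{g}'_{<\alpha_n})\to H^1(ws_n,V_1)\to H^2(ws_n,M)\to H^2(ws_n,\mathfrak{g}'_{<\alpha_n}),
\]
the weight statement for $H^1(ws_n,V_1)$ will follow once we bound the weights of $H^2(ws_n,M)$. I expect this last step to be the main technical obstacle: one would either extend the induction of Lemma \ref{g'} to $H^2$ via the same exact sequence used there, or sidestep it entirely by applying the shorter sequence $0\to\mathbb{C}_{-\alpha_n}\to V_1\to\mathbb{C}_0\to 0$ to present $H^1(ws_n,V_1)$ as a quotient of $H^1(ws_n,-\alpha_n)$ modulo the weight-$0$ image of $H^0(ws_n,\mathbb{C}_0)=\mathbb{C}$, and then tracking the weights of $H^1(ws_n,-\alpha_n)$ by the same iterative reduction used in part (1).
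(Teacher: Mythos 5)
Your parts (2) and (3) are correct and coincide with the paper's own argument: the same reduction $H^{j}(ws_n,\alpha_n)\simeq H^{j-1}(ws_ns_{n-1},\alpha_{n-1}+\alpha_n)$ killed by $\langle \alpha_{n-1}+\alpha_n,\alpha_{n-2}\rangle=-1$, and the same evaluation sequence $0\to K_1\to H^0(s_n,\alpha_n)\to\mathbb C_{\alpha_n}\to 0$ with $K_1=\mathbb Ch(\alpha_n)\oplus\mathbb C_{-\alpha_n}$ (your $V_1$). For (1) the paper only says ``similar to the proof of (2)''; your reduction via $0\to\mathbb C_{-\alpha_n}\to K_1\to\mathbb C_0\to 0$ to the vanishing of $H^1(s_a\cdots s_{n-1},-\alpha_n)$, established by peeling off $s_k$ from the left and checking $\langle\mu,\alpha_k\rangle\ge 0$ for every weight at every stage, is a complete and valid alternative.

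The genuine gap is in (4). Up to the long exact sequence $H^1(ws_n,\mathfrak g'_{<\alpha_n})\to H^1(ws_n,K_1)\to H^2(ws_n,M)$ you reproduce the paper's argument exactly (your $M$ is the paper's $K_2=\sum_{\mu<-\alpha_n}\mathfrak g_{\mu}$), but the step you defer as ``the main technical obstacle'' is the one that carries the whole proof: one needs $H^2(ws_n,K_2)=0$, which the paper imports from \cite[Lemma 6.2]{Ka4}. With that vanishing the map $H^1(ws_n,\mathfrak g'_{<\alpha_n})\to H^1(ws_n,K_1)$ is surjective, and Lemma \ref{g'} applied to $V=\mathfrak g'_{<\alpha_n}$ alone finishes the argument (your additional application of Lemma \ref{g'} to $M$ is correct but not needed). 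Neither of your proposed workarounds is carried out, and neither is obviously cheaper: extending the induction of Lemma \ref{g'} to $H^2$ would require an $H^2$-analogue of the exact sequence coming from \cite[Lemma 6.1]{Ka4}, i.e.\ essentially reproving \cite[Lemma 6.2]{Ka4}; and the route through $H^1(ws_n,-\alpha_n)$ trades the problem for controlling the weights of the first cohomology of the negative long root $-\alpha_n$ on $Z(ws_n,\cdot)$ for arbitrary $u$, which is not easier than the statement being proved. As written, part (4) is incomplete at precisely this point.
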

\begin{proof}
Proof of (2):
Since $l(ws_ns_{n-1})=l(ws_n)-1$, by Lemma \ref{lemma2.2}, we have
$$H^0(ws_n, \alpha_n)=0~~\mbox{and}~~$$ 
$$H^j(ws_n, \alpha_n)=H^{j-1}(ws_ns_{n-1}, H^0(s_{n-1}, s_{n-1}\cdot \alpha_n)) ~~\mbox{for all}~~ j\geq 1.$$
Since $s_{n-1}\cdot \alpha_n=\alpha_{n-1}+\alpha_n$ and  $H^0(s_{n-1}, \alpha_{n-1}+ \alpha_n)=\mathbb C_{\alpha_{n-1}+\alpha_n}$ , we have 
$$H^j(ws_n, \alpha_n)=H^{j-1}(ws_ns_{n-1},  \alpha_{n-1}+ \alpha_n)~~\mbox{for all}~~ j\geq 1.$$ 
Since $\langle \alpha_{n-1}+\alpha_n, \alpha_{n-2} \rangle =-1$ and $l(ws_ns_{n-1}s_{n-2})=l(ws_ns_{n-1})-1$, by Lemma \ref{lemma2.2}, we see that
 $H^{j-1}(ws_ns_{n-1}, \alpha_{n-1}+\alpha_n)=0$ for all $j\geq 1$.

Therefore, $H^j(ws_n, \alpha_n)=0$ for every $j\geq 0$. 
This completes the proof of (2).

Proof of (3):
 Consider the following short exact sequences of $B$-modules:
 $$0\longrightarrow K_1\longrightarrow H^0(s_n, \alpha_n)\longrightarrow \mathbb C_{\alpha_n}\longrightarrow 0 \hspace{1.5cm}(6.2.1)$$
 where $K_1$ is the kernel of the evaluation map $ev: H^0(s_n, \alpha_n)\longrightarrow \mathbb C_{\alpha_n}$.
 Note that $K_1=\mathbb Ch(\alpha_n)\oplus \mathbb C_{-\alpha_n}$.
 
By applying  $H^0(ws_n, -)$ to (6.2.1), we have the following long exact sequence of $B$-modules:

$\cdots \longrightarrow H^0(ws_n, \alpha_n) \longrightarrow H^1(ws_n, K_1)\longrightarrow H^1(ws_n, H^0(s_n, \alpha_n))\longrightarrow H^1(ws_n, \alpha_n)\longrightarrow \cdots$

By (2), we conclude that $$H^1(ws_n, K_1) = H^1(ws_n, H^0(s_n, \alpha_n))=H^1(w, \alpha_n).$$ 
 
 Proof of (4):
Let $K_2=\sum\limits_{\mu<-\alpha_n}{}\mathfrak g_{\mu}$. Clearly, $K_2$ is a $B$-submodule of $\mathfrak g'_{<\alpha_n}$ and 
$$\frac{\mathfrak g'_{<\alpha_n}}{K_2}\simeq K_1.$$
Then we have a following short exact sequence of $B$-modules:
 $$0\longrightarrow K_2\longrightarrow \mathfrak g'_{<\alpha_n} \longrightarrow K_1\longrightarrow 0 \hspace{1.5cm}(6.2.2)$$
 
 By applying $H^0(ws_n, -)$ to (6.2.2), we have the following long exact sequence of $B$-modules:
 
 $\cdots \longrightarrow H^1(ws_n, K_2)\longrightarrow H^1(ws_n, \mathfrak g'_{<\alpha_n}) \longrightarrow H^1(ws_n, K_1)\longrightarrow H^2(ws_n, K_2)\longrightarrow \cdots . $

 By \cite[Lemma 6.2]{Ka4}, we have $H^2(ws_n, K_2)=0$. 
Hence by Lemma \ref{g'} we see that  that if $H^1(ws_n, \mathfrak g'_{<\alpha_n} )_{\mu}\neq 0$, then $\mu\in R^-_s\setminus (-S)$.
Therefore, $H^1(ws_n, K_1)_{\mu}=0$ unless $\mu\in R^-_s\setminus (-S)$.
 
 By (3), we have $ H^1(w, \alpha_n)=H^1(ws_n, K_1).$ Hence we conclude  that $H^1(w, \alpha_n)_{\mu}=0$ unless $\mu\in R^-_{s}\setminus (-S)$.
 
 Proof of (1) is similar to the proof of (2).
 \end{proof}

\begin{lemma}\label{H^1(2)} Let $V=H^0(s_ns_{a_r}\cdots s_{n-1}, \mathbb Ch(\alpha_n)\oplus \mathbb C_{-\alpha_n})$. Then, 
 $H^1(s_{a_{r-1}}\cdots s_{n-1}, V)_{\mu}\neq 0$ if and if
  $\mu$ is of the form: \\ (i) $\mu=-(\sum\limits_{j=t}^{n}\alpha_j)$ for some $a_{r-1}\leq t \leq n-1$
or \\
(ii) $\mu=-(\alpha_n+2(\sum\limits_{j=t_1}^{n-1}\alpha_j)+\sum\limits^{t_1-1}_{j=t_2}\alpha_j)$ for some $a_{r-1}\leq t_2< t_1 \leq n-1$.
\end{lemma}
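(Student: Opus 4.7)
The plan is to proceed by decreasing induction on $j$ for $a_{r-1}\leq j\leq n-1$, with $\sigma_j:=s_j s_{j+1}\cdots s_{n-1}$, proving simultaneously: $(A_j)$ that $H^0(\sigma_j,V)$ has weights exactly $-(\alpha_n+2\sum_{l=t}^{n-1}\alpha_l)$ for $a_r\leq t\leq j-1$ and $-(\alpha_n+2\sum_{l=t_1}^{n-1}\alpha_l+\sum_{l=t_2}^{t_1-1}\alpha_l)$ for $a_r\leq t_2<t_1\leq j-1$; and $(B_j)$ that $H^1(\sigma_j,V)$ has the weights listed in the conclusion of the lemma with $a_{r-1}$ replaced by $j$; in both cases every weight has multiplicity one. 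The base case $j=n-1$ follows directly from Lemma \ref{V_m}(2) combined with Lemmas \ref{dem1} and \ref{lemma2.3}: the unique weight $\mu_*:=-(\alpha_n+2\alpha_{n-1})$ of $V$ with $\alpha_{n-1}^\vee$-pairing $-2$ yields $H^1(s_{n-1},V)=\mathbb{C}_{s_{n-1}\cdot\mu_*}=\mathbb{C}_{-(\alpha_{n-1}+\alpha_n)}$, the unique weight in $(B_{n-1})$, while the remaining weights of $V$ have pairing $0$ or $-1$ and yield $(A_{n-1})$.

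For the inductive step I apply the SES from Section 2 to the factorization $\sigma_j=s_j\cdot\sigma_{j+1}$:
\[
0\to H^1(s_j,H^0(\sigma_{j+1},V))\to H^1(\sigma_j,V)\to H^0(s_j,H^1(\sigma_{j+1},V))\to 0,
\]
together with $H^0(\sigma_j,V)=H^0(s_j,H^0(\sigma_{j+1},V))$. The crucial preliminary is to check that every $\alpha_j$-string in both $H^0(\sigma_{j+1},V)$ and $H^1(\sigma_{j+1},V)$ is a singleton: for each weight $\mu$ from the explicit lists $(A_{j+1})$ and $(B_{j+1})$, direct inspection shows that $\mu+\alpha_j$ is not again in the respective list. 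By Lemma \ref{lemma2.4} this forces every indecomposable $B_{\alpha_j}$-summand of these modules to be the one-dimensional $\mathbb{C}_\mu$, so Lemmas \ref{dem1}, \ref{lemma2.2}, and \ref{lemma2.3} apply term-by-term.

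The remaining bookkeeping proceeds as follows. In $H^0(\sigma_{j+1},V)$ the unique weight of $\alpha_j^\vee$-pairing $-2$ is $\mu_0:=-(\alpha_n+2\sum_{l=j}^{n-1}\alpha_l)$; by Lemma \ref{lemma2.2}(3) its summand contributes $\mathbb{C}_{s_j\cdot\mu_0}=\mathbb{C}_{-(\alpha_j+\alpha_n+2\sum_{l=j+1}^{n-1}\alpha_l)}$ to $H^1(s_j,H^0(\sigma_{j+1},V))$, precisely the $(t_2,t_1)=(j,j+1)$ case of $(B_j)$(ii); all other summands contribute zero. In $H^1(\sigma_{j+1},V)$ the weights with $\alpha_j^\vee$-pairing $+1$ are exactly $-\sum_{l=j+1}^n\alpha_l$ and $-(\alpha_n+2\sum_{l=t_1}^{n-1}\alpha_l+\sum_{l=j+1}^{t_1-1}\alpha_l)$ for $j+2\leq t_1\leq n-1$; by Lemma \ref{dem1}(2) each doubles into $\mu$ and $\mu-\alpha_j$, the latter being the same expression with the outer index shifted from $j+1$ to $j$. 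These new weights, together with the pairing-zero survivors in $H^0(s_j,H^1(\sigma_{j+1},V))$ and the extra weight from the left-hand term of the SES, exactly reproduce $(B_j)$. The analogous simpler bookkeeping for $H^0(\sigma_j,V)$, where only weights of pairing $\geq 0$ survive and those of pairing $-1$ or $-2$ are discarded, establishes $(A_j)$.

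The main obstacle is the singleton $\alpha_j$-string verification, requiring a clean case analysis separating the three families of weights occurring across $(A_{j+1})$ and $(B_{j+1})$: the long-root family $-(\alpha_n+2\sum_{l=t}^{n-1}\alpha_l)$, and the two short-root families $-(\alpha_n+2\sum_{l=t_1}^{n-1}\alpha_l+\sum_{l=t_2}^{t_1-1}\alpha_l)$ and $-\sum_{l=t}^n\alpha_l$. Once this combinatorial step is carried out exhaustively, every indecomposable $B_{\alpha_j}$-summand is guaranteed to be one-dimensional, and the remainder reduces to the routine application of the cohomological tools from Section 2.
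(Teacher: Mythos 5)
Your proposal is correct and follows essentially the same route as the paper: a decreasing recursion on the leftmost reflection using the short exact sequence $0\to H^1(s_j,H^0(\sigma_{j+1},V))\to H^1(\sigma_j,V)\to H^0(s_j,H^1(\sigma_{j+1},V))\to 0$, with the base case $j=n-1$ handled directly from Lemma \ref{V_m}(2) and the unique pairing-$(-2)$ weight $-(\alpha_n+2\sum_{l=j}^{n-1}\alpha_l)$ supplying the new $(t_2,t_1)=(j,j+1)$ weight at each step. The only cosmetic difference is that you carry the $H^0$ weight list $(A_j)$ along as part of a simultaneous induction, whereas the paper quotes it from Lemma \ref{H0ar-1}; your explicit singleton-$\alpha_j$-string check is a point the paper leaves implicit.
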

\begin{proof}
\underline{Case 1:} $a_{r-1}=n-1$.
By Lemma \ref{V_m}, it follows that  $V_{\mu}\neq 0$ if and only if $\mu$ is of the form: \\ 
(i) $\mu=-(\alpha_n+2(\sum\limits_{j=t}^{n-1}\alpha_j))$, where $a_r\leq t\leq n-1$ or \\
(ii) $\mu=-(\alpha_n+2(\sum\limits_{j=t_1}^{n-1}\alpha_j)+\sum\limits_{j=t_2}^{t_1-1}\alpha_j)$, where $a_r\leq t_2<t_1\leq n-1$.

If $t<n-1$(or $t_1<n-1$), then we have $\langle \mu, \alpha_{n-1} \rangle =0$. Hence by Lemma \ref{lemma2.2}, we see that 
$H^1(s_{n-1}, \mu)=0$. 

If $t=n-1$ in type (i), then   $\mu=-(\alpha_n+2\alpha_{n-1})$ and 
$\langle \mu, \alpha_{n-1} \rangle =-2$. Then by Lemma \ref{lemma2.2}, 
$$H^1(s_{n-1}, -(\alpha_n+2\alpha_{n-1}))=H^0(s_{n-1}, s_{n-1}\cdot (-(\alpha_n+2\alpha_{n-1}))).$$
Since $s_{n-1}\cdot (-(\alpha_n+2\alpha_{n-1}))=-(\alpha_n+\alpha_{n-1})$,
we see that $$H^1(s_{n-1}, -(\alpha+2\alpha_{n-1}))=\mathbb C_{-(\alpha_n+\alpha_{n-1})}.$$

If $t_1=n-1$ in type (ii), then $\mu=-(\alpha_n+2\alpha_{n-1}+\sum\limits_{j=t_2}^{n-2}\alpha_j)$ and $\langle \mu, \alpha_{n-1} \rangle =-1$. 
Then by Lemma \ref{lemma2.2}, 
we conclude that $H^1(s_{n-1}, \mu)=0$.

\underline{Case 2:}  $a_{r-1}\neq n-1$.
 Fix $a_{r-1}\leq i\leq n-2$. By recursion we assume that 
 
 $H^1(s_{i+1}\cdots s_{n-1}, V)_{\mu}\neq 0$ if and if $\mu=-(\sum\limits_{j=t}^{n}\alpha_j)$ for some  $i+1 \leq t \leq n-1$
or $\mu=-(\alpha_n+2(\sum\limits_{j=t_1}^{n-1}\alpha_j)+\sum\limits^{t_1-1}_{j=t_2}\alpha_j)$ for some $i+1\leq t_2< t_1 \leq n-1$.

By SES, we have the following short exact sequence of $B$-modules: 

$0\longrightarrow H^1(s_i, H^0(s_{i+1}\cdots s_{n-1}, V))\longrightarrow H^1(s_is_{i+1}\cdots s_{n-1}, V)\longrightarrow H^0(s_i, H^1(s_{i+1}\cdots s_{n-1}, V))\longrightarrow 0$.

By the above discussion, we see that $H^0(s_i, H^1(s_{i+1}\cdots s_{n-1}, V))_{\mu}\neq 0$ if and only if 
$\mu=-(\sum\limits_{j=t}^{n}\alpha_j)$, where $i \leq t \leq n-1$
or $\mu=-(\alpha_n+2(\sum\limits_{j=t_1}^{n-1}\alpha_j)+\sum\limits^{t_1-1}_{j=t_2}\alpha_j)$, where $i\leq t_2< t_1 \leq n-1$ and $t_1\geq i+2$.

Further, by Lemma \ref{H0ar-1}, we see that 
$H^1(s_i, H^0(s_{i+1}\cdots s_{n-1}, V))_{\mu}\neq 0$ if and only if $\mu=-(\alpha_n+2(\sum\limits_{j=i+1}^{n-1}\alpha_j)+ \alpha_i)$.
By the recursion, we conclude the proof of the lemma.
\end{proof}

Let $v_r=s_ns_{a_r}\cdots s_{n-1}$, $v_{r-1}=s_{a_{r-1}}\cdots s_{n-1}$ and $v_{r-2}=s_{a_{r-2}}\cdots s_{n-1}s_n$. Then we have
\begin{lemma}\label{H1(3)}\
 
 \begin{enumerate}

 \item $H^1(v_{r-1}v_rs_n, \alpha_n)_{\mu}\neq 0$ if and only if 
  $\mu$ is of the form: \\ (i) $\mu=-(\sum\limits_{j=t}^{n}\alpha_j)$ for some $a_{r-1}\leq t \leq n-1$
or \\
(ii) $\mu=-(\alpha_n+2(\sum\limits_{j=t_1}^{n-1}\alpha_j)+\sum\limits^{t_1-1}_{j=t_2}\alpha_j)$ for some $a_{r-1}\leq t_2< t_1 \leq n-1$.
 
 \item Let $w=v_{r-2}v_{r-1}v_rs_n$. Then, 
 $H^1(w, \alpha_n)_{\mu}\neq 0$ if and only if
 $\mu=-(\alpha_n+2(\sum\limits_{j=t_1}^{n-1}\alpha_j)+\sum\limits^{t_1-1}_{j=t_2}\alpha_j)$ for some $a_{r-1}\leq t_2< t_1 \leq a_{r-2}-1$.
 
 \end{enumerate}

 \end{lemma}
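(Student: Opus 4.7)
The plan is to prove both parts by applying Proposition \ref{H1alphan}(3) to replace $\alpha_n$ by the two-dimensional $B$-module $M:=\mathbb{C}h(\alpha_n)\oplus\mathbb{C}_{-\alpha_n}$, and then using iterated SES to reduce everything to the $H^0$ and $H^1$ computations already performed in Lemma \ref{H0ar-1} and Lemma \ref{H^1(2)}.

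For (1), the expression $v_{r-1}v_rs_n$ ends in $s_n$, and the factorization $v_{r-1}v_rs_n=u\cdot s_{a_r}s_{a_r+1}\cdots s_n$ with $u=v_{r-1}s_n$ satisfies the length hypothesis of Proposition \ref{H1alphan}(3), giving $H^1(v_{r-1}v_rs_n,\alpha_n)=H^1(v_{r-1}v_r,M)$. Writing $M=V_2\otimes\mathbb{C}_{-\omega_n}$ as in Lemma \ref{lemma2.4}, Lemma \ref{lemma2.3} forces $H^j(s_n,M)=0$ for all $j$ and $H^1(s_i,M)=0$ for $i\ne n$, so iterating SES through $v_r=s_ns_{a_r}\cdots s_{n-1}$ yields $H^1(v_r,M)=0$. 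The SES for the decomposition $v_{r-1}\cdot v_r$ therefore collapses to $H^1(v_{r-1}v_r,M)=H^1(v_{r-1},H^0(v_r,M))=H^1(v_{r-1},V)$, and the description in (1) is exactly that of Lemma \ref{H^1(2)}.

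For (2), Proposition \ref{H1alphan}(3) applied to $w=v_{r-2}v_{r-1}v_rs_n$ (with $u=v_{r-2}v_{r-1}s_n$, $a=a_r$) reduces the question to $H^1(v_{r-2}v_{r-1}v_r,M)$. The SES for the decomposition $v_{r-2}\cdot(v_{r-1}v_r)$ becomes
\[
0\to H^1(v_{r-2},H^0(v_{r-1},V))\to H^1(v_{r-2}v_{r-1}v_r,M)\to H^0(v_{r-2},H^1(v_{r-1},V))\to 0,
\]
using the identifications from part (1). A direct inspection of the weights listed in Lemma \ref{H0ar-1} shows that $\langle\mu,\alpha_j\rangle=0$ for every weight $\mu$ of $H^0(v_{r-1},V)$ and every $j\in\{a_{r-2},\ldots,n\}$, since the $\alpha$-supports of such $\mu$ lie in $\{\alpha_1,\ldots,\alpha_{a_{r-2}-1}\}\cup\{\alpha_n\}$ with the $\alpha_n$-coefficient forcing the pairing with $\alpha_n$ to vanish; Lemma \ref{lemma2.3}(1) applied iteratively then gives $H^1(v_{r-2},H^0(v_{r-1},V))=0$. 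For the cokernel I iterate SES through $v_{r-2}=s_{a_{r-2}}\cdots s_{n-1}s_n$ starting from the innermost factor $s_n$: the type (i) weights $-\sum_{j=t}^n\alpha_j$ of $H^1(v_{r-1},V)$ pair to $-1$ with $\alpha_n$ and die at the innermost step by Lemma \ref{lemma2.2}(4); the type (ii) weights $-(\alpha_n+2\sum_{j=t_1}^{n-1}\alpha_j+\sum_{j=t_2}^{t_1-1}\alpha_j)$ pair to $0$ with $\alpha_n$ and subsequently to $-1$ with $\alpha_j$ precisely when $t_1=j$, as $j$ runs from $n-1$ down to $a_{r-2}$. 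Hence the surviving weights are exactly those with $a_{r-1}\le t_2<t_1\le a_{r-2}-1$, matching the statement.

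The main technical obstacle is to verify at each SES step that the relevant module decomposes as a direct sum of indecomposable $B_{\alpha_j}$-summands to which Lemma \ref{lemma2.3} applies directly, so that the weight-by-weight pairing analysis above really governs the $s_{\alpha_j}$-cohomology. For each applied simple root $\alpha_j$ this reduces to checking that no two weights of the relevant module are related by $\pm\alpha_j$; for the explicit weight lists coming from Lemmas \ref{H0ar-1} and \ref{H^1(2)} this is a short case analysis using the bounds on the indices, ensuring every weight line is its own indecomposable summand and validating the calculations above.
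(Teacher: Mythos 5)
Your argument is correct and follows essentially the same route as the paper's: Proposition \ref{H1alphan}(3) to pass from $\alpha_n$ to $\mathbb{C}h(\alpha_n)\oplus\mathbb{C}_{-\alpha_n}$, collapse of the SES to the identifications $H^1(v_{r-1}v_rs_n,\alpha_n)\cong H^1(v_{r-1},V)$ and $H^1(w,\alpha_n)\cong H^0(v_{r-2},H^1(v_{r-1},V))$ via the vanishing of the $H^1(s_i,H^0(\cdots))$ terms, and then the same weight-by-weight recursion through $v_{r-2}$ using Lemmas \ref{H0ar-1} and \ref{H^1(2)}. One small correction: the weights $\mu$ of $H^0(v_{r-1},V)$ do \emph{not} have support contained in $\{\alpha_1,\ldots,\alpha_{a_{r-2}-1}\}\cup\{\alpha_n\}$ (they are supported on all of $\{\alpha_{t_2},\ldots,\alpha_n\}$); the vanishing $\langle\mu,\alpha_j\rangle=0$ for $a_{r-2}\leq j\leq n$ that you need is nevertheless correct, because the coefficients of $\alpha_{j-1},\alpha_j,\alpha_{j+1}$ in such $\mu$ are all equal to $2$ in that range (with the standard $C_n$ adjustment at $\alpha_{n-1}$ and $\alpha_n$), which is exactly what the paper extracts from Lemma \ref{H0ar-1}.
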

\begin{proof}
\underline{Step 1:} We prove $$H^1(v_{r-2}v_{r-1}, V)=H^0(v_{r-2}, H^1(v_{r-1}, V)),$$ where $V$ is as in Lemma \ref{H^1(2)}.
 
By SES we have the following short exact sequence of $B$-modules:

$0\longrightarrow H^1(s_n, H^0(v_{r-1}, V))\longrightarrow H^1(s_nv_{r-1}, V) \longrightarrow H^0(s_n, H^1(v_{r-1}, V))\longrightarrow 0.$

By Lemma \ref{H0ar-1}, if $H^0(v_{r-1}, V)_{\mu}\neq 0$, then we see that $\langle \mu, \alpha_i \rangle =0$ for all 
$a_{r-2}\leq i \leq n$. 
By Lemma \ref{lemma2.2}, we have 
$$H^1(s_n, H^0(v_{r-1}, V))=0~~ \mbox{and}$$  $$ H^0(s_n, H^0(v_{r-1}, V))=H^0(v_{r-1}, V).$$

Hence by the above short exact sequence, we see that   $$H^1(s_nv_{r-1}, V)= H^0(s_n, H^1(v_{r-1}, V)).$$

By recursion, we have  $$H^1(s_{i+1}\cdots s_nv_{r-1}, V)= H^0(s_{i+1}\cdots s_n, H^1(v_{r-1}, V)).$$
By SES, we have the following short exact sequence of $B$-modules:

$0\longrightarrow H^1(s_i, H^0(s_{i+1}\cdots s_nv_{r-1}, V))\longrightarrow H^1(s_i\cdots s_nv_{r-1}, V) \longrightarrow H^0(s_i, H^1(s_{i+1}\cdots s_n v_{r-1}, V))\longrightarrow 0.$

Since $H^0(s_{i+1}\cdots s_nv_{r-1}, V)= H^0(v_{r-1}, V)$ and 
$H^1(s_i, H^0(s_{i+1}\cdots s_nv_{r-1}, V)=0$, we have 
$$H^1(s_is_{i+1}\cdots s_nv_{r-1}, V)=H^0(s_is_{i+1}\cdots s_n, H^1(v_{r-1}, V)).$$
Therefore, we conclude that  $$H^1(v_{r-2}v_{r-1}, V)=H^0(v_{r-2}, H^1(v_{r-1}, V)).$$
This completes the proof of Step 1.

\underline{Step 2:} We prove
$$H^1(v_{r-1}v_r, \mathbb Ch(\alpha_n)\oplus \mathbb C_{-\alpha_n})=H^1(v_{r-1}, V)$$ and 
$$H^1(v_{r-2}v_{r-1}v_r, \mathbb Ch(\alpha_n)\oplus \mathbb C_{-\alpha_n})=H^1(v_{r-2}v_{r-1}, V).$$
First note that $H^i(s_{j}\cdots s_{n-1}, \mathbb Ch(\alpha_n)\oplus \mathbb C_{-\alpha_n})=0$ for each $a_r\leq j\leq n-1$ for all $i\geq 1$
and $H^i(s_ns_{a_r}\cdots s_{n-1}, \mathbb Ch(\alpha_n)\oplus \mathbb C_{-\alpha_n})=0$ for all $i\geq 1$.
By using SES repeatedly, we see that $$H^1(v_{r-1}v_r, \mathbb Ch(\alpha_n)\oplus \mathbb C_{-\alpha_n})=H^1(v_{r-1}, H^0(v_r, \mathbb Ch(\alpha_n)\oplus \mathbb C_{-\alpha_n}))$$
and $$H^1(v_{r-2}v_{r-1}v_r, \mathbb Ch(\alpha_n)\oplus \mathbb C_{-\alpha_n})=H^1(v_{r-2}v_{r-1}, H^0(v_r, \mathbb Ch(\alpha_n)\oplus \mathbb C_{-\alpha_n})).$$
Hence we have $$H^1(v_{r-1}v_r, \mathbb Ch(\alpha_n)\oplus \mathbb C_{-\alpha_n})= H^1(v_{r-1}, V)$$ and
$$H^1(v_{r-2}v_{r-1}v_r, \mathbb Ch(\alpha_n)\oplus \mathbb C_{-\alpha_n})= H^1(v_{r-2}v_{r-1}, V).$$

From Step 1, Step 2 and Proposition \ref{H1alphan}(3), we see that  
$$H^1(v_{r-1}v_rs_n, \alpha_n)= H^1(v_{r-1}, V)\hspace{1cm}(6.4.1)$$ and 
$$H^1(w, \alpha_n)= H^0(v_{r-2}, H^1(v_{r-1}, V)).\hspace{1cm}(6.4.2)$$ 
 By Lemma \ref{H^1(2)},  $H^1(v_{r-1}, V)_{\mu}\neq 0$ if and if
  $\mu$ is of the form: \\ (i) $\mu=-(\sum\limits_{j=t}^{n}\alpha_j)$, where $a_{r-1}\leq t \leq n-1$
or \\
(ii) $\mu=-(\alpha_n+2(\sum\limits_{j=t_1}^{n-1}\alpha_j)+\sum\limits^{t_1-1}_{j=t_2}\alpha_j)$, where $a_{r-1}\leq t_2< t_1 \leq n-1$.

Proof of (1) is immediate from (6.4.1). 

Proof of (2):
If $\mu$ is of type (i), then $\langle \mu, \alpha_n \rangle=-1$. By Lemma \ref{lemma2.2}, (6.4.2) and SES, we see that $H^1(w, \alpha_n)_{\mu}=0.$

Fix $a_{r-2}\leq l \leq n-1$. By recursion, we assume that $H^0(s_{l+1}\ldots s_n, H^1(v_{r-1}, V))_{\mu}\neq 0$ if and only if 
$\mu=-(\alpha_n+2(\sum\limits_{j=t_1}^{n-1}\alpha_j)+\sum\limits^{t_1-1}_{j=t_2}\alpha_j)$ for some  $a_{r-1}\leq t_2< t_1 \leq l$.

On the other hand, we have $\langle -(\alpha_n+2(\sum\limits_{j=t_1}^{n-1}\alpha_j)+\sum\limits^{t_1-1}_{j=t_2}\alpha_j), \alpha_l \rangle =-1$.
Therefore, $H^0(s_l, H^0(s_{l+1}\cdots s_n, H^1(v_{r-1}, V)))_{\mu}\neq 0$ if and only if 
$\mu=-(\alpha_n+2(\sum\limits_{j=t_1}^{n-1}\alpha_j)+\sum\limits^{t_1-1}_{j=t_2}\alpha_j)$ for some $a_{r-1}\leq t_2< t_1 \leq l-1$.
%If $\mu$ is of type (ii), then $\langle \mu, \alpha_n \rangle=0$. By Lemma \ref{lemma2.2}, (6.4.2) and SES, we see that $H^1(w, \alpha_n)_{\mu}\neq 0.$
Now the proof of (2) follows by recursion.
%Hence we conclude the proof of the lemma.
\end{proof}

    \begin{proposition}\label{H^1}  
 Let $3\leq r \leq k$. Then,
$H^1(w_r, \alpha_n)_{\mu}\neq 0$ if and only if 
$\mu= (\prod\limits_{j=t_1}^n s_j)(\prod\limits_{j=t_2}^{n-1} s_j)(\alpha_{n-1})$
for some $a_{r-1}\leq t_2 < t_1\leq a_{r-2}-1.$
In such a case, $dim(H^1(w_r, \alpha_n)_{\mu})=1$.
  \end{proposition}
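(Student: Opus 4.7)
The plan is to reduce Proposition~\ref{H^1} to Lemma~\ref{H1(3)}(2), which computes $H^1(w,\alpha_n)$ for $w := v_{r-2}v_{r-1}v_rs_n$. The key identity is $[a_{r-2},n][a_{r-1},n][a_r,n] = v_{r-2}v_{r-1}v_rs_n$ as reduced expressions (just by regrouping letters), so $w_r = u' \cdot w$ is a reduced factorization with $u' := [a_1,n]\cdots[a_{r-3},n]$ (taken to be trivial when $r=3$). When $r=3$, $w_r = w$ and Lemma~\ref{H1(3)}(2) gives the result; a short Cartan computation in type $C_n$ verifies $(\prod_{j=t_1}^{n}s_j)(\prod_{j=t_2}^{n-1}s_j)(\alpha_{n-1}) = -(\alpha_n + 2\sum_{j=t_1}^{n-1}\alpha_j + \sum_{j=t_2}^{t_1-1}\alpha_j)$ whenever $t_2<t_1$, reconciling the two weight formulas.

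For $r \geq 4$, I plan to iterate SES along the simple reflections of $u'$ (all with index $\geq a_{r-3}$), showing at each step that the $H^1$-term vanishes while the $H^0$-term reproduces the input. This hinges on a uniform Cartan vanishing: for every weight $\mu$ of $H^0(w,\alpha_n)$ or $H^1(w,\alpha_n)$ and every $i \geq a_{r-3}$, one has $\langle \mu,\alpha_i^\vee\rangle = 0$. The weights of $H^0(w,\alpha_n)$ are obtained by applying the argument of Proposition~\ref{H0alphan} to $w$ viewed as a three-piece product (factor $w = [a_{r-2},n]\cdot[a_{r-1},n][a_r,n]$ and invoke Lemma~\ref{H0ar-1}), while those of $H^1(w,\alpha_n)$ come directly from Lemma~\ref{H1(3)}(2). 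In both cases the support of $\mu$ lies in $\{t_2,t_2+1,\ldots,n\}$ with $t_2 \leq a_{r-1}-1$, whereas $i \geq a_{r-3} \geq a_{r-2}+1 \geq a_{r-1}+2$; the coefficient pattern of $\mu$ (namely $-2$ on the long-root tail $[t_1,n-1]$, $-1$ at index $n$, and $-1$ or $0$ elsewhere), combined with this separation, makes the Cartan pairing cancel.

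Given this vanishing, Lemma~\ref{lemma2.4} forces each $B_{\alpha_i}$-indecomposable summand of $H^0(w,\alpha_n)$ and of $H^1(w,\alpha_n)$ to be a one-dimensional character $\mathbb{C}_\mu$ with $\langle \mu,\alpha_i^\vee\rangle = 0$; Lemma~\ref{lemma2.3}(1) then yields $H^0(s_{\alpha_i},V) = V$ and $H^1(s_{\alpha_i},V) = 0$ for $V \in \{H^0(w,\alpha_n), H^1(w,\alpha_n)\}$. Iterating SES along $u'$ now telescopes to the $B$-module isomorphism $H^1(w_r,\alpha_n) \cong H^1(w,\alpha_n)$, and Lemma~\ref{H1(3)}(2) concludes the proof, with each weight appearing with multiplicity one. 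The main obstacle is controlling the Cartan vanishing uniformly at the endpoints $i = n-1$ and $i = n$; the potentially problematic scenario $t_1 = n-1$ (which would produce a non-zero pairing at $\alpha_{n-1}$) is excluded precisely because $a_{r-3} \geq t_1+3$ forces $t_1 \leq n-3$ whenever $r \geq 4$.
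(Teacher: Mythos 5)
Your proposal is correct and follows essentially the same route as the paper: the paper also factors $w_r = u\cdot v$ with $v=[a_{r-2},n][a_{r-1},n][a_r,n]=v_{r-2}v_{r-1}v_rs_n$, iterates SES along $u$ using the vanishing of the Cartan pairings $\langle\mu,\alpha_j\rangle$ for $j\geq a_{r-3}$ (so that $H^1(w_r,\alpha_n)=H^0(u,H^1(v,\alpha_n))=H^1(v,\alpha_n)$), and then quotes Lemma \ref{H1(3)}(2) together with the same identity $(\prod_{j=t_1}^{n}s_j)(\prod_{j=t_2}^{n-1}s_j)(\alpha_{n-1})=-(\alpha_n+2\sum_{j=t_1}^{n-1}\alpha_j+\sum_{j=t_2}^{t_1-1}\alpha_j)$. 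The only quibble is the final arithmetic: the separation one actually gets is $a_{r-3}\geq a_{r-2}+1\geq t_1+2$ (not $t_1+3$), which is still enough since the pairing $\langle\mu,\alpha_i\rangle$ can only be non-zero for $i\in\{t_2-1,t_2,t_1-1,t_1\}$.
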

  \begin{proof}
  Let $u=(\prod\limits_{j=a_1}^n s_j)(\prod\limits_{j=a_2}^n s_j)(\prod\limits_{j=a_3}^{n} s_j)\cdots (\prod\limits_{j=a_{r-3}}^{n} s_j)$
  and let $v=(\prod\limits_{j=a_{r-2}}^n s_j)(\prod\limits_{j=a_{r-1}}^n s_j)(\prod\limits_{j=a_r}^{n} s_j)$.
 Observe that $w_r=uv$, $l(w_r)=l(uv)=l(u)+l(v)$.
 
 \underline {Claim:} $H^1(w_r, \alpha_n)=H^0(u, H^1(v, \alpha_n))$.
 
 By SES we have the following short exact sequence of $B$-modules:
 
 $0\longrightarrow H^1(s_n, H^0(v, \alpha_n))\longrightarrow H^1(s_nv, \alpha_n))\longrightarrow H^0(s_n, H^1(v, \alpha_n))\longrightarrow 0.$

 By Lemma \ref{H0alphan}, if $H^0(v, \alpha_n)_{\mu}\neq 0$, then we see that $\langle \mu, \alpha_j\rangle =0$ for all $a_{r-3} \leq j \leq n-1$.  
 Therefore, the vector bundle $\mathcal L(H^0(v, \alpha_n))$ on $X(u')$ is trivial for each $u'\leq u$.
 Thus, $H^i(u', H^0(v, \alpha_n))=0$ for each $u'\leq u$ and for all $i\geq 1$.

 Hence by using SES recursively, we see that $$H^1(uv, \alpha_n)=H^0(u, H^1(v, \alpha_n)).$$ 
 Therefore, we have $H^1(w_r, \alpha_n)=H^0(u, H^1(v, \alpha_n))$.
 
 By Lemma \ref{H1(3)}, $H^1(v, \alpha_n)_{\mu}\neq 0$ if and only if $\mu$ is of the form:
 
 $\mu=-(\alpha_n+2(\sum\limits_{j=t_1}^{n-1}\alpha_j)+\sum\limits^{t_1-1}_{j=t_2}\alpha_j)$, where $a_{r-1}\leq t_2< t_1 \leq a_{r-2}-1$.
 
 Note that $\langle \mu, \alpha_j \rangle=0$ for all $a_{r-2}\leq j \leq n$. Therefore, 
 $\mathcal L(H^1(v, \alpha_n))$ is the trivial vector bundle on $X(u)$. Hence $H^0(u, H^1(v, \alpha_n))=H^1(v, \alpha_n)$.
Thus, we conclude that $H^1(w_r, \alpha_n)_{\mu}\neq 0$ if and only if $\mu$ is of the form: \\ 
   $\mu=-(\alpha_n+2(\sum\limits_{j=t_1}^{n-1}\alpha_j)+\sum\limits^{t_1-1}_{j=t_2}\alpha_j)$, where $a_{r-1}\leq t_2< t_1 \leq a_{r-2}-1$.
 Note that $\mu=-(\alpha_n+2(\sum\limits_{j=t_1}^{n-1}\alpha_j)+\sum\limits^{t_1-1}_{j=t_2}\alpha_j)=(\prod\limits_{j=t_1}^n s_j)(\prod\limits_{j=t_2}^{n-1} s_j)(\alpha_{n-1})$.
 This completes the proof of the proposition.
   \end{proof}

\begin{corollary}\label{6.6}
Let $2\leq r\leq k$. If $H^1(w_r, \alpha_n)_{\mu}\neq 0$, then $H^0(\tau_r, \alpha_{n-1})_{\mu}\neq 0$.
\end{corollary}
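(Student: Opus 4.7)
The plan is to match the explicit weight sets of $H^1(w_r,\alpha_n)$ and $H^0(\tau_r,\alpha_{n-1})$, splitting into the cases $r\geq 3$ and $r=2$.

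For $r\geq 3$, I would apply Proposition~\ref{H^1}, which describes $H^1(w_r,\alpha_n)_\mu\neq 0$ by the condition $\mu=(\prod_{j=t_1}^n s_j)(\prod_{j=t_2}^{n-1}s_j)(\alpha_{n-1})$ with $a_{r-1}\leq t_2<t_1\leq a_{r-2}-1$, together with Lemma~\ref{Weights}(1), which describes $H^0(\tau_r,\alpha_{n-1})_\mu\neq 0$ by the same form but with $a_r\leq t_2<t_1$ and $a_{r-1}\leq t_1\leq a_{r-2}-1$. Given any $(t_1,t_2)$ satisfying the $H^1$ conditions, the strict inequality $a_r<a_{r-1}$ combined with $a_{r-1}\leq t_2$ forces $a_r\leq t_2$, and $a_{r-1}\leq t_2<t_1$ forces $a_{r-1}\leq t_1$; these together with $t_1\leq a_{r-2}-1$ are precisely the $H^0$ conditions, so the implication follows.

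For $r=2$, Proposition~\ref{H^1} does not apply, but Lemma~\ref{H1(3)}(1) remains valid: its part~(1) uses only $v_{r-1}$ and $v_r$, not $v_{r-2}$. Verifying that $v_1 v_2 s_n=w_2$ as elements of $W$, Lemma~\ref{H1(3)}(1) yields that $H^1(w_2,\alpha_n)_\mu\neq 0$ only for $\mu=-(\sum_{l=t}^n\alpha_l)$ with $a_1\leq t\leq n-1$, or $\mu=-(\alpha_n+2\sum_{l=t_1}^{n-1}\alpha_l+\sum_{l=t_2}^{t_1-1}\alpha_l)$ with $a_1\leq t_2<t_1\leq n-1$. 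On the other hand, Lemma~\ref{H0}(3) gives the weights of $H^0(\tau_2,\alpha_{n-1})$ as $s_t\cdots s_{n-1}s_n s_j\cdots s_{n-1}(\alpha_{n-1})$ with $a_2\leq j<t$ and $a_1\leq t\leq n$. In the standard realization of $C_n$, both parametrizations produce negative short roots of the form $-(e_j+e_t)$, and the $H^1$ indexing range $a_1\leq j<t\leq n$ lies inside the $H^0$ range $a_2\leq j,\ a_1\leq t$ because $a_2<a_1$; hence once again every weight of $H^1(w_2,\alpha_n)$ occurs in $H^0(\tau_2,\alpha_{n-1})$.

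The main obstacle is recognizing that Lemma~\ref{H1(3)}(1) extends to $r=2$ (its proof does not invoke $v_{r-2}$, so the setup makes sense even when the chain of $v_j$'s stops at $v_1$); once this is established, both cases reduce to routine indexing inequalities, and the corollary follows uniformly.
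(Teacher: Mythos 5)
Your proposal is correct and follows essentially the same route as the paper: for $3\leq r\leq k$ the paper also combines Proposition \ref{H^1} with Lemma \ref{Weights}(1) and the inequality check you spell out, and for $r=2$ it likewise cites Lemma \ref{H1(3)}(1) together with Lemma \ref{H0}(3). Your explicit verification that Lemma \ref{H1(3)}(1) is applicable when $r=2$ (since part (1) never uses $v_{r-2}$) and the translation of both weight sets into roots of the form $-(e_j+e_t)$ are details the paper leaves implicit, but the argument is the same.
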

\begin{proof} \underline{Case 1:} $3\leq r\leq k$.
 If $H^1(w_r, \alpha_n)_{\mu}\neq 0$ then by Proposition \ref{H^1}, we have 
 $\mu= (\prod\limits_{j=t_1}^n s_j)(\prod\limits_{j=t_2}^{n-1} s_j)(\alpha_{n-1})$
for some integers $t_2 < t_1$ such that  $a_{r-1}\leq t_2 < t_1\leq a_{r-2}-1.$  Then by Lemma \ref{Weights}, we have $H^0(\tau_r, \alpha_{n-1})_{\mu}\neq 0$.

\underline{Case 2:} $r=2$.
The proof follows from Lemma \ref{H0}(3) and Lemma \ref{H1(3)}(1).
\end{proof}

\begin{corollary}\label{H1toH0}
 Let $2\leq r\leq k$.
 If $H^1(w_r, \alpha_n)_{\mu}\neq 0$, then $H^0(w_{r-1}, \alpha_n)_{\mu} \neq 0$.
\end{corollary}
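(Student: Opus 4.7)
The plan is to split the argument according to whether $r\geq 3$ or $r=2$, using the previously established propositions on the weights of $H^0$ and $H^1$ of the relative tangent bundles.

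For $r\geq 3$, by Proposition \ref{H^1}, any nonzero weight $\mu$ of $H^1(w_r,\alpha_n)$ has the form $\mu=-(\alpha_n+2\sum_{j=t_1}^{n-1}\alpha_j+\sum_{j=t_2}^{t_1-1}\alpha_j)$ with $a_{r-1}\leq t_2<t_1\leq a_{r-2}-1$. Since $2\leq r-1\leq k$, Proposition \ref{H0alphan} applies with index $r-1$, and such $\mu$ is precisely a weight of the second family described there for $H^0(w_{r-1},\alpha_n)$. Hence $H^0(w_{r-1},\alpha_n)_\mu\neq 0$ immediately.

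For $r=2$, neither Proposition \ref{H^1} nor Proposition \ref{H0alphan} directly describes the modules involved, so I would argue separately as follows. By Lemma \ref{H1(3)}(1), the weights $\mu$ of $H^1(w_2,\alpha_n)$ are either $\mu=-\sum_{j=t}^n\alpha_j$ for $a_1\leq t\leq n-1$, or $\mu=-(\alpha_n+2\sum_{j=t_1}^{n-1}\alpha_j+\sum_{j=t_2}^{t_1-1}\alpha_j)$ for $a_1\leq t_2<t_1\leq n-1$. Since $\mathbb{C}_{-\alpha_n}=V_2$ sits as a $B$-submodule inside $H^0(s_n,\alpha_n)$ by Lemma \ref{dem1}(3), left-exactness of the functor $H^0([a_1,n-1],-)$ combined with the SES identity (1) yields a $B$-module inclusion
\[
H^0([a_1,n-1],\mathbb{C}_{-\alpha_n})\hookrightarrow H^0([a_1,n-1],H^0(s_n,\alpha_n))=H^0(w_1,\alpha_n),
\]
so it suffices to produce each target $\mu$ as a weight of $H^0([a_1,n-1],\mathbb{C}_{-\alpha_n})$. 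Starting from the weight $-\alpha_n=-2e_n$ (in the standard $C_n$ coordinates) and iteratively applying Lemma \ref{dem1}(3) for $s_{n-1},s_{n-2},\ldots,s_{a_1}$ (together with Lemma \ref{lemma2.3}(1) to lift weights through indecomposable $B_{\alpha_j}$-summands), the weight set of $H^0([a_1,n-1],\mathbb{C}_{-\alpha_n})$ turns out to consist of the long negative roots $-2e_t$ for $a_1\leq t\leq n$ together with the short negative roots $-e_{t_1}-e_{t_2}$ for $a_1\leq t_2<t_1\leq n$. Both families from Lemma \ref{H1(3)}(1) translate in these coordinates to short negative roots of the latter type (with $t_1=n$ for type (i) and $t_1\leq n-1$ for type (ii)), so the required containment follows.

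The main obstacle is the $r=2$ case, where one has to explicitly compute the weights of $H^0([a_1,n-1],\mathbb{C}_{-\alpha_n})$ and verify that no required weight gets lost to an $H^1$-contribution at any stage of the iterative Demazure process. This works out because every weight encountered along the way pairs non-negatively with the next simple coroot about to be applied, so Lemma \ref{dem1}(3) continues to produce the needed weights in $H^0$ without any cohomological obstruction.
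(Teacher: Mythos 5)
Your proof is correct and takes essentially the same route as the paper: for $r\geq 3$ it is the paper's argument verbatim (compare Proposition \ref{H^1} against Proposition \ref{H0alphan}(ii)), and for $r=2$ the paper likewise compares the weights of $H^1(w_2,\alpha_n)=H^1(s_{a_1}\cdots s_{n-1},H^0(s_ns_{a_2}\cdots s_{n-1},\mathbb{C}h(\alpha_n)\oplus\mathbb{C}_{-\alpha_n}))$ (Lemma \ref{H^1(2)}, equivalently Lemma \ref{H1(3)}(1)) with those of $H^0(w_1,\alpha_n)=H^0(s_{a_1}\cdots s_{n-1},\mathbb{C}h(\alpha_n)\oplus\mathbb{C}_{-\alpha_n})$. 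The only (harmless) deviation is that you recompute the relevant weights of $H^0(w_1,\alpha_n)$ by hand through the submodule $H^0([a_1,n-1],\mathbb{C}_{-\alpha_n})$, whereas the paper reads them off directly from Lemma \ref{V_m}(1); your computed weight set agrees with that lemma.
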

\begin{proof}
\underline{Case 1:} $3\leq r \leq k$.
If $H^1(w_r, \alpha_n)_{\mu}\neq 0$, by Proposition \ref{H^1}, $\mu= -(\alpha_n+2(\sum\limits_{j=t_1}^n \alpha_j)+
\sum\limits_{j=t_2}^{t_1-1}\alpha_j)$ for some integers $t_2 < t_1$ such that  $a_{r-1}\leq t_2<t_1 \leq a_{r-2}-1$.
For each such $\mu$, by Proposition \ref{H0alphan} (ii) 
, we see that $H^0(w_{r-1}, \alpha_n)_{\mu} \neq 0$.

\underline{Case 2:} $r=2$. First note that 
$$H^0(w_1, \alpha_n)=H^0(s_{a_1}\cdots s_{n-1}, \mathbb Ch(\alpha_n)\oplus \mathbb C_{-\alpha_n}) ~~\mbox{and}~~$$
$$H^1(w_2, \alpha_n)=H^1(s_{a_1}\cdots s_{n-1}, H^0(s_ns_{a_2}\cdots s_{n-1}, \mathbb Ch(\alpha_n)\oplus \mathbb C_{-\alpha_n}).$$ 

Now the proof follows from Lemma \ref{V_m}(1) and Lemma \ref{H^1(2)}.
\end{proof}

Recall $[a_k, n]=s_{a_k}\cdots s_n$ and $[a_k, n-1]=s_{a_k}\cdots s_{n-1}$.
Then, we have 

\begin{lemma}\label{w_k}
 $H^1(w_k[a_k, n], \alpha_n)=H^0(w_k[a_k, n-1], \alpha_{n-1})$.
\end{lemma}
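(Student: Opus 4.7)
The plan is to compute $H^{1}(w_{k}[a_{k},n],\alpha_{n})$ by adapting the argument of Proposition \ref{H^1}, treating $w_{k}[a_{k},n]$ as if it were $w_{k+1}$ with ``virtual'' index $a_{k+1}:=a_{k}=1$. First, I decompose $w_{k}[a_{k},n]=u\cdot v$ with
\[
u \;=\; \prod\limits_{l=1}^{k-2}[a_{l},n], \qquad v \;=\; [a_{k-1},n][a_{k},n][a_{k},n]
\]
(for $k=2$, take $u=\mathrm{id}$). Using the same commuting identity that rewrites $[a_{r-2},n][a_{r-1},n][a_{r},n]$ as $v_{r-2}v_{r-1}v_{r}s_{n}$ in Section 6, the element $v$ has the alternative reduced expression $v_{k-1}v_{k}v_{k+1}s_{n}$ with $v_{k-1}=s_{a_{k-1}}\cdots s_{n-1}s_{n}$, $v_{k}=s_{a_{k}}\cdots s_{n-1}$ and $v_{k+1}=s_{n}s_{a_{k}}\cdots s_{n-1}$.

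I would then compute $H^{1}(v,\alpha_{n})$ by repeating the SES cascade of Lemma \ref{H1(3)}(2) verbatim. The only inputs used there are the Cartan-matrix computations of $\langle\mu,\alpha_{j}^{\vee}\rangle$ for the intermediate weights, together with the vanishing $H^{i}(s_{j}\cdots s_{n-1},\mathbb{C}h(\alpha_{n})\oplus\mathbb{C}_{-\alpha_{n}})=0$ for $i\geq 1$; neither relies on a strict inequality between the indices. This yields $H^{1}(v,\alpha_{n})_{\mu}\neq 0$ precisely when
\[
\mu \;=\; -\Bigl(\alpha_{n}+2\sum\limits_{j=t_{1}}^{n-1}\alpha_{j}+\sum\limits_{j=t_{2}}^{t_{1}-1}\alpha_{j}\Bigr)
\]
for some $a_{k}\leq t_{2}<t_{1}\leq a_{k-1}-1$, each with multiplicity one.

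Following the proof of Proposition \ref{H^1}, I next lift from $v$ to $uv=w_{k}[a_{k},n]$. An analogue of Proposition \ref{H0alphan} for the triple $(a_{k-1},a_{k},a_{k})$ controls $H^{0}(v,\alpha_{n})$, so iterated SES gives $H^{1}(w_{k}[a_{k},n],\alpha_{n})=H^{0}(u,H^{1}(v,\alpha_{n}))$. A direct $C_{n}$-Cartan check (using $t_{1}\leq a_{k-1}-1\leq n-2$ when $k\geq 3$) shows that every weight $\mu$ above satisfies $\langle\mu,\alpha_{l}^{\vee}\rangle=0$ for $a_{k-2}\leq l\leq n$; therefore $\mathcal{L}(H^{1}(v,\alpha_{n}))$ is a trivial vector bundle on $X(u)$, and $H^{0}(u,H^{1}(v,\alpha_{n}))=H^{1}(v,\alpha_{n})$.

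Finally, Lemma \ref{H0u_1} describes $H^{0}(w_{k}[a_{k},n-1],\alpha_{n-1})$ by exactly the same set of weights, each of multiplicity one, via the identity
\[
\Bigl(\prod\limits_{j=t_{1}}^{n}s_{j}\Bigr)\Bigl(\prod\limits_{j=t_{2}}^{n-1}s_{j}\Bigr)(\alpha_{n-1}) \;=\; -\Bigl(\alpha_{n}+2\sum\limits_{j=t_{1}}^{n-1}\alpha_{j}+\sum\limits_{j=t_{2}}^{t_{1}-1}\alpha_{j}\Bigr).
\]
Both modules sit as cyclic $B$-submodules inside a common $H^{0}(\sigma,\beta_{0})$ (as in Lemmas \ref{Weights} and \ref{H0u_1}), in which every short negative root occurs with multiplicity one; since a $B$-submodule of such a multiplicity-free module is determined by its $T$-weight support, the two modules coincide. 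The main difficulty I expect is verifying carefully that the SES cascades in Lemmas \ref{H^1(2)}--\ref{H1(3)} and Proposition \ref{H^1} continue to work under the coincidence $a_{r-1}=a_{r}=a_{k}$; this should be routine since those proofs depend only on the structure of the reduced expressions and on the vanishing statements, not on the strict inequality.
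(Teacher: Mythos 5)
Your route is genuinely different from the paper's, and it has a real gap at the final step. The paper's proof is a short direct argument: since $\langle \alpha_n,\alpha_{n-1}\rangle=-2$ and $l(w_k[a_k,n]s_{n-1})=l(w_k[a_k,n])-1$, Lemma \ref{lemma2.2}(3) converts the $H^1$ into an $H^0$, namely $H^1(w_k[a_k,n],\alpha_n)=H^0(w_k[a_k,n],s_{n-1}\cdot\alpha_n)=H^0(w_k[a_k,n-1],H^0(s_n,\alpha_n+\alpha_{n-1}))$, and then the short exact sequence $0\to\mathbb C_{\alpha_{n-1}}\to H^0(s_n,\alpha_n+\alpha_{n-1})\to\mathbb C_{\alpha_{n-1}+\alpha_n}\to 0$ together with the vanishing $H^0(w_k[a_k,n-1],\alpha_{n-1}+\alpha_n)=0$ (from $\langle\alpha_n+\alpha_{n-1},\alpha_{n-2}\rangle=-1$) identifies this with $H^0(w_k[a_k,n-1],\alpha_{n-1})$ as a $B$-module. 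No weight computation is needed.

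The gap in your proposal is the last step, where you upgrade equality of $T$-weight supports to equality of $B$-modules. Your multiplicity-free argument is sound \emph{provided} both modules are exhibited as $B$-submodules of one and the same multiplicity-free module $H^0(\sigma,\beta_0)$. For $H^0(w_k[a_k,n-1],\alpha_{n-1})$ this embedding is established in the paper (proof of Lemma \ref{Weights}), but for $H^1(w_k[a_k,n],\alpha_n)$ you assert the embedding without justification, and neither Lemma \ref{Weights} nor Lemma \ref{H0u_1} (which you cite) says anything about an $H^1$ of the weight $\alpha_n$. Producing such an embedding would essentially force you to first rewrite the $H^1$ as an $H^0$ via the reflection identity --- which is the whole content of the paper's proof. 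Without it, your argument only yields a $T$-module isomorphism (equality of weight supports with multiplicities), which happens to suffice for the way Lemma \ref{w_k} is used in Corollary \ref{6.10} and Lemma \ref{7.4}, but does not prove the stated $B$-module identity. A secondary, lesser concern: the entire weight computation rests on re-running Lemmas \ref{H^1(2)}--\ref{H1(3)} and Propositions \ref{H0alphan}, \ref{H^1} in the degenerate regime $a_{r-1}=a_r=a_k$. The edge cases do appear to work out (e.g.\ Lemma \ref{H0ar-1} then gives $H^0(v_{r-1},V)=0$ because the index range $a_r\le t\le a_{r-1}-1$ becomes empty, which keeps Step 1 of Lemma \ref{H1(3)} valid), but this is a nontrivial amount of verification that you have deferred rather than carried out.
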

\begin{proof}
 Note that $l(w_k[a_k, n]s_{n-1})=l(w_k[a_k, n])-1$.
 Since $\langle \alpha_n, \alpha_{n-1} \rangle =-2$, by Lemma \ref{lemma2.2}, we have 
 $$H^1(w_k[a_k, n], \alpha_n)=H^0(w_k[a_k, n], s_{\alpha_{n-1}}\cdot \alpha_n) .\hspace{0.8cm}(6.8.1)$$
 By SES and $s_{\alpha_{n-1}}\cdot \alpha_n=\alpha_n+\alpha_{n-1}$, we have 
  $$H^0(w_k[a_k, n], s_{\alpha_{n-1}}\cdot \alpha_n)=H^0(w_k[a_k, n-1], H^0(s_{n}, \alpha_{n-1}+\alpha_n)).\hspace{0.8cm}(6.8.2)$$
By applying  $H^0(w_k[a_k, n-1],-)$ to the following short exact sequence, 
$$0\longrightarrow \mathbb C_{\alpha_{n-1}}\longrightarrow H^0(s_n, \alpha_n+\alpha_{n-1})\longrightarrow \mathbb C_{\alpha_{n-1}+\alpha_n}\longrightarrow 0.$$
 We obtain the following long exact sequence:
 
$0\longrightarrow H^0(w_k[a_k, n-1], \alpha_{n-1})\longrightarrow H^0(w_k[a_k, n-1], H^0(s_n, \alpha_n+\alpha_{n-1}))
\longrightarrow H^0(w_k[a_k, n-1], \alpha_{n-1}+\alpha_n)\longrightarrow H^1(w_k[a_k, n-1], \alpha_{n-1})\longrightarrow \cdots .$

Note that $\langle \alpha_n+\alpha_{n-1}, \alpha_{n-1}\rangle =0$ and 
$\langle \alpha_n+\alpha_{n-1}, \alpha_{n-2}\rangle =-1$. Hence by Lemma \ref{lemma2.2} and SES, we see that 
$$H^0(w_k[a_k, n-1], \alpha_{n-1}+\alpha_n)=0.$$
Therefore, we have $$H^0(w_k[a_k, n-1], \alpha_{n-1})=H^0(w_k[a_k, n-1], H^0(s_n, \alpha_n+\alpha_{n-1})).\hspace{0.8cm}(6.8.3)$$
Hence by using (6.8.1), (6.8.2) and (6.8.3), we  conclude that $$H^1(w_k[a_k, n], \alpha_n)=H^0(w_k[a_k, n-1], \alpha_{n-1}).\qedhere$$
\end{proof}

Let $1\leq r \leq k$. Let $M_r:=\{\mu\in X(T): H^1(w_r, \alpha_n)_{\mu}\neq 0\}$ and $M_0:=\{\mu\in X(T): H^1(w_k[a_k, n], \alpha_n)_{\mu}\neq 0\}$.
Then, we have 

\begin{corollary}\label{6.10} \

\begin{enumerate}
 \item 
  $M_r\cap M_{r'} =\emptyset$ whenever  $r\neq r'$.
\item $M_0\cap M_r = \emptyset$ for every $1\leq r \leq k$.
\end{enumerate}
 \end{corollary}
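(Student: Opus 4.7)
The plan is to extract from the preceding results a weight-level description of each $M_r$ and of $M_0$, and then to deduce disjointness by comparing the parameter ranges that label the weights.

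First I would dispose of the degenerate index $r=1$: Proposition \ref{H1alphan}(1) applied with $u=\mathrm{id}$ gives $H^1(w_1,\alpha_n)=0$, so $M_1=\emptyset$, which makes (1) trivial whenever $r=1$ or $r'=1$ and disposes of $r=1$ in (2). For the remaining indices, introduce the shorthand
\[\mu(t_1,t_2):=-\Bigl(\alpha_n+2\textstyle\sum_{j=t_1}^{n-1}\alpha_j+\sum_{j=t_2}^{t_1-1}\alpha_j\Bigr),\]
which coincides with $(\prod_{j=t_1}^n s_j)(\prod_{j=t_2}^{n-1}s_j)(\alpha_{n-1})$ by a direct computation. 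Proposition \ref{H^1} then reads
\[M_r=\{\mu(t_1,t_2):a_{r-1}\leq t_2<t_1\leq a_{r-2}-1\}\qquad(3\leq r\leq k),\]
and, using that $w_2=v_1v_2s_n$ in the notation of Lemma \ref{H1(3)}, Lemma \ref{H1(3)}(1) (or equivalently the $r=2$ step in the proof of Corollary \ref{H1toH0}) together with Lemma \ref{H^1(2)} gives
\[M_2=\{-\textstyle\sum_{j=t}^n\alpha_j:a_1\leq t\leq n-1\}\cup\{\mu(t_1,t_2):a_1\leq t_2<t_1\leq n-1\}.\]
Finally, Lemma \ref{w_k} combined with Lemma \ref{H0u_1} yields
\[M_0=\{\mu(t_1,t_2):a_k\leq t_2<t_1\leq a_{k-1}-1\}.\]

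The main step is then a disjointness check. The weight $\mu(t_1,t_2)$ determines $(t_1,t_2)$ uniquely, because $t_1$ is the smallest $j\in\{1,\ldots,n-1\}$ whose coefficient in $\mu(t_1,t_2)$ equals $-2$ and $t_2$ is the smallest $j$ with nonzero coefficient. In particular, the short-root weights $-\sum_{j=t}^n\alpha_j$ occurring in $M_2$ carry no coefficient equal to $-2$, so they cannot coincide with any $\mu(t_1',t_2')$; hence they lie in no $M_{r}$ with $r\neq 2$ and in no $M_0$. For the $\mu$-weights, disjointness reduces to disjointness of the $t_1$-ranges $[a_{r-1}+1,\,a_{r-2}-1]$ for $2\leq r\leq k$ (with the convention $a_0=n+1$; the $r=2$ range is actually $[a_1+1,n-1]\subseteq[a_1+1,a_0-1]$) together with $[a_k+1,\,a_{k-1}-1]$ for $M_0$. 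Because $n+1=a_0>a_1>\cdots>a_k=1$, the open intervals $(a_{r-1},a_{r-2})$ for $2\leq r\leq k$ are pairwise disjoint, and $(a_k,a_{k-1})$ is disjoint from each of them, yielding both (1) and (2).

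The argument is essentially bookkeeping. The only subtlety I anticipate is the $r=2$ case, where Proposition \ref{H^1} does not apply directly and where the additional short-root weights $-\sum_{j=t}^n\alpha_j$ must be separately distinguished from every $\mu(t_1,t_2)$ via the ``no coefficient $-2$'' criterion; once this is noted, the disjointness of the $t_1$-intervals does all the work.
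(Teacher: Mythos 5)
Your proof is correct and follows essentially the same route as the paper: both reduce the claim to the explicit $(t_1,t_2)$-parametrization of the weights $\mu(t_1,t_2)=-(\alpha_n+2\sum_{j=t_1}^{n-1}\alpha_j+\sum_{j=t_2}^{t_1-1}\alpha_j)$ and to the pairwise disjointness of the resulting $t_1$-ranges forced by $a_0>a_1>\cdots>a_k$, after discarding $M_1=\emptyset$. The only cosmetic difference is that you quote the exact $H^1$-weight descriptions (Proposition \ref{H^1} and Lemma \ref{H1(3)}(1)) directly and handle the extra short-root weights in $M_2$ by the ``no coefficient $-2$'' observation, whereas the paper routes through Corollary \ref{6.6} together with the $H^0$-computations of Lemmas \ref{Weights} and \ref{H0u_1}; both yield the same parameter intervals and hence the same conclusion.
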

\begin{proof} Note that by Proposition \ref{H1alphan}(1), $H^1(w_1, \alpha_n)=0$.
  By Lemma \ref{w_k}, we have $$H^1(w_k[a_k, n], \alpha_n)=H^0(w_k[a_k, n-1], \alpha_{n-1}).$$
 Now the proof follows from  Lemma \ref{Weights}, Lemma \ref{H0u_1} and   Corollary \ref{6.6}.
\end{proof}

\begin{lemma}\label{lemma4.9} Let $u, w\in W$, let $v:=(\prod\limits_{j=1}^{n}s_j)^{l}$  for some positive integer $l\leq n$ such that $w=uv$ and $l(w)=l(u)+l(v)$. 
If $l\geq 3$, then  $H^i(w, \alpha_{n})=0$ for all $i\geq 0$.
\end{lemma}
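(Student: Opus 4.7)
The plan is to reduce the statement first to the case $u=\mathrm{id}$, and then to the case $l=3$, after which it remains to prove directly that $H^{i}(c^{3},\alpha_{n})=0$ for all $i\geq 0$, where $c=s_{1}s_{2}\cdots s_{n}$. The reduction rests on the following general principle: if $\widetilde w=\sigma\widetilde u$ is reduced and $H^{j}(\widetilde u,V)=0$ for every $j\geq 0$, then $H^{i}(\widetilde w,V)=0$ for every $i\geq 0$. This follows from the short exact sequence
\[
0\to H^{1}(s_{\sigma},H^{i-1}(\widetilde u,V))\to H^{i}(\widetilde w,V)\to H^{0}(s_{\sigma},H^{i}(\widetilde u,V))\to 0
\]
obtained from the $\mathbb P^{1}$-fibration (the higher-degree analogue of the SES of Section 2, as in \cite[Lemma 6.1]{Ka4}): both outer terms vanish by hypothesis, forcing the middle to vanish. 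Iterating this over the letters of $u$, we reduce to $u=\mathrm{id}$. Since $c^{l}=c^{l-3}\cdot c^{3}$ is reduced for $l\geq 3$, a further application of the same principle (with the roles of $u$ and $v$ played by $c^{l-3}$ and $c^{3}$) reduces the lemma to showing $H^{i}(c^{3},\alpha_{n})=0$ for all $i\geq 0$.

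For the main computation, write $c^{3}=c^{2}\cdot s_{1}\cdots s_{n}$ and apply Lemma \ref{lemma2.2}(1) with last letter $s_{n}$ (using $\langle\alpha_{n},\alpha_{n}\rangle=2$) to obtain $H^{i}(c^{3},\alpha_{n})=H^{i}(c^{3}s_{n},H^{0}(s_{n},\alpha_{n}))$, where $c^{3}s_{n}=c^{2}\cdot s_{1}\cdots s_{n-1}$. By Lemma \ref{dem1}(3), the $B$-module $H^{0}(s_{n},\alpha_{n})$ carries a filtration $0\subsetneq \mathbb C_{-\alpha_{n}}\subsetneq K_{1}\subsetneq H^{0}(s_{n},\alpha_{n})$ with successive graded pieces $\mathbb C_{-\alpha_{n}},\mathbb C_{0},\mathbb C_{\alpha_{n}}$, where $K_{1}=\mathbb C h(\alpha_{n})\oplus\mathbb C_{-\alpha_{n}}$. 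Applying Proposition \ref{H1alphan}(2) with $u'=c^{2}$, $a=1$, $w=c^{3}$ yields $H^{j}(c^{3}s_{n},\mathbb C_{\alpha_{n}})=0$ for all $j\geq 0$. Combined with the rationality-of-BSDH vanishing $H^{j}(c^{3}s_{n},\mathbb C_{0})=\mathbb C\delta_{j,0}$, the two long exact sequences coming from the filtration reduce the entire statement to showing that $H^{*}(c^{3}s_{n},-\alpha_{n})$ is concentrated in degree $1$, equals $\mathbb C$ there, and that the connecting homomorphism $\mathbb C=H^{0}(c^{3}s_{n},\mathbb C_{0})\to H^{1}(c^{3}s_{n},-\alpha_{n})$ is an isomorphism.

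The hard part will be this final computation of $H^{*}(c^{3}s_{n},-\alpha_{n})$. I would proceed by applying Lemma \ref{lemma2.2}(1) once more, this time with last letter $s_{n-1}$ of $c^{3}s_{n}$ (note $\langle -\alpha_{n},\alpha_{n-1}\rangle=2$), to rewrite $H^{j}(c^{3}s_{n},-\alpha_{n})=H^{j}(c^{2}\cdot s_{1}\cdots s_{n-2},H^{0}(s_{n-1},-\alpha_{n}))$, and then exploit the Demazure filtration of $H^{0}(s_{n-1},-\alpha_{n})$ from Lemma \ref{dem1}(3), whose three graded pieces carry weights $-\alpha_{n}$, $-(\alpha_{n}+\alpha_{n-1})$, and $-(\alpha_{n}+2\alpha_{n-1})$. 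The middle weight is the negative of a short root not in $S$, so Corollary \ref{short} kills all $H^{i}$ for $i\geq 1$ on this piece; the two remaining long-root graded pieces must be handled by iterating the SES in the style of the proofs of Lemma \ref{H^1(2)} and Lemma \ref{H1(3)}. The principal technical obstacle is the bookkeeping of the resulting nested long exact sequences and the explicit identification of the connecting homomorphisms, in particular verifying that the distinguished generator of $H^{1}(c^{3}s_{n},-\alpha_{n})\cong\mathbb C$ matches the image of $1\in H^{0}(c^{3}s_{n},\mathbb C_{0})$ under the connecting map.
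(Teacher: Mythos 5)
Your two reductions (to $u=\mathrm{id}$ via the principle that total vanishing on $\widetilde u$ propagates to $\sigma\widetilde u$, and then to $l=3$ via $c^{l}=c^{l-3}\cdot c^{3}$) are correct, and your bookkeeping with the Demazure filtration of $H^{0}(s_{n},\alpha_{n})$ is also correct: granting $H^{j}(c^{3}s_{n},\alpha_{n})=0$ (Proposition \ref{H1alphan}(2)) and $H^{j}(c^{3}s_{n},\mathbb C_{0})=\mathbb C\delta_{j,0}$, the lemma is indeed equivalent to $H^{*}(c^{3}s_{n},-\alpha_{n})$ being $\mathbb C$ concentrated in degree $1$ with the connecting map $H^{0}(c^{3}s_{n},\mathbb C_{0})\to H^{1}(c^{3}s_{n},-\alpha_{n})$ an isomorphism. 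But that is precisely where the content of the lemma lies, and your proposal stops there: the computation of $H^{*}(c^{3}s_{n},-\alpha_{n})$ and, crucially, the identification of the connecting homomorphism are only sketched as a plan and acknowledged as ``the principal technical obstacle.'' Verifying that a connecting map is an isomorphism is exactly as hard as showing $H^{*}(c^{3}s_{n},K_{1})=0$ directly (the two statements are equivalent via the same long exact sequence), so the reduction has not actually made progress on the core difficulty. As it stands, this is a genuine gap, not a complete proof.

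The paper avoids all of this by peeling simple reflections off the \emph{right} end of $v=c^{l}$ instead of filtering $H^{0}(s_{n},\alpha_{n})$. The key observation is that $c^{r}(\alpha_{j})<0$ for $n+1-r\leq j\leq n$, so for $l\geq 3$ one has $l(vs_{n-1})=l(v)-1$ and $l(vs_{n-1}s_{n-2})=l(v)-2$; this is the only place $l\geq 3$ enters. Then $\langle\alpha_{n},\alpha_{n-1}\rangle=-2$ gives $H^{0}(s_{n-1},\alpha_{n})=0$, hence $H^{0}(v,\alpha_{n})=H^{0}(vs_{n-1},H^{0}(s_{n-1},\alpha_{n}))=0$ and $H^{1}(v,\alpha_{n})=H^{0}(vs_{n-1},H^{1}(s_{n-1},\alpha_{n}))=H^{0}(vs_{n-1},\mathbb C_{\alpha_{n-1}+\alpha_{n}})$, which vanishes because $\langle\alpha_{n-1}+\alpha_{n},\alpha_{n-2}\rangle=-1$ and $s_{n-2}$ can be peeled off next; the cases $i\geq 2$ are covered by \cite[Corollary 6.4]{Ka4}. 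If you want to salvage your route, I would recommend replacing your final step by this right-hand peeling applied to $c^{3}s_{n}=c^{2}s_{1}\cdots s_{n-1}$ with the weight $-\alpha_{n}$ (noting $\langle-\alpha_{n},\alpha_{n-1}\rangle=2$ does let you apply Lemma \ref{lemma2.2}(1), but you must then control all three graded pieces of $H^{0}(s_{n-1},-\alpha_{n})$ and still pin down the connecting map) --- or, more simply, abandon the filtration of $H^{0}(s_{n},\alpha_{n})$ altogether and argue as the paper does.
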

\begin{proof}
First note that by SES, $$H^0(w, \alpha_n)=H^0(u, H^0(v, \alpha_n)).$$ By similar arguments as in the proof of Lemma \ref{H1(3)}, we see that $$H^1(w, \alpha_n)=H^0(u, H^1(v, \alpha_n)).$$
We now prove that $H^0(v, \alpha_n)=0$ and $H^1(v, \alpha_n)=0$.
Let $c=\prod\limits_{j=1}^ns_j$.
Note that for each $1\leq r \leq n$,  $c^r(\alpha_j)<0$ for $n+1-r\leq j \leq n$. 
In particular, we have $l(vs_{n-1})=l(v)-1$ and $l(vs_{n-1}s_{n-2})=l(v)-2$.

Therefore, by Lemma \ref{lemma2.2} and using SES, we have $$H^0(v, \alpha_n)=H^0(vs_{n-1}, H^0(s_{n-1}, \alpha_n))=0.$$
Further, by Lemma \ref{lemma2.2}, we have $$H^1(v, \alpha_n)=H^0(vs_{n-1}, H^1(s_{n-1}, \alpha_n))=H^0(vs_{n-1}, \mathbb C_{\alpha_{n-1}+\alpha_n}).$$
Since $\langle \alpha_{n-1}+\alpha_n, \alpha_{n-2} \rangle =-1$ and $l(vs_{n-1}s_{n-2})=l(vs_{n-1})-1; $ by Lemma \ref{lemma2.2}, we have 
$H^0(vs_{n-1}, \alpha_{n-1}+\alpha_n)=0$.
Hence $H^1(v, \alpha_n)=0$.

Therefore, by \cite[Corollary 6.4]{Ka4},  we see that $H^i(v, \alpha_n)=0$ for all $i\geq 0$.
Hence, we conclude that $H^i(w, \alpha_n)=0$ for all $i\geq 0$.
 \end{proof}

\section{Cohomology modules of the tangent bundle of $Z(w, \underline i)$}

Let $w\in W$ and let $w =s_{i_1}s_{i_2}\cdots s_{i_r}$ 
be a reduced expression for $w$ and let $\underline i=(i_1, i_2, \ldots, i_r)$.
Let $\tau=s_{i_1}s_{i_2}\cdots s_{i_{r-1}}$ and
$\underline i'=(i_1, i_2, \ldots, i_{r-1})$. 

Recall the following long exact sequence of $B$-modules from \cite{CKP} (see \cite[Proposition 3.1]{CKP}):

 $0\longrightarrow H^0(w, \alpha_{i_r})\longrightarrow H^0(Z(w, \underline i), T_{(w,\underline i)})\longrightarrow H^0(Z(\tau, \underline i'), T_{(\tau, \underline i')})\longrightarrow H^1(w, \alpha_{i_r})\longrightarrow 
 H^1(Z(w, \underline i), T_{(w,\underline i)})\longrightarrow H^1(Z(\tau, \underline i), T_{(\tau,\underline i')})
 \longrightarrow H^2(w, \alpha_{i_r})\longrightarrow H^2(Z(w, \underline i), T_{(w,\underline i)}) \longrightarrow  H^2(Z(\tau, \underline i'), T_{(\tau, \underline i')})\longrightarrow H^3(w, \alpha_{i_r})\longrightarrow \cdots$.

 By \cite[Corollary  6.4]{Ka4}, we have $H^j(w, \alpha_{i_r})=0$ for every $j\geq 2$. 
 Thus we have the following exact sequence of $B$-modules:
  
   $0\longrightarrow H^0(w, \alpha_{i_r})\longrightarrow H^0(Z(w, \underline i), T_{(w,\underline i)})\longrightarrow H^0(Z(\tau, \underline i'), T_{(\tau, \underline i')})\longrightarrow H^1(w, \alpha_{i_r})\longrightarrow 
 H^1(Z(w, \underline i), T_{(w,\underline i)})\longrightarrow H^1(Z(\tau, \underline i), T_{(\tau,\underline i')})
 \longrightarrow 0.$

 Now onwards we call this exact sequence by LES.

Let $w_0=s_{j_1}s_{j_2}\cdots s_{j_N}$ be a reduced expression of $w_0$ and $\underline j=(j_1, j_2, \ldots, j_N)$ such that $(j_1, j_2, \ldots, j_r)=\underline i$.
 
 \begin{lemma}\label{T}
  The natural homomorphism $$f: H^1(Z(w_0, \underline j), T_{(w_0, \underline j)})\longrightarrow H^1(Z(w, \underline i), T_{(w, \underline i)})$$ of $B$-modules is surjecive.
 \end{lemma}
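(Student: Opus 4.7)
The plan is to prove this by induction on $N-r$, repeatedly using the long exact sequence LES that ends with the surjection onto the $H^1$ of the truncated expression.

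For $r\leq k\leq N$, set $\tau_k:=s_{j_1}s_{j_2}\cdots s_{j_k}$ and $\underline j_k:=(j_1,j_2,\ldots,j_k)$, so that $\tau_r=w$, $\underline j_r=\underline i$, $\tau_N=w_0$ and $\underline j_N=\underline j$. For each $r\leq k\leq N-1$, apply the LES to the pair $(\tau_{k+1},\underline j_{k+1})$ and its truncation $(\tau_k,\underline j_k)$. The exactness at the last nonzero term of the LES gives a surjective $B$-module map
$$
g_k:H^1(Z(\tau_{k+1},\underline j_{k+1}),T_{(\tau_{k+1},\underline j_{k+1})})\twoheadrightarrow H^1(Z(\tau_k,\underline j_k),T_{(\tau_k,\underline j_k)})\longrightarrow 0.
$$
This is the essential input and uses only what is already recorded in Section 7 (it follows from the LES together with the vanishing $H^j(w,\alpha_{i_r})=0$ for $j\geq 2$).

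Next I would observe that the natural birational morphism $Z(w_0,\underline j)\to Z(w,\underline i)$ factors as the composition of the $\mathbb P^1$-fibrations
$$
Z(\tau_N,\underline j_N)\xrightarrow{f_N}Z(\tau_{N-1},\underline j_{N-1})\xrightarrow{f_{N-1}}\cdots\xrightarrow{f_{r+1}}Z(\tau_r,\underline j_r),
$$
where $f_{k+1}$ is precisely the projection discussed in Section 2 whose relative tangent sheaf gives rise to the LES. In particular the natural homomorphism $f$ is the composition $g_r\circ g_{r+1}\circ\cdots\circ g_{N-1}$, because the maps in the LES between the $H^1$'s of the tangent bundles are exactly the pullback maps induced by $f_{k+1}$ (this is how the LES is derived in \cite[Proposition 3.1]{CKP}).

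Finally, since a composition of surjective $B$-module homomorphisms is surjective, we conclude that $f$ itself is surjective, which proves the lemma. The only step that needs a little care is the identification of the connecting map $H^1(Z(\tau_{k+1},\underline j_{k+1}),T)\to H^1(Z(\tau_k,\underline j_k),T)$ appearing in the LES with the pullback induced by $f_{k+1}$; this is a standard consequence of the short exact sequence of tangent bundles
$$
0\longrightarrow T_{Z(\tau_{k+1},\underline j_{k+1})/Z(\tau_k,\underline j_k)}\longrightarrow T_{Z(\tau_{k+1},\underline j_{k+1})}\longrightarrow f_{k+1}^{\ast}T_{Z(\tau_k,\underline j_k)}\longrightarrow 0
$$
together with the projection formula applied to the $\mathbb P^1$-fibration $f_{k+1}$, so no serious obstacle arises.
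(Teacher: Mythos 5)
Your proposal is correct and matches the paper's own argument: both peel off one simple reflection at a time, use the LES (whose tail is exact because $H^{2}(w,\alpha_{i_r})=0$ by \cite[Corollary 6.4]{Ka4}) to get a surjection at each step, and conclude by composing surjections via (descending) induction. The extra care you take in identifying the LES map with the pullback along the $\mathbb P^1$-fibration is sound and is exactly what makes the composition of these surjections equal to the natural homomorphism $f$.
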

\begin{proof}
If $w=w_{0}$, we are done. Otherwise, let $\underline {i}_1=(j_1, \ldots, j_r, j_{r+1})$.
Note that by \cite[Corollary 6.4]{Ka4}, we have $H^2(ws_{i_{r+1}}, \alpha_{r+1})=0$. 
Then by LES, the natural  homomorphism 
$$H^1(Z(ws_{i_{r+1}}, \underline i_1), T_{(ws_{i_{r+1}},\underline i_1)})\longrightarrow H^1(Z(w, \underline i), T_{(w,\underline i)})$$
is surjective. 
 By descending induction on $l(w)$, the natural homomorphism $H^1(Z(w_0, \underline j), T_{(w_0, \underline j)})\longrightarrow 
 H^1(Z(ws_{i_{r+1}}, \underline i_1), T_{(ws_{i_{r+1}}, \underline i_1)})$  of $B$-modules is surjecive.
 Hence the  natural  homomorphism 
 $$f: H^1(Z(w_0, \underline j), T_{(w_0, \underline j)})\longrightarrow H^1(Z(w, \underline i), T_{(w, \underline i)})$$ of $B$-modules is surjective.
\end{proof}

\begin{lemma}\label{lemma7.2}
 Let $J=S\setminus \{\alpha_n\}$.  Let $v\in W_J$ and $u\in W$ be such that $l(uv)=l(u)+l(v)$.
 Let $u=s_{i_1}\cdots s_{i_r}$ and $v=s_{i_{r+1}}\cdots s_{i_t}$ be reduced expressions of $u$ and $v$ respectively.
 Let $\underline i=(i_1, i_2, \ldots, i_{r})$ and $\underline j=(i_1, i_2, \ldots, i_{r}, i_{r+1}, \ldots, i_t)$. Then, we have 
 \begin{enumerate}
  \item 
 The natural homomorphism
 $$H^0(Z(uv, \underline j), T_{(uv, \underline j)})\to H^0(Z(u, \underline i), T_{(u, \underline i)})$$
of $B$-modules is surjecive.
\item 
The natural homomorphism $$H^1(Z(uv, \underline j), T_{(uv, \underline j)})\longrightarrow H^1(Z(u, \underline i), T_{(u, \underline i)})$$
of $B$-modules is an isomorphism.
 \end{enumerate}
 \end{lemma}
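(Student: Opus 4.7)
The plan is to prove both parts simultaneously by induction on $l(v)$, using the long exact sequence LES together with the vanishing result (Corollary \ref{short}) that $H^i(w,\alpha)=0$ for $i\geq 1$ whenever $\alpha$ is a short root with $-\alpha\notin S$.

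The base case $l(v)=0$ is trivial since then $uv=u$, $\underline j=\underline i$, and the natural map is the identity. For the inductive step, write $v=v's_{i_t}$ with $l(v)=l(v')+1$. Since $v\in W_J$, every index $i_k$ with $r<k\leq t$ lies in $J$, so $i_t\neq n$. Let $\tau=uv'$ and $\underline j'=(i_1,\ldots,i_{t-1})$; then $l(\tau)=l(u)+l(v')$ and $\underline j'$ is a reduced expression of $\tau$ extending $\underline i$. The point is that $\alpha_{i_t}$ is a short simple root in type $C_n$ (since all simple roots other than $\alpha_n$ are short), and $-\alpha_{i_t}\notin S$. Hence Corollary \ref{short} gives $H^1(uv,\alpha_{i_t})=0$.

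Applying LES to the reduced expression $\underline j=(\underline j',i_t)$ of $uv$, and using $H^1(uv,\alpha_{i_t})=0$, the sequence collapses to
\begin{equation*}
0\to H^0(uv,\alpha_{i_t})\to H^0(Z(uv,\underline j),T_{(uv,\underline j)})\to H^0(Z(\tau,\underline j'),T_{(\tau,\underline j')})\to 0
\end{equation*}
together with an isomorphism
\begin{equation*}
H^1(Z(uv,\underline j),T_{(uv,\underline j)})\xrightarrow{\ \sim\ } H^1(Z(\tau,\underline j'),T_{(\tau,\underline j')}).
\end{equation*}
Thus the natural map on $H^0$ from $Z(uv,\underline j)$ to $Z(\tau,\underline j')$ is surjective, and the natural map on $H^1$ is an isomorphism.

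Now $v'\in W_J$ with $l(uv')=l(u)+l(v')$ and $l(v')<l(v)$. By the induction hypothesis applied to $(u,v')$, the natural map $H^0(Z(\tau,\underline j'),T_{(\tau,\underline j')})\to H^0(Z(u,\underline i),T_{(u,\underline i)})$ is surjective and the corresponding map on $H^1$ is an isomorphism. Composing with the maps from the previous paragraph (and using naturality, so that these compositions agree with the natural maps in the statement), we obtain surjectivity in (1) and an isomorphism in (2). The only real input beyond routine bookkeeping of the long exact sequence is the vanishing $H^1(uv,\alpha_{i_t})=0$, and this is immediate from Corollary \ref{short} because $i_t\neq n$; this is where the hypothesis $v\in W_J$ is crucially used, and it is the step that one might initially worry about but that turns out to be painless.
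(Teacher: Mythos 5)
Your proof is correct and takes essentially the same route as the paper: both arguments feed the vanishing $H^1(uv',\alpha_{i_l})=0$ from Corollary \ref{short} (valid because every letter of $v$ is a short simple root, as $v\in W_J$) into LES and then induct on the length of $v$. The only cosmetic difference is that you run a single induction establishing (1) and (2) together, whereas the paper handles (1) by a stepwise recursion and (2) by a separate induction on $l(v)$.
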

\begin{proof}
Let $r+1\leq l \leq t$. Let $v_l=us_{i_{r+1}}\cdots s_{i_l}$
 and $\underline{i}_{l}=(\underline{i}, i_{r+1}, \ldots, i_{l})$.
 
Proof of (1):
 By Corollary \ref{short},  we have $H^{i}(v_t, \alpha_{i_{t}})=0$.  Therefore, using LES, 
  we see that the natural homomorphism $$H^0(Z(v_t, \underline i_{t}), T_{(v_{t}, \underline i_{t})})\to H^0(Z(v_{t-1}, \underline i_{t-1}), T_{(v_{t-1}, \underline i_{t-1})})$$ is surjective. By the recursion, the natural homomorphism  $$H^0(Z(v_{t-1}, \underline i_{t-1}), T_{(v_{t-1}, \underline i_{t-1})})\to H^0(Z(u, \underline i), T_{(u, \underline i)})$$ is surjective. Hence we conclude that the natural homomorphism 
 $$H^0(Z(uv, \underline j), T_{(uv, \underline j)})\to H^0(Z(u, \underline i), T_{(u, \underline i)})$$ is  surjective.

 Proof of (2): 
 Proof is by induction on $l(v)$.
  By LES, we have the following exact sequence of $B$-modules:
 
   $0\longrightarrow H^0(uv, \alpha_{i_t})\longrightarrow H^0(Z(uv, \underline j), T_{(uv,\underline j)})\longrightarrow 
   H^0(Z(v_{t-1}, \underline i_{t-1}), T_{(v_{t-1}, \underline i_{t-1})})\longrightarrow H^1(uv, \alpha_{i_t})\longrightarrow 
 H^1(Z(uv, \underline j), T_{(uv,\underline j)})\longrightarrow H^1(Z(v_{t-1}, \underline i_{t-1}), T_{(v_{t-1},\underline i_{t-1})})
 \longrightarrow 0.$

  By induction on $l(v)$, the natural homomorphism 
  $H^1(Z(v_{t-1}, \underline i_{t-1}), T_{(v_{t-1}, \underline i_{t-1})})\longrightarrow H^1(Z(u, \underline i), T_{(u, \underline i)})$ is an isomorphism.
 
 By Corollary \ref{short}, $H^1(uv, \alpha_{i_t})=0$. Therefore, by the above exact sequence, we see that 
 $H^1(Z(uv, \underline j), T_{(uv,\underline j)})\longrightarrow H^1(Z(v_{t-1}, \underline i_{t-1}), T_{(v_{t-1},\underline i_{t-1})})$ is an isomorphism.
Hence, we conclude that the homomorphism
$$H^1(Z(uv, \underline j), T_{(uv, \underline j)})\longrightarrow H^1(Z(u, \underline i), T_{(u, \underline i)})$$
of $B$-modules is an isomorphism.
 \end{proof}

Recall that by Lemma \ref{lemma: 1} and  Lemma \ref{coxeter} we have
$$w_0=(\prod_{l_1=1}^{k-1}[a_{l_1}, n])([a_k, n]^{n+1-k})(\prod_{l_2=1}^{k-1}[a_{k}, a_{l_2}-1])$$ 
is a reduced expression for $w_0$. Let $\underline i$ be the tuple corresponding to this reduced expression of $w_0$.
Let $u_1=w_k[a_k, n]$ and $\underline i_1$ be the tuple corresponding to the reduced expression 
$(\prod_{l_1=1}^{k}[a_{l_1}, n])([a_k, n])$.
Note that $a_k=1$. With this notation, we have

\begin{lemma}\label{7.3} \

\begin{enumerate}
 \item 
 The natural homomorphism $$H^0(Z(w_0, \underline{i}),T_{(w_0, \underline i)})\longrightarrow H^0(Z(u_1, \underline{i}_1),T_{(u_1, \underline i_1)})$$ of $B$-modules is an isomorphism. 
 \item The natural homomorphism $$H^1(Z(w_0, \underline{i}),T_{(w_0, \underline i)})\longrightarrow H^1(Z(u_1, \underline{i}_1),T_{(u_1, \underline i_1)})$$ of $B$-modules is an isomorphism. 
 
\end{enumerate}
 \end{lemma}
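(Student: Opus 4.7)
The strategy is iterative descent from the reduced expression of $w_0$ down to $u_1$, stripping a simple reflection from the right at each stage and invoking the long exact sequence $\mathrm{LES}$ of the tangent bundle. The six-term sequence
\[
0\to H^0(w,\alpha_{i_r})\to H^0(Z(w),T)\to H^0(Z(\tau),T)\to H^1(w,\alpha_{i_r})\to H^1(Z(w),T)\to H^1(Z(\tau),T)\to 0
\]
supplies both $H^0$ and $H^1$ isomorphisms at a given step whenever $H^0(w,\alpha_{i_r})=H^1(w,\alpha_{i_r})=0$.

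Decompose $w_0 = u_1\cdot v$ with $v = [a_k,n]^{n-1-k}\cdot v'$, where $v'=\prod_{l=1}^{k-1}[a_k,a_l-1]$. Since no factor of $v'$ involves $s_n$, we have $v'\in W_J$ for $J=S\setminus\{\alpha_n\}$. Applying Lemma \ref{lemma7.2} with $u=u_1\cdot[a_k,n]^{n-1-k}$ and $v'$ immediately yields the $H^1$ isomorphism and an $H^0$ surjection for this phase.

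Next, strip the block $[a_k,n]^{n-1-k}$ one simple reflection at a time, right-to-left within each copy of $c_0=s_1\cdots s_n$. For the peel $s_n$, Lemma \ref{lemma4.9} provides $H^j(w,\alpha_n)=0$ for all $j\ge 0$ in stages where the remaining tail still has three or more copies of $c_0$, and the finitely many remaining boundary stages are handled by reducing via Lemma \ref{lemma2.2} (using $\langle\alpha_n,\alpha_{n-1}^\vee\rangle=-2$) to cohomology of positive roots that vanishes in low degree. For a peel $s_i$ with $i\ne n$, $\alpha_i$ is a short simple root with $-\alpha_i\notin S$, so Corollary \ref{short} clears $H^{\ge 1}(w,\alpha_i)$.

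The main obstacle is to upgrade the $H^0$ surjection of Lemma \ref{lemma7.2}(1) to an isomorphism, and more generally to verify $H^0(w,\alpha_{i_r})=0$ at each intermediate short-root stage; Corollary \ref{short} and Lemma \ref{lemma4.9} together address only $H^{\ge 1}$ (for short roots) and the long-root generic case respectively. For the $H^0$ vanishing I would iterate Lemma \ref{lemma2.2}(1) to write $H^0(w,\alpha_{i_r})$ as cohomology of a cyclic $B$-module built out of $H^0(s_{i_r},\alpha_{i_r})=\mathfrak{sl}_{2,\alpha_{i_r}}$, and then use Lemma \ref{lemma2.3} together with the weight bookkeeping of Section 5 (in particular the structure established in Lemmas \ref{lambda}--\ref{H0}) to force vanishing at each intermediate word $w$ lying between $u_1$ and $w_0$. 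Once every such $H^0(w,\alpha_{i_r})$ and $H^1(w,\alpha_{i_r})$ is cleared, chaining the $\mathrm{LES}$ isomorphisms along the entire descent yields both assertions (1) and (2) of the lemma.
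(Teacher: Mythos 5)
Your treatment of part (2) matches the paper's: decompose $w_0=u_1\,[a_k,n]^{n-k-1}v'$ with $v'=\prod_{l=1}^{k-1}[a_k,a_l-1]\in W_{S\setminus\{\alpha_n\}}$, dispose of the $v'$-tail with Lemma \ref{lemma7.2}, and peel the remaining copies of $[a_k,n]$ using Lemma \ref{lemma4.9} for the $s_n$-steps and Corollary \ref{short} (equivalently, Lemma \ref{lemma7.2} again) for the short simple reflections. Note that every $s_n$-peel in this range occurs at $u_j$ with $j\geq 2$, i.e.\ with at least three trailing copies of $s_1\cdots s_n$, so Lemma \ref{lemma4.9} already covers all of them and the ``boundary stages'' you worry about do not arise.

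The genuine gap is in part (1). Your plan is to upgrade the $H^0$-surjection to an isomorphism by proving $H^0(w,\alpha_{i_r})=0$ at every intermediate step, since in LES this group is precisely the kernel of $H^0(Z(w,\underline i),T_{(w,\underline i)})\to H^0(Z(\tau,\underline i'),T_{(\tau,\underline i')})$. But these kernels are not all zero: $H^0$ of the relative tangent bundle along words of this shape is typically nonzero (compare Lemma \ref{V_m}(1) and Proposition \ref{H0alphan}, which exhibit many nonzero weights for closely related words), and the lemmas you invoke (Lemma \ref{lemma2.3}, Lemmas \ref{lambda}--\ref{H0}) concern $H^0(\cdot,\alpha_{n-1})$ for entirely different words, so they give no handle on, say, $H^0(u_{j-1}[a_k,m],\alpha_m)$. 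The paper sidesteps stepwise injectivity altogether: by \cite[Theorem 7.1]{CKP} the module $H^0(Z(w_0,\underline i),T_{(w_0,\underline i)})$ is a parabolic subalgebra of $\mathfrak g$ and hence contains a unique $B$-stable line, namely $\mathfrak g_{-\alpha_0}$; since $u_1^{-1}(\alpha_0)<0$, \cite[Lemma 6.2]{CKP} gives $H^0(Z(u_1,\underline i_1),T_{(u_1,\underline i_1)})_{-\alpha_0}\neq 0$, so the kernel of the (already surjective) composite, being a $B$-submodule of a parabolic subalgebra that cannot contain $\mathfrak g_{-\alpha_0}$, must be zero. This global $B$-module argument is the essential ingredient missing from your proposal, and without it (or a complete verification of all the intermediate $H^0$-vanishings, which I do not believe hold) part (1) is not established.
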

\begin{proof}
 Let $u_j=w_k[a_k, n]^j$ and $\underline i_j$ be the tuple corresponding to the reduced expression $(\prod_{l_1=1}^{k}[a_{l_1}, n])([a_k, n]^j)$ (see Lemma \ref{coxeter}).
 By Lemma \ref{lemma7.2}, the natural homomorphism
 $$H^0(Z(w_0, \underline{i}),T_{(w_0, \underline i)})\longrightarrow H^0(Z(u_{n-k}, \underline{i}_{n-k}),T_{(u_{n-k}, \underline i_{n-k})})$$ is  surjective
 and the natural homomorphism $$H^1(Z(w_0, \underline{i}),T_{(w_0, \underline i)})\longrightarrow H^1(Z(u_{n-k}, \underline{i}_{n-k}),T_{(u_{n-k}, \underline i_{n-k})})$$
is an isomorphism.

 If $j\geq 2$, then by Lemma \ref{lemma4.9}, we have $H^1(u_j, \alpha_n)=0$. 
 Hence by LES and Lemma \ref{lemma7.2}, for each $2\leq j\leq n-k$
 we observe that the natural homomorphism 
 $$H^0(Z(u_j, \underline{i}_{j}),T_{(u_{j}, \underline i_{j})})\longrightarrow H^0(Z(u_{j-1}, \underline{i}_{j-1}),T_{(u_{j-1}, \underline i_{j-1})})$$
is surjective and $$H^1(Z(u_j, \underline{i}_{j}),T_{(u_{j}, \underline i_{j})})\longrightarrow H^1(Z(u_{j-1}, \underline{i}_{j-1}),T_{(u_{j-1}, \underline i_{j-1})})$$
is an isomorphism.
Therefore, the homomorphism $$H^0(Z(w_0, \underline{i}),T_{(w_0, \underline i)})\longrightarrow H^0(Z(u_1, \underline{i}_1),T_{(u_1, \underline i_1)})$$ of $B$-modules is
surjecive and the homomorphism 
$$H^1(Z(w_0, \underline{i}),T_{(w_0, \underline i)})\longrightarrow H^1(Z(u_1, \underline{i}_1),T_{(u_1, \underline i_1)})$$ of $B$-modules is an isomorphism. 
  
  Further since $u_1^{-1}(\alpha_0)<0$, by \cite[Lemma 6.2]{CKP}, we have $H^0(Z(u_1, \underline{i}_1),T_{(u_1, \underline i_1)})_{-\alpha_0}\neq 0$.
  By \cite[Theorem 7.1]{CKP}, $H^0(Z(w_0, \underline{i}),T_{(w_0, \underline i)})$ is a parabolic Lie subalgebra of $\mathfrak{g}$
  and hence there is a unique $B$-stable line in $H^0(Z(w_0, \underline{i}),T_{(w_0, \underline i)})$, namely $\mathfrak{g}_{-\alpha_0}$.
  Therefore we conclude that the natural homomorphism 
  $$H^0(Z(w_0, \underline{i}),T_{(w_0, \underline i)})\longrightarrow H^0(Z(u_1, \underline{i}_1),T_{(u_1, \underline i_1)})$$ of $B$-modules is
  an isomorphism.  
  \end{proof}

The following is a useful corollary.

\begin{corollary} \label{A}
   If $\mu\in X(T)\setminus \{0\}$, then $dim(H^0(Z(u_1, \underline{i}_1),T_{(u_1, \underline i_1)})_{\mu})\leq 1$. 
\end{corollary}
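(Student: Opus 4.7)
The plan is short because the real work has already been done in Lemma \ref{7.3} and in \cite[Theorem 7.1]{CKP}; the corollary is essentially a weight-space observation.

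First I would appeal to Lemma \ref{7.3}(1), which gives a $B$-module isomorphism
\[
H^0(Z(w_0, \underline{i}),T_{(w_0, \underline i)}) \xrightarrow{\sim} H^0(Z(u_1, \underline{i}_1),T_{(u_1, \underline i_1)}).
\]
Since this is an isomorphism of $B$-modules it intertwines the $T$-weight decompositions on the two sides, so it suffices to bound the dimension of each non-zero $T$-weight space on the left.

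Next I would invoke \cite[Theorem 7.1]{CKP}, which identifies $H^0(Z(w_0, \underline i), T_{(w_0, \underline i)})$ with a parabolic Lie subalgebra $\mathfrak p$ of $\mathfrak g$ containing the Borel subalgebra $\mathfrak b$. Any such parabolic admits the standard root-space decomposition
\[
\mathfrak p = \mathfrak h \;\oplus\; \bigoplus_{\alpha \in \Phi(\mathfrak p)} \mathfrak g_\alpha,
\]
with $\Phi(\mathfrak p) \subseteq R$ and each root space $\mathfrak g_\alpha$ one-dimensional. Hence for every $\mu \in X(T) \setminus \{0\}$ the $\mu$-weight space of $\mathfrak p$ is either zero or equal to some $\mathfrak g_\alpha$, and in particular has dimension at most one.

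Transporting this bound through the isomorphism of Lemma \ref{7.3}(1) gives the desired inequality. There is no genuine obstacle: the only nontrivial inputs — the isomorphism with the $w_0$-case and the parabolic-subalgebra identification — are already available, and together they reduce the claim to the elementary fact that non-zero weight spaces of a parabolic subalgebra of a semisimple Lie algebra are one-dimensional.
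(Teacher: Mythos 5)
Your proof is correct and follows exactly the paper's own argument: Lemma \ref{7.3}(1) transports the question to $H^0(Z(w_0, \underline{i}),T_{(w_0, \underline i)})$, which by \cite[Theorem 7.1]{CKP} is a parabolic subalgebra of $\mathfrak g$, whose nonzero weight spaces are root spaces and hence one-dimensional. The extra detail you give on the root-space decomposition is a harmless elaboration of the same reasoning.
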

\begin{proof}
 By \cite[Theorem 7.1]{CKP}, $H^0(Z(w_0, \underline{i}),T_{(w_0, \underline i)})$ is a parabolic Lie subalgebra of $\mathfrak{g}$.
 By Lemma \ref{7.3}(1), we have 
 $$H^0(Z(w_0, \underline{i}),T_{(w_0, \underline i)})\simeq H^0(Z(u_1, \underline{i}_1),T_{(u_1, \underline i_1)}) ~~\mbox{(as $B$-modules)}. $$
  Hence for any $\mu\in X(T)\setminus\{0\}$, 
 we have $$dim(H^0(Z(u_1, \underline{i}_1),T_{(u_1, \underline i_1)})_{\mu})\leq 1.\qedhere$$
 \end{proof}

Let  $u'_1=w_k[a_k, n-1]$ and let $\underline{i'}_1$ be the tuple corresponding to the reduced expression $(\prod_{l_1=1}^{k}[a_{l_1}, n])[a_k, n-1]$. 
Then, we have  

\begin{lemma}\label{7.4}
Let  $\mu\in X(T)\setminus \{0\}$.
\begin{enumerate} 
 \item 
If $H^1(u_1, \alpha_n)_{\mu}=0$, then 
$dim(H^0(Z(u'_1,\underline{i'}_1), T_{(u'_1, \underline{i'}_1)})_{\mu})\leq 1$.
\item If $H^1(u_1, \alpha_n)_{\mu}\neq 0$, then $dim(H^0(Z(u'_1,\underline{i'}_1), T_{(u'_1, \underline{i'}_1)})_{\mu})=2$.
\end{enumerate}
 \end{lemma}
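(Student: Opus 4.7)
The approach is to exploit the $\mathbb{P}^1$-fibration $f_n\colon Z(u_1,\underline{i}_1)\to Z(u'_1,\underline{i'}_1)$ (since $u_1=u'_1 s_n$) via the LES recalled at the start of Section 7:
\begin{align*}
0 \to H^0(u_1,\alpha_n) &\to H^0(Z(u_1,\underline{i}_1),T_{(u_1,\underline{i}_1)}) \to H^0(Z(u'_1,\underline{i'}_1),T_{(u'_1,\underline{i'}_1)}) \\
&\to H^1(u_1,\alpha_n) \to H^1(Z(u_1,\underline{i}_1),T_{(u_1,\underline{i}_1)}) \to \cdots .
\end{align*}
Passing to $\mu$-weight spaces and applying Corollary \ref{A} to bound $\dim H^0(Z(u_1,\underline{i}_1),T_{(u_1,\underline{i}_1)})_\mu\le 1$ settles part (1) immediately: vanishing of $H^1(u_1,\alpha_n)_\mu$ turns the next arrow into a surjection, so $H^0(Z(u'_1,\underline{i'}_1),T_{(u'_1,\underline{i'}_1)})_\mu$ is a quotient of a space of dimension at most $1$.

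For (2), Lemma \ref{w_k} identifies $H^1(u_1,\alpha_n)=H^0(u'_1,\alpha_{n-1})$, and Lemma \ref{H0u_1} shows the $\mu$-weight space on the right is one-dimensional. The LES then yields the upper bound
\[
\dim H^0(Z(u'_1,\underline{i'}_1),T_{(u'_1,\underline{i'}_1)})_\mu \;\le\; \dim H^0(Z(u_1,\underline{i}_1),T_{(u_1,\underline{i}_1)})_\mu + \dim H^1(u_1,\alpha_n)_\mu \;\le\; 2.
\]
For the matching lower bound, I would apply the LES a second time, to the $\mathbb{P}^1$-fibration $Z(u'_1,\underline{i'}_1)\to Z(u'_1 s_{n-1},\underline{i''})$ obtained by dropping the final $P_{\alpha_{n-1}}$; this produces an inclusion $H^0(u'_1,\alpha_{n-1})_\mu \hookrightarrow H^0(Z(u'_1,\underline{i'}_1),T_{(u'_1,\underline{i'}_1)})_\mu$, contributing one dimension via the identification from Lemma \ref{w_k}. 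A second, linearly independent dimension must come from the image of $H^0(Z(u_1,\underline{i}_1),T_{(u_1,\underline{i}_1)})_\mu$ in $H^0(Z(u'_1,\underline{i'}_1),T_{(u'_1,\underline{i'}_1)})_\mu$, provided one checks (i) $H^0(u_1,\alpha_n)_\mu=0$ and (ii) $\dim H^0(Z(u_1,\underline{i}_1),T_{(u_1,\underline{i}_1)})_\mu=1$. For (i), a weight-space analysis of $H^0(u_1,\alpha_n)$ paralleling Proposition \ref{H0alphan} (decomposing via SES through the final $[a_k,n]$ block) should rule out the $\mu$'s flagged by Lemma \ref{H0u_1}; for (ii), Lemma \ref{7.3} together with \cite[Theorem 7.1]{CKP} identifies $H^0(Z(u_1,\underline{i}_1),T_{(u_1,\underline{i}_1)})_\mu$ with the $\mu$-weight space of the parabolic Lie subalgebra $H^0(Z(w_0,\underline i),T_{(w_0,\underline i)})$, and one checks that the short negative root $\mu$ (computed from Lemma \ref{H0u_1} as $-(e_{t_1}+e_{t_2})$ in the standard realization of $C_n$) indeed occurs in this parabolic with multiplicity one.

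The decisive and most delicate point is to verify that the two contributions are linearly independent in $H^0(Z(u'_1,\underline{i'}_1),T_{(u'_1,\underline{i'}_1)})_\mu$. Equivalently, one must show that the connecting homomorphism $\partial\colon H^0(Z(u'_1,\underline{i'}_1),T_{(u'_1,\underline{i'}_1)})_\mu \to H^1(u_1,\alpha_n)_\mu$ is nonzero on the embedded copy of $H^0(u'_1,\alpha_{n-1})_\mu$, so that this copy is not contained in $\ker\partial = \mathrm{image}(H^0(Z(u_1,\underline{i}_1),T_{(u_1,\underline{i}_1)})_\mu)$. I expect this to follow from a naturality argument comparing the snake-lemma descriptions of $\partial$ for the two $\mathbb{P}^1$-fibrations, together with Lemma \ref{w_k}, which encodes precisely the interchange of $\alpha_n$ and $\alpha_{n-1}$ at the cohomological level. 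Once these three ingredients are in place, the LES gives $\dim = 1 - 0 + 1 = 2$, matching the upper bound.
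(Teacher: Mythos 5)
Your part (1) and the upper bound in part (2) coincide with the paper's argument: the LES for the fibration $Z(u_1,\underline{i}_1)\to Z(u'_1,\underline{i'}_1)$, Corollary \ref{A}, and the one-dimensionality of $H^1(u_1,\alpha_n)_\mu$ give everything up to $\dim\le 2$. The gap is exactly at the point you yourself flag as ``decisive and most delicate'': you never prove that the embedded copy of $H^0(u'_1,\alpha_{n-1})_\mu$ in $H^0(Z(u'_1,\underline{i'}_1),T_{(u'_1,\underline{i'}_1)})_\mu$ meets the image of $H^0(Z(u_1,\underline{i}_1),T_{(u_1,\underline{i}_1)})_\mu$ trivially; you only ``expect'' this from a naturality comparison of the two connecting maps. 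Lemma \ref{w_k} is an abstract isomorphism of $B$-modules $H^1(u_1,\alpha_n)\cong H^0(u'_1,\alpha_{n-1})$; it says nothing about whether the composite $H^0(u'_1,\alpha_{n-1})\hookrightarrow H^0(Z(u'_1,\underline{i'}_1),T_{(u'_1,\underline{i'}_1)})\xrightarrow{\partial} H^1(u_1,\alpha_n)$ is nonzero, and the two fibrations you want to compare have different bases, so there is no ready-made commutative square to feed into a snake-lemma argument. As written, the lower bound $\ge 2$ is not established.

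The paper closes this step by a different, global mechanism that never computes $\partial$: since $H^0(u_1,\alpha_n)=0$ (your check (i), which holds outright because $u_1$ ends in $s_n$, cf.\ Proposition \ref{H1alphan}(2)), both $H^0(Z(u_1,\underline{i}_1),T_{(u_1,\underline i_1)})$ and $H^0(u'_1,\alpha_{n-1})$ sit as $B$-submodules inside $H^0(Z(u'_1,\underline{i'}_1),T_{(u'_1,\underline{i'}_1)})$, so their intersection is a $B$-submodule of $H^0(Z(u_1,\underline{i}_1),T_{(u_1,\underline i_1)})\cong H^0(Z(w_0,\underline{i}),T_{(w_0,\underline i)})$, which by \cite[Theorem 7.1]{CKP} is a parabolic Lie subalgebra and therefore has the single $B$-stable line $\mathfrak{g}_{-\alpha_0}$ as its socle. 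Since $-\alpha_0$ is long while every nonzero weight of $H^0(u'_1,\alpha_{n-1})$ is a short root, the intersection cannot contain that line and is hence zero; linear independence follows for all $\mu$ simultaneously. To repair your proof, replace the naturality heuristic by this socle argument (or some equally concrete proof that the intersection vanishes). Your remaining check (ii) is fine: for the relevant negative short roots $\mu$ the parabolic contains every negative root space, so $\dim H^0(Z(u_1,\underline{i}_1),T_{(u_1,\underline i_1)})_\mu=1$.
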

\begin{proof}
 By LES, we have the following long exact sequence of $B$-modules:

 $0\longrightarrow H^0(u_1, \alpha_n)\longrightarrow H^0(Z(u_1, \underline{i}_1),T_{(u_1, \underline i_1)})\longrightarrow 
 H^0(Z(u'_1,\underline{i'}_1), T_{(u'_1, \underline{i'}_1)})\longrightarrow H^1(u_1, \alpha_n)\longrightarrow \cdots . \hspace{1cm}(7.5.1)$
 
 Proof of (1): If $H^1(u_1, \alpha_n)_{\mu}=0$ and $\mu\in X(T)\setminus \{0\}$, then by the above exact sequence
 the natural 
 homomorphism (of $T$-modules)
 $H^0(Z(u_1, \underline{i}_1),T_{(u_1, \underline i_1)})_{\mu}\longrightarrow H^0(Z(u'_1,\underline{i'}_1), T_{(u'_1, \underline{i'}_1)})_{\mu}$
is surjective.
 By Corollary \ref{A}, we have $dim(H^0(Z(u_1, \underline{i}_1),T_{(u_1, \underline i_1)})_{\mu})\leq 1$.
 Hence we conclude that $dim(H^0(Z(u'_1,\underline{i'}_1), T_{(u'_1, \underline{i'}_1)})_{\mu})\leq 1$.
 
 Proof of (2): If $H^1(u_1, \alpha_n)_{\mu}\neq 0$, then by Proposition \ref{H1alphan},  we have  $\mu\in R^-_s\setminus (-S)$. 
 Also note that  $dim(H^1(u_1, \alpha_n)_{\mu})=1$.  Hence by the above arguments, we see that if $H^1(u_1, \alpha_n)_{\mu}\neq 0$, 
 then $$dim(H^0(Z(u'_1,\underline{i'}_1), T_{(u'_1, \underline{i'}_1)})_{\mu})\leq 2. \hspace{1cm} (7.5.2)$$ Therefore, we have the following observations;
 
 \begin{itemize}
  \item Note that $H^0(u_1, \alpha_n)=0$.
  Hence by (7.5.1), we see that $H^0(Z(u_1, \underline{i}_1),T_{(u_1, \underline i_1)})$ is a $B$-submodule of $H^0(Z(u'_1,\underline{i'}_1), T_{(u'_1, \underline{i'}_1)}).$
\item $H^0(u_1', \alpha_{n-1})$ is a $B$-submodule of $H^0(Z(u'_1,\underline{i'}_1), T_{(u'_1, \underline{i'}_1)})$.
 \item Since $-\alpha_0$ is a long root, we note that $H^0(u_1', \alpha_{n-1})_{-\alpha_0}=0$.
 \end{itemize}

 By Lemma \ref{7.3},   we have $H^0(Z(w_0, \underline{i}),T_{(w_0, \underline i)})=H^0(Z(u_1, \underline{i}_1),T_{(u_1, \underline i_1)})$. 
 By \cite[Theorem 7.1]{CKP}, $H^0(Z(w_0, \underline{i}),T_{(w_0, \underline i)})$ is a parabolic Lie subalgebra of $\mathfrak{g}$  and hence it has a
 unique $B$-stable one dimensional subspace, namely $\mathfrak{g}_{-\alpha_0}$. 
  By above discussion, the $B$-submodule $H^0(u_1', \alpha_{n-1})\cap H^0(Z(u_1, \underline{i}_1),T_{(u_1, \underline i_1)})$ of $H^0(Z(w_0, \underline{i}),T_{(w_0, \underline i)})$
 does not contain $\mathfrak{g}_{-\alpha_0}$ and so it is zero.
 
 By Lemma \ref{w_k}, we have $H^1(u_1, \alpha_n)=H^0(u_1', \alpha_{n-1})$.  Therefore,  if $H^1(u_1, \alpha_n)_{\mu}\neq 0$, then we have
 $$dim(H^0(Z(u'_1,\underline{i'}_1), T_{(u'_1, \underline{i'}_1)})_{\mu})\geq 2 .\hspace{1cm} (7.5.3)$$

 Hence from (7.5.2) and (7.5.3), we conclude that $$dim(H^0(Z(u'_1,\underline{i'}_1), T_{(u'_1, \underline{i'}_1)})_{\mu})= 2 .$$ 
 This completes the proof of (2).
 \end{proof}
 \begin{corollary}\label{B}
  The natural homomorphism
  $$H^0(Z(u'_1,\underline{i'}_1), T_{(u'_1, \underline{i'}_1)})\longrightarrow H^1(u_1, \alpha_n)$$ is surjective.
 \end{corollary}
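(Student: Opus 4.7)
The plan is to combine the long exact sequence LES with the weight-space dimension counts already established in Lemma \ref{7.4} and Corollary \ref{A}. First I would extract the relevant portion of LES, observing (as noted in the proof of Lemma \ref{7.4}) that $H^0(u_1,\alpha_n)=0$:
\[
0 \longrightarrow H^0(Z(u_1,\underline i_1),T_{(u_1,\underline i_1)}) \longrightarrow H^0(Z(u'_1,\underline{i'}_1),T_{(u'_1,\underline{i'}_1)}) \xrightarrow{\phi} H^1(u_1,\alpha_n) \longrightarrow \cdots.
\]
To prove $\phi$ is surjective I would check surjectivity on each $T$-weight space separately.

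For weights $\mu$ with $H^1(u_1,\alpha_n)_\mu=0$ there is nothing to prove, so I would focus on $\mu$ with $H^1(u_1,\alpha_n)_\mu \neq 0$. By Proposition \ref{H1alphan}(4) every such $\mu$ lies in $R_s^-\setminus(-S)$; in particular $\mu\neq 0$. Identifying $H^1(u_1,\alpha_n)$ with $H^0(u'_1,\alpha_{n-1})$ via Lemma \ref{w_k} and invoking Lemma \ref{H0u_1} yields $\dim H^1(u_1,\alpha_n)_\mu = 1$. Lemma \ref{7.4}(2) gives $\dim H^0(Z(u'_1,\underline{i'}_1),T_{(u'_1,\underline{i'}_1)})_\mu = 2$, while Corollary \ref{A} gives $\dim H^0(Z(u_1,\underline i_1),T_{(u_1,\underline i_1)})_\mu \leq 1$. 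Exactness then forces the image of $\phi_\mu$ to have dimension at least $2-1=1$, so $\phi_\mu$ surjects onto the one-dimensional space $H^1(u_1,\alpha_n)_\mu$.

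Since all of the substantive work has already been done in Lemma \ref{7.4} and Corollary \ref{A}, I do not expect a serious obstacle here; the corollary is a clean weight-by-weight consequence of the dimension bookkeeping already in place. The only delicate point is confirming that every nonzero weight of $H^1(u_1,\alpha_n)$ is one-dimensional, which is exactly the content of Lemma \ref{w_k} combined with Lemma \ref{H0u_1}.
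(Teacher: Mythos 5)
Your argument is correct and is essentially the paper's own proof: both rest on the exact sequence (7.5.1) together with the dimension counts $\dim H^0(Z(u_1,\underline i_1),T_{(u_1,\underline i_1)})_\mu\leq 1$ (Corollary \ref{A}), $\dim H^0(Z(u'_1,\underline{i'}_1),T_{(u'_1,\underline{i'}_1)})_\mu=2$ (Lemma \ref{7.4}(2)), and $\dim H^1(u_1,\alpha_n)_\mu=1$. Your explicit appeal to Lemma \ref{w_k} and Lemma \ref{H0u_1} for the one-dimensionality of the weight spaces of $H^1(u_1,\alpha_n)$ merely makes visible a fact the paper records inside the proof of Lemma \ref{7.4}(2).
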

 \begin{proof}
 Note that by Proposition \ref{H1alphan}, if $H^1(u_1, \alpha_n)_{\mu}\neq 0$ then $\mu\in R^-_s\setminus (-S)$. 
By Corollary \ref{A}, we have $dim(H^0(Z(u_1, \underline{i}_1),T_{(u_1, \underline i_1)})_{\mu})\leq 1$.
By Lemma \ref{7.4}, if $H^1(u_1, \alpha_n)_{\mu}\neq 0$, then $$dim(H^0(Z(u'_1,\underline{i'}_1), T_{(u'_1, \underline{i'}_1)})_{\mu})= 2.$$
Hence by the exact sequence (7.5.1) we conclude that the natural homomorphism 
  $$H^0(Z(u'_1,\underline{i'}_1), T_{(u'_1, \underline{i'}_1)})_{\mu}\longrightarrow H^1(u_1, \alpha_n)_{\mu}$$ is surjective.
  \end{proof}

Let $1\leq r\leq k$.
Let $\underline j_{r}=(i_{a_1}, \ldots, n, i_{a_2}, \ldots, n,\ldots, i_{a_r}, \ldots, n)$ and   $\underline j'_{r}=(i_{a_1}, \ldots, n, i_{a_2}, \ldots, n,\ldots, i_{a_{r-1}}, \ldots, n, i_{a_r}, \ldots, n-1)$.

We now prove 
\begin{lemma} \label{7.5}\

\begin{enumerate} 
 \item If $H^1(w_m, \alpha_n)_{\mu}=0$ for all $r\leq m\leq k$ and $H^1(u_1, \alpha_n)_{\mu}= 0$, then 
 $dim(H^0(Z(\tau_{r}, \underline j_{r}'), T_{(\tau_{r}, \underline j_{r}')})_{\mu})\leq 1$.
 \item
 If $H^1(w_r, \alpha_n)_{\mu}\neq 0$, then 
 $dim(H^0(Z(\tau_{r}, \underline j_r'), T_{(\tau_{r}, \underline j_r')})_{\mu})=2$ and hence the natural homomorphism $$H^0(Z(\tau_{r}, \underline j_r'), T_{(\tau_{r}, \underline j_r')})_{\mu}\longrightarrow H^1(w_r, \alpha_n)_{\mu}$$ is surjective. 

\end{enumerate}

\end{lemma}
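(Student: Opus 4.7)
The plan is to prove Lemma \ref{7.5} by descending induction on $r$, from $r=k$ down to $r=1$. The principal tools are the long exact sequence (LES) of Section 7 applied to the factorization $w_r=\tau_r s_n$, Lemma \ref{lemma7.2} applied to factorizations $uv$ with $v\in W_J$ for $J=S\setminus\{\alpha_n\}$, the explicit weight descriptions of $H^0(w_r,\alpha_n)$ and $H^1(w_r,\alpha_n)$ from Propositions \ref{H0alphan} and \ref{H^1}, and Corollary \ref{6.10} on the pairwise disjointness of $M_0,M_1,\dots,M_k$.

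For the upper bounds I would apply Lemma \ref{lemma7.2}(1) to $\tau_{r+1}=w_r\cdot[a_{r+1},n-1]$ (noting $[a_{r+1},n-1]\in W_J$) to obtain a surjection $H^0(Z(\tau_{r+1},\underline j_{r+1}'))\twoheadrightarrow H^0(Z(w_r,\underline j_r))$, so that $\dim H^0(Z(w_r))_\mu\le\dim H^0(Z(\tau_{r+1}))_\mu\le 1$ by the inductive hypothesis. In case (2), where $\mu\in M_r$, Corollary \ref{6.10} forces $\mu\notin M_{r+1}\cup\cdots\cup M_k\cup M_0$, so the hypothesis of (1) still applies at $r+1$. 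The LES for $w_r=\tau_r s_n$ then gives
\[
\dim H^0(Z(\tau_r))_\mu\le\dim H^0(Z(w_r))_\mu+\dim H^1(w_r,\alpha_n)_\mu-\dim H^0(w_r,\alpha_n)_\mu,
\]
and since the weight supports of $H^0(w_r,\alpha_n)$ and $H^1(w_r,\alpha_n)$ from Propositions \ref{H0alphan} and \ref{H^1} are disjoint, $H^0(w_r,\alpha_n)_\mu=0$ for $\mu\in M_r$. This yields $\le 1$ in (1) and $\le 2$ in (2). The base case $r=k$ is handled in parallel, with $u_1'=w_k\cdot[a_k,n-1]$ replacing $\tau_{k+1}$ and Lemma \ref{7.4} replacing the inductive hypothesis.

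For the lower bound $\dim H^0(Z(\tau_r))_\mu\ge 2$ in (2), I would apply Lemma \ref{lemma7.2}(1) to the factorization $\tau_r=w_{r-1}\cdot[a_r,n-1]$ to obtain a surjection $\pi_*\colon H^0(Z(\tau_r))\twoheadrightarrow H^0(Z(w_{r-1}))$. Since $\pi_*$ is the composition of the one-step projections obtained by stripping simple reflections from the end of $\tau_r$, starting with $s_{n-1}$, and the kernel at the first step is $H^0(\tau_r,\alpha_{n-1})$ by the LES, we have $H^0(\tau_r,\alpha_{n-1})\subset\ker\pi_*$. The resulting short exact sequence of weight spaces
\[
0\longrightarrow(\ker\pi_*)_\mu\longrightarrow H^0(Z(\tau_r))_\mu\longrightarrow H^0(Z(w_{r-1}))_\mu\longrightarrow 0
\]
then yields
\[
\dim H^0(Z(\tau_r))_\mu\ge\dim H^0(\tau_r,\alpha_{n-1})_\mu+\dim H^0(w_{r-1},\alpha_n)_\mu,
\]
using $H^0(w_{r-1},\alpha_n)\hookrightarrow H^0(Z(w_{r-1}))$ from the LES for $w_{r-1}$. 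Both summands are one-dimensional at $\mu\in M_r$: the first by Lemma \ref{Weights} and Corollary \ref{6.6}, the second by Corollary \ref{H1toH0} combined with Proposition \ref{H0alphan}. Hence $\dim H^0(Z(\tau_r))_\mu=2$. The surjectivity of the connecting map $H^0(Z(\tau_r))_\mu\to H^1(w_r,\alpha_n)_\mu$ then follows by a dimension count in the LES for $w_r=\tau_r s_n$, using $\dim H^0(Z(w_r))_\mu\le 1$ and $\dim H^1(w_r,\alpha_n)_\mu=1$.

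The main obstacle is rigorously verifying the inclusion $H^0(\tau_r,\alpha_{n-1})\subset\ker\pi_*$ via the iterated LES along the tower $Z(\tau_r)\to Z(w_{r-1}[a_r,n-2])\to\cdots\to Z(w_{r-1})$, and ensuring compatibility of kernels at each step with the overall projection $\pi_*$; once this is in place, the two independent contributions to $H^0(Z(\tau_r))_\mu$ combine cleanly with the LES for $w_r=\tau_r s_n$ to force the exact dimension $2$ and surjectivity claimed in (2).
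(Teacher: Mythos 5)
Your proposal is correct and follows essentially the same route as the paper: the upper bounds come from the chain of weight-space surjections built out of Lemma \ref{7.4}, Lemma \ref{lemma7.2}(1) and the LES (with Corollary \ref{6.10} guaranteeing the vanishing hypotheses needed in case (2)), and the lower bound $\geq 2$ comes from the two independent one-dimensional contributions $H^0(\tau_r,\alpha_{n-1})_{\mu}$ (inside the kernel of the projection to $H^0(Z(w_{r-1},\underline j_{r-1}),T_{(w_{r-1},\underline j_{r-1})})$, via Corollary \ref{6.6}) and $H^0(w_{r-1},\alpha_n)_{\mu}$ (via Corollary \ref{H1toH0}), exactly as in the paper's proof. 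The inclusion $H^0(\tau_r,\alpha_{n-1})\subset\ker\pi_*$ that you flag as the main obstacle is immediate, since $\pi_*$ factors through the one-step map $H^0(Z(\tau_r,\underline j_r'),T_{(\tau_r,\underline j_r')})\to H^0(Z(\tau_rs_{n-1},\underline j_r''),T_{(\tau_rs_{n-1},\underline j_r'')})$ whose kernel is $H^0(\tau_r,\alpha_{n-1})$ by the LES; this is precisely how the paper argues.
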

\begin{proof}
Proof of (1):
If $H^1(u_1, \alpha_n)_{\mu}= 0$. 
Then by Lemma \ref{7.4}, we have $dim(H^0(Z(u'_1,\underline{i'}_1), T_{(u'_1, \underline{i'}_1)})_{\mu})\leq 1$.
By Lemma \ref{lemma7.2},  the natural homomorphism 
$$H^0(Z(u'_1,\underline{i'}_1), T_{(u'_1, \underline{i'}_1)})_{\mu}\longrightarrow H^0(Z(w_k,\underline{j}_k), T_{(w_k, \underline{j}_k)})_{\mu}$$
is surjective. 
 If  $H^1(w_m, \alpha_n)_{\mu}=0$ for all $r\leq m\leq k$, by using LES, we see that the natural homomorphism 
$$H^0(Z(w_k,\underline{j}_k), T_{(w_k, \underline{j}_k)})_{\mu}\longrightarrow H^0(Z(\tau_{r}, \underline j_{r}'), 
T_{(\tau_{r}, \underline j_{r}')})_{\mu}$$ is surjective.
Therefore, we have  $dim(H^0(Z(\tau_{r}, \underline j_{r}'), T_{(\tau_{r}, \underline j_{r}')})_{\mu})\leq 1$.

 Proof of (2):
 If $H^1(w_r, \alpha_n)_{\mu}\neq 0$, then by Corollary \ref{6.10}, we have $H^1(w_m, \alpha_n)_{\mu}=0$ for all $r+1\leq m\leq k$ and $H^1(u_1, \alpha_n)_{\mu}= 0$. 
Then by (1) , we have $$dim(H^0(Z(\tau_{r+1}, \underline j_{r+1}'), T_{(\tau_{r+1}, \underline j_{r+1}')})_{\mu})\leq 1.$$
 
  By Lemma \ref{lemma7.2},  the natural homomorphism 
$$H^0(Z(\tau_{r+1}, \underline j_{r+1}'), T_{(\tau_{r+1}, \underline j_{r+1}')})_{\mu}\longrightarrow H^0(Z(w_r,\underline{j}_r), 
T_{(w_r, \underline{j}_r)})_{\mu}$$ is surjecive.
Hence $dim(H^0(Z(w_r,\underline{j}_r), T_{(w_r, \underline{j}_r)})_{\mu})\leq 1.$

By  LES we have the following long exact sequence of $B$-modules:

$0\longrightarrow H^0(w_r, \alpha_n)\longrightarrow H^0(Z(w_r,\underline{j}_r), T_{(w_r, \underline{j}_r)})\longrightarrow
H^0(Z(\tau_{r}, \underline j_r'), T_{(\tau_{r}, \underline j_r')})\longrightarrow H^1(w_r, \alpha_n)\longrightarrow \cdots .$

Since $dim(H^0(Z(w_r,\underline{j}_r), T_{(w_r, \underline{j}_r)})_{\mu})\leq 1$ and $dim(H^1(w_r, \alpha_n)_{\mu})=1$, we see that 
$dim(H^0(Z(\tau_{r}, \underline j_r'), T_{(\tau_{r}, \underline j_r')})_{\mu})\leq 2\hspace{1cm}(7.7.1)$.

Let $\underline j_r''$ be the tuple corresponding to the reduced expression $(\prod_{l_1=1}^{k-1}[a_{l_1}, n])[a_k, n-2]$ of $\tau_{r}s_{n-1}$.
 By LES we have the following exact sequence of $B$-modules:

$0\longrightarrow H^0(\tau_r, \alpha_{n-1})\longrightarrow H^0(Z(\tau_{r}, \underline j_r'), T_{(\tau_{r}, \underline j_r')}) \longrightarrow
H^0(Z(\tau_{r}s_{n-1}, \underline j_r''), T_{(\tau_{r}s_{n-1}, \underline j_r'')})\longrightarrow 0.\hspace{1cm}(7.7.2)$
 
 By Lemma \ref{lemma7.2}, the natural homomorphism
 $H^0(Z(\tau_{r}s_{n-1}, \underline j_r''), T_{(\tau_{r}s_{n-1}, \underline j_r'')}) \longrightarrow H^0(Z(w_{r-1}, \underline j_{r-1}), T_{(w_{r-1}, \underline j_{r-1})})$ is surjective.

 Observe that, by LES we have 
 $H^0(w_{r-1}, \alpha_n)$ is a $B$-submodule of $H^0(Z(w_{r-1}, \underline j_{r-1}), T_{(w_{r-1}, \underline j_{r-1})})$.
 If $H^1(w_r, \alpha_n)_{\mu}\neq 0$ then Corollary \ref{6.6}, we have
 $H^0(\tau_r, \alpha_{n-1})_{\mu}\neq 0$ and by Corollary \ref{H1toH0}, we have  $H^0(w_{r-1}, \alpha_n)_{\mu}\neq 0$. Therefore, we have
 $dim(H^0(Z(\tau_{r}s_{n-1}, \underline j_r''), T_{(\tau_{r}s_{n-1}, \underline j_r'')})_{\mu})\geq dim(H^0(Z(w_{r-1}, \underline j_{r-1}), T_{(w_{r-1}, \underline j_{r-1})})_{\mu})\geq 1.$
    Hence by above exact sequence (7.7.1), we observe that 
$$dim(H^0(Z(\tau_{r}, \underline j_r'), T_{(\tau_{r}, \underline j_r')})_{\mu})\geq 2\hspace{1cm}(7.7.3).$$
 
 Therefore, from $(7.7.1)$ and $(7.7.3)$ we conclude that  $dim(H^0(Z(\tau_{r}, \underline j_r'), T_{(\tau_{r}, \underline j_r')})_{\mu})=2$.
This completes the proof of (2).
\end{proof}

\section{Main Theorem}
In this section we prove the main theorem.

\noindent Recall that $G=PSp(2n, \mathbb C)$ $(n\geq 3)$, and let  $c$ be a Coxeter element in $W$. Also, recall that $c$ has a reduced expression  $c=[a_1, n][a_2, a_{1}-1]\cdots [a_k, a_{k-1}-1]$, 
where $[i, j]$ for $i\leq j$ denotes $s_is_{i+1}\cdots s_j$
and $n\geq a_1>a_2>\ldots >a_k=1$.

\noindent Let $\underline i=(\underline i^1, \underline i^2,\ldots, \underline i^n)$ be a sequence corresponding 
to a reduced expression of $w_0$, where
$\underline i^r$ $(1\leq r\leq n)$ is a sequence of reduced expressions of $c$ (see Lemma \ref{coxeter}). 
  Then, we have 
  
\begin{theorem}\label{Theorem} 
  
$H^j(Z(w_0, \underline i), T_{(w_0, \underline i)})=0$ for all $j\geq 1$ if and only if 
$a_{1}\neq n-1$ and $a_{2}\leq n-2$.
\end{theorem}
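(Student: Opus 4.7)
The plan is to prove both implications by a careful long-exact-sequence (LES) descent along the reduced expression, using the machinery of Sections 5--7.

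For the ``condition $\Rightarrow$ vanishing'' direction I would first invoke Lemma~\ref{7.3}(2) to identify $H^{j}(Z(w_0,\underline i),T_{(w_0,\underline i)})$ with $H^{j}(Z(u_1,\underline i_1),T_{(u_1,\underline i_1)})$; since $H^{j}=0$ for $j\geq 2$ is already known from \cite{CKP}, the entire task is to show the vanishing of $H^{1}$. Applying the LES at the last reflection $s_n$ of $u_1$ together with Corollary~\ref{B} yields $H^{1}(Z(u_1,\underline i_1),T)\cong H^{1}(Z(u_1',\underline{i}_1'),T)$, and then Lemma~\ref{lemma7.2}(2) drops the trailing short reflections of $[a_k,n-1]$ to identify this with $H^{1}(Z(w_k,\underline j_k),T)$. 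I would then run a downward induction on $r$ from $k$ to $1$: the LES for $w_r=\tau_rs_n$ reads
\[
H^{0}(Z(\tau_r,\underline j_r'),T)\longrightarrow H^{1}(w_r,\alpha_n)\longrightarrow H^{1}(Z(w_r),T)\longrightarrow H^{1}(Z(\tau_r),T)\longrightarrow 0,
\]
and at every weight where $H^{1}(w_r,\alpha_n)$ is non-zero, Lemma~\ref{7.5}(2) (supplemented by Lemma~\ref{7.4}(2) at the top index $r=k$) supplies exactly enough sections of $T_{(\tau_r,\underline j_r')}$ to surject onto it; elsewhere the surjection is automatic. Hence $H^{1}(Z(w_r),T)\cong H^{1}(Z(\tau_r),T)$, and Lemma~\ref{lemma7.2}(2) identifies the latter with $H^{1}(Z(w_{r-1}),T)$. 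The chain terminates at $w_1=[a_1,n]$, where the hypothesis $a_1\neq n-1$ (and in the remaining case $a_1=n$, $a_2\leq n-2$) is what activates Proposition~\ref{H1alphan}(1) and the residual applications of Lemma~\ref{lemma4.9}, forcing $H^{1}(w_1,\alpha_n)=0$ and reducing to $H^{1}(Z(s_{a_1}\cdots s_{n-1}),T)$, which is zero by Corollary~\ref{short} and Lemma~\ref{lemma7.2}.

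For the converse, I would construct an explicit non-zero $T$-stable weight space in $H^{1}(Z(w_0,\underline i),T)$ when the numerical condition fails. If $a_1=n-1$ the cocycle is located at the step $w_1s_n$ (within the first block $\underline i^1$), while if $a_1=n$ and $a_2=n-1$ it is located at the step where the second block meets $s_n$: one uses Proposition~\ref{H0alphan} and Lemma~\ref{H^1(2)} to pick out a short negative root $\mu$ at which $H^{1}(w_r,\alpha_n)_\mu\neq 0$ but for which the dimension count of Lemma~\ref{7.5}(2) breaks down because one of the required ``length $a_{r-2}-a_{r-1}$'' ranges in Lemma~\ref{Weights} or Proposition~\ref{H^1} is empty; the cokernel of the connecting map then survives. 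Finally, surjectivity of the comparison map (Lemma~\ref{T}, together with Lemmas~\ref{lemma7.2} and \ref{7.3}) transports this non-trivial weight back up to $H^{1}(Z(w_0,\underline i),T)$.

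The main obstacle I expect is the boundary analysis at small $r$ (namely $r=1,2$), where Proposition~\ref{H^1} and Lemma~\ref{H1(3)} have exceptional ranges for the indices $t_1,t_2$ and Proposition~\ref{H1alphan}(1) ceases to apply. In particular, one must carefully compare the short-negative-root weights occurring in $H^{1}(w_1,\alpha_n)$ and $H^{1}(w_2,\alpha_n)$ with those occurring in $H^{0}(\tau_r,\alpha_{n-1})$ and $H^{0}(w_{r-1},\alpha_n)$ (via Corollaries~\ref{6.6} and \ref{H1toH0}), and the precise combinatorial fact that drives the iff statement is that the relevant ranges become empty exactly when some $a_j=n-1$; the numerical criterion $a_1\neq n-1$, $a_2\leq n-2$ is then simply the restatement ``$a_j\neq n-1$ for every $j$'' dictated by the strictly decreasing condition $a_1>a_2>\cdots>a_k$.
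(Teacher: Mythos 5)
Your proposal follows essentially the same route as the paper's proof: the same LES descent chain $Z(w_0)\to Z(u_1)\to Z(u_1')\to Z(w_k)\to Z(\tau_k)\to\cdots\to Z(\tau_1)$ driven by Lemma \ref{7.3}, Corollary \ref{B}, Lemma \ref{lemma7.2} and Lemma \ref{7.5} for the vanishing direction, and, for the converse, a nonvanishing weight space at a short truncation transported up by the surjectivity in Lemma \ref{T}. The one slip is that the obstructing weights in the failing cases are the \emph{positive} roots $\alpha_{n-1}+\alpha_n$ (when $a_1=n-1$) and $\alpha_{n-1}$ (when $a_2=n-1$), not negative short roots --- which is precisely why they escape the $R^-_s\setminus(-S)$ pattern of Proposition \ref{H1alphan}(4) and cannot be cancelled by the relevant $H^0$ terms.
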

\begin{proof}
By \cite[Proposition 3.1]{CKP}, we have $H^j(Z(w_0, \underline i), T_{(w_0, \underline i)})=0$ for all $j\geq 2$. So, by Corollary \ref{com1}(2), it is enough to prove the following:

$H^1(Z(w_0, \underline i), T_{(w_0, \underline i)})=0$ for all $j\geq 1$ if and only if $c$ is 
of the form $[a_1, n][a_2, a_{1}-1]\cdots [a_k, a_{k-1}-1]$ with 
$a_{1}\neq n-1$ and $a_{2}\leq n-2$.

Proof of ($\Longrightarrow$): If $a_1=n-1$, then $c=s_{n-1}s_{n}v$ for some $v\in W_{J}, $ where 
$J=S\setminus \{\alpha_n, \alpha_{n-1}\}$. Take $w=s_{n-1}s_n$ and $\underline i'=(n-1, n)$ in LES, then we have
    
 $0\longrightarrow H^0(w, \alpha_n) \longrightarrow H^0(Z(w, \underline i'), 
 T_{(w,\underline i')})\longrightarrow H^0(s_{n-1}, \alpha_{n-1})\longrightarrow
 H^1(w, \alpha_{n})\overset {f}\longrightarrow 
 H^1(Z(w, \underline i'), T_{(w,\underline i')})\longrightarrow  0$.
 
By a simple calculation, we see that $H^1(s_{n-1}s_{n}, \alpha_{n})= \mathbb C_{\alpha_n+\alpha_{n-1}}$ and 
$H^0(s_{n-1}, \alpha_{n-1})_{\alpha_n+\alpha_{n-1}}=0$.
Hence $f$ is a non zero homomorphism. 
Hence $H^1(Z(w, \underline i'), T_{(w,\underline i')})\neq 0$.
By Lemma \ref{T}, the natural homomorphism $$H^1(Z(w_0, \underline i), T_{(w_0,\underline i)})\longrightarrow 
H^1(Z(w, \underline i'), T_{(w,\underline i')})$$ is surjective.
Hence we have
 $$H^1(Z(w_0, \underline i), T_{(w_0,\underline i)})\neq 0.$$
 If $a_2=n-1$, then  $c=s_{n}s_{n-1}v$ for some $v\in W_{J}, $ where $J=S\setminus \{\alpha_n, \alpha_{n-1}\}$.
Let $\underline i''$ be a reduced expression of $c=s_{n}s_{n-1}v$ and $\underline i'''$ be a  
be the reduced expression of $cs_n$ such that $\underline i'''=(i_1', \ldots, i'_n, n)$ with 
$(i_1', \ldots, i'_n)=\underline i''.
$
Since $\langle \alpha_{n}, \alpha_{j} \rangle=0$ for $j\in \{1,2,\ldots, n-2\}$, we have 
  $$H^i(cs_{n}, \alpha_n)=H^i(s_ns_{n-1}s_{n}, \alpha_n) ~ \mbox{for} ~  \geq 0.$$
  By {\it SES}, we see that $H^1(s_ns_{n-1}s_{n}, \alpha_n)=\mathbb C_{\alpha_{n-1}}$.
 By \cite[Proposition 6.3]{CKP}, we have 
 $$H^0(Z(c, \underline i''), T_{(c, \underline i'')})_{\alpha_{n-1}}=0.$$
Hence, by LES, we conclude that $H^1(Z(cs_{n}, \underline i'''), T_{(cs_{n}, \underline i''')})_{\alpha_{n-1}}\neq 0$. 
By Lemma \ref{T}, the natural homomorphism $H^1(Z(w_0, \underline i), T_{(w_0,\underline i)})\longrightarrow 
H^1(Z(cs_{n}, \underline i'''), T_{(cs_{n},\underline i''')})$ is surjective. Hence, we have 
 $$H^1(Z(w_0, \underline i), T_{(w_0,\underline i)})\neq 0.$$

Proof of ($\Longleftarrow$): Assume that $a_{1}\neq n-1$ and $a_{2}\leq n-2$. 
By Lemma \ref{7.3} (2),  the natural homomorphism $$H^1(Z(w_0, \underline{i}),T_{(w_0, \underline i)})\longrightarrow H^1(Z(u_1, \underline{i}_1),T_{(u_1, \underline i_1)})$$
of $B$-modules is an isomorphism.
By LES, we have the following exact sequence of $B$-modules:

 $\cdots \longrightarrow H^0(Z(u_1, \underline{i}_1),T_{(u_1, \underline i_1)})\longrightarrow 
 H^0(Z(u'_1,\underline{i'}_1), T_{(u'_1, \underline{i'}_1)})\overset {h_1}\longrightarrow H^1(u_1, \alpha_n)\longrightarrow 
 H^1(Z(u_1, \underline{i}_1),T_{(u_1, \underline i_1)})\longrightarrow H^1(Z(u'_1,\underline{i'}_1), T_{(u'_1, \underline{i'}_1)})
 \longrightarrow 0.$
 
 By Corollary \ref{B}, we see that the natural homomorphism $h_1: H^0(Z(u'_1,\underline{i'}_1), T_{(u'_1, \underline{i'}_1)})\longrightarrow H^1(u_1, \alpha_n)$ is
 surjecive.
 Therefore, the natural homomorphism
 $$H^1(Z(u_1, \underline{i}_1),T_{(u_1, \underline i_1)})\longrightarrow H^1(Z(u'_1,\underline{i'}_1), T_{(u'_1, \underline{i'}_1)})$$ is an isomorphism.
Hence the natural homomorphism $$H^1(Z(w_0, \underline{i}),T_{(w_0, \underline i)})\longrightarrow  H^1(Z(u'_1,\underline{i'}_1), T_{(u'_1, \underline{i'}_1)})$$
is an isomorphism.
By Lemma \ref{lemma7.2} (2),  the natural homomorphism 
$$H^1(Z(u'_1,\underline{i'}_1), T_{(u'_1, \underline{i'}_1)})\longrightarrow H^1(Z(w_k,\underline{j}_k), T_{(w_k, \underline{j}_k)})$$
is an isomorphism. Therefore, the natural homomorphism
$$H^1(Z(w_0, \underline{i}),T_{(w_0, \underline i)})\longrightarrow H^1(Z(w_k,\underline{j}_k), T_{(w_k, \underline{j}_k)})$$ is an isomorphism.
By LES, we have the following exact sequence of $B$-modules:

$\cdots \longrightarrow H^0(Z(w_k,\underline{j}_k), T_{(w_k, \underline{j}_k)})\longrightarrow
H^0(Z(\tau_{k}, \underline j_k'), T_{(\tau_{k}, \underline j_k')})\overset {h_2}\longrightarrow H^1(w_k, \alpha_n)\longrightarrow  H^1(Z(w_k,\underline{j}_k), T_{(w_k, \underline{j}_k)})\overset {h_3}\longrightarrow
H^1(Z(\tau_{k}, \underline j_k'), T_{(\tau_{k}, \underline j_k')})\longrightarrow 0.$

 By Lemma \ref{7.5}(2), we see that the map $h_2: H^0(Z(\tau_{k}, \underline j_k'), T_{(\tau_{k}, \underline j_k')})\longrightarrow H^1(w_k, \alpha_n)$
 is surjecive. Therefore, the map $h_3: H^1(Z(w_k,\underline{j}_k), T_{(w_k, \underline{j}_k)})\longrightarrow
H^1(Z(\tau_{k}, \underline j_k'), T_{(\tau_{k}, \underline j_k')})$ is an isomorphism.

Thus, the natural map $$H^1(Z(w_0, \underline{i}),T_{(w_0, \underline i)})\longrightarrow H^1(Z(\tau_{k}, \underline j_k'), T_{(\tau_{k}, \underline j_k')})$$
is an isomorphism.
By using Lemma \ref{lemma7.2}(2) and Lemma \ref{7.5}(2) repeatedly, we see that the natural map  
$$H^1(Z(\tau_{k}, \underline j_k'), T_{(\tau_{k}, \underline j_k')})\longrightarrow H^1(Z(\tau_{r}, \underline j_r'), T_{(\tau_{r}, \underline j_r')})$$ is an isomorphism
for all  $1\leq r \leq k-1$.

Note that $\tau_1\in W_{S\setminus \{\alpha_n\}}$. 
By taking $u=id$ and $v=\tau_1$ in Lemma \ref{lemma7.2}(2), we see that  $H^1(Z(\tau_{1}, \underline j_1'), T_{(\tau_{1}, \underline j_1')})=0.$
Therefore, we conclude that $H^1(Z(w_0, \underline{i}),T_{(w_0, \underline i)})=0.$ This completes the proof of the theorem.
 \end{proof}

 \begin{corollary} Let $c$ be a Coxeter element such that $c$ is 
of the form $[a_1, n][a_2, a_{1}-1]\cdots [a_k, a_{k-1}-1]$ with 
$a_{1}\neq n-1$, $a_{2}\leq n-2$ and $a_k=1$. Let $(w_0, \underline i)$ be a reduced expression of $w_0$ in terms of $c$ as in Theorem \ref{Theorem}.
Then, 
 $Z(w_0, \underline i)$ has no deformations. %That is, $Z(w_0, \underline i)$ is rigid.
\end{corollary}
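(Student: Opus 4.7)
My plan is to split the proof into a reduction step, a necessity argument, and a sufficiency argument, assembling the lemmas of Sections 6 and 7.

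First I would reduce the statement to the vanishing of $H^1$. By \cite[Proposition 3.1]{CKP} (already quoted in the introduction) the higher cohomology $H^j$ for $j\geq 2$ of the tangent bundle of any BSDH variety automatically vanishes. Also, by Corollary \ref{com1}(2), $H^1(Z(w_0,\underline i), T_{(w_0,\underline i)})$ only depends on the equivalence class of $\underline i$ modulo commuting relations, so I can freely choose any convenient reduced expression of $c$ inside each block $\underline i^r$. Thus it suffices to prove that $H^1(Z(w_0,\underline i), T_{(w_0,\underline i)})=0$ if and only if $a_1\neq n-1$ and $a_2\leq n-2$.

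For the necessity ($\Rightarrow$), I would show that if either $a_1=n-1$ or $a_2=n-1$, then a suitable initial subword $w$ of $w_0$ already has $H^1(Z(w,\underline i'), T_{(w,\underline i')})\neq 0$, and then invoke Lemma \ref{T} to push non-vanishing up to $w_0$. In the case $a_1=n-1$ take $w=s_{n-1}s_n$ and apply LES; a direct SES computation gives $H^1(s_{n-1}s_n,\alpha_n)=\mathbb C_{\alpha_n+\alpha_{n-1}}$ while $H^0(s_{n-1},\alpha_{n-1})$ has no weight $\alpha_n+\alpha_{n-1}$, forcing the connecting map into $H^1$ to be non-zero. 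In the case $a_2=n-1$ take $w=cs_n$; here $\langle\alpha_n,\alpha_j\rangle=0$ for $j\leq n-2$ lets one collapse $H^i(cs_n,\alpha_n)$ to $H^i(s_ns_{n-1}s_n,\alpha_n)$, and SES gives $H^1(s_ns_{n-1}s_n,\alpha_n)=\mathbb C_{\alpha_{n-1}}$; combined with the vanishing $H^0(Z(c,\underline i''),T_{(c,\underline i'')})_{\alpha_{n-1}}=0$ from \cite[Proposition 6.3]{CKP}, LES again forces a non-zero class in $H^1$.

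For the sufficiency ($\Leftarrow$), I would chain together the isomorphisms already prepared in Section 7. Using Lemma \ref{7.3}(2), $H^1(Z(w_0,\underline i), T_{(w_0,\underline i)})\cong H^1(Z(u_1,\underline i_1),T_{(u_1,\underline i_1)})$. Next LES together with the surjectivity statement of Corollary \ref{B} shows $H^1(Z(u_1,\underline i_1),T_{(u_1,\underline i_1)})\cong H^1(Z(u_1',\underline{i'}_1),T_{(u_1',\underline{i'}_1)})$. Then Lemma \ref{lemma7.2}(2) (applied with $v\in W_J$, $J=S\setminus\{\alpha_n\}$) gives an isomorphism to $H^1(Z(w_k,\underline j_k),T_{(w_k,\underline j_k)})$. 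Now iterate: for each $r$ from $k$ down to $2$, LES combined with the surjectivity from Lemma \ref{7.5}(2) yields $H^1(Z(w_r,\underline j_r),T_{(w_r,\underline j_r)})\cong H^1(Z(\tau_r,\underline j'_r),T_{(\tau_r,\underline j'_r)})$, and Lemma \ref{lemma7.2}(2) further identifies this with $H^1(Z(w_{r-1},\underline j_{r-1}),T_{(w_{r-1},\underline j_{r-1})})$. After $k$ rounds of descent we land at $H^1(Z(\tau_1,\underline j'_1),T_{(\tau_1,\underline j'_1)})$; since $\tau_1\in W_{S\setminus\{\alpha_n\}}$, Lemma \ref{lemma7.2}(2) with $u=\mathrm{id}$ (or a direct application of the simply-laced vanishing for the Levi of type $A$) makes this group vanish.

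The main obstacle is the sufficiency direction: all the hard analytic content is packaged in the surjectivity of the connecting-type maps $H^0(Z(\cdot,\cdot),T_{(\cdot,\cdot)})\twoheadrightarrow H^1(\cdot,\alpha_n)$ provided by Corollary \ref{B} and Lemma \ref{7.5}(2). Those in turn rest on the explicit weight matchings between $H^1(w_r,\alpha_n)$, $H^0(\tau_r,\alpha_{n-1})$, and $H^0(w_{r-1},\alpha_n)$ established in Corollaries \ref{6.6}, \ref{H1toH0}, and \ref{6.10}. Once the chain of isomorphisms is set up, the verification that the hypotheses $a_1\neq n-1$, $a_2\leq n-2$ are exactly what is needed to avoid the obstructions exhibited in the necessity direction is the only conceptual check; everything else is bookkeeping along LES.
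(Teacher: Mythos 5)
Your proposal is, in substance, a re-proof of Theorem \ref{Theorem} rather than a proof of the corollary. The corollary's hypotheses ($a_1\neq n-1$, $a_2\leq n-2$, with $\underline i$ built from $c$ as in the theorem) place you exactly in the situation where Theorem \ref{Theorem} applies, so the vanishing $H^j(Z(w_0,\underline i), T_{(w_0,\underline i)})=0$ for all $j\geq 1$ is available by citation (together with \cite[Proposition 3.1]{CKP} for $j\geq 2$); the paper's own proof of the corollary is two lines for precisely this reason. The necessity and sufficiency analysis you lay out --- the obstruction computations at $s_{n-1}s_n$ and $cs_n$ pushed up via Lemma \ref{T}, and the chain of isomorphisms through Lemma \ref{7.3}, Corollary \ref{B}, Lemma \ref{lemma7.2} and Lemma \ref{7.5} --- faithfully reproduces the paper's proof of the theorem and is correct as far as it goes, but it is redundant here, and the ``if and only if'' you aim for is more than the corollary needs (only one direction is used).

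The genuine gap sits in the step you dispose of in a single clause: ``reduce the statement to the vanishing of $H^1$.'' The conclusion of the corollary is that $Z(w_0,\underline i)$ \emph{has no deformations}, and the implication from $H^1(X,T_X)=0$ to rigidity of a compact complex manifold $X$ is a nontrivial input from deformation theory (Kodaira--Spencer; the paper invokes \cite[p.~272, Proposition 6.2.10]{Huy}). This is the only content of the corollary that is not already contained in Theorem \ref{Theorem}, and it is the one step your proposal neither states precisely nor justifies. To repair the argument, replace the entire necessity/sufficiency discussion by a citation of Theorem \ref{Theorem} and \cite[Proposition 3.1]{CKP} to get $H^i(Z(w_0,\underline i), T_{(w_0,\underline i)})=0$ for all $i>0$, and then explicitly invoke the rigidity criterion to conclude that $Z(w_0,\underline i)$ admits no deformations.
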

\begin{proof}
 By Theorem \ref{Theorem} and by \cite[Proposition 3.1]{CKP}, we have  
 $H^i(Z(w_0, \underline i), T_{(w_0,\underline i)})=0$ for all $i>0$.
Hence, by \cite [p. 272, Proposition 6.2.10]{Huy},
we see that  $Z(w_0, \underline i)$ has no deformations. 
\end{proof}

\end{document}